\newtheorem{theorem}{Theorem}
\newtheorem{lemma}[theorem]{Lemma}
\newtheorem{corollary}[theorem]{Corollary}
\newtheorem{proposition}[theorem]{Proposition}
\newtheorem{definition}[theorem]{Definition}
\newtheorem{remark}[theorem]{\bf Remark}
\begin{document}

\title{Instability of modes in a partially hinged rectangular plate}

\author[Ferreira Jr]{Vanderley Ferreira Jr}
\address{Vanderley Ferreira Jr and Ederson Moreira dos Santos\newline \indent Instituto de Ci{\^e}ncias Matem{\'a}ticas e de Computa\c{c}{\~a}o --- Universidade de S{\~a}o Paulo \newline \indent
Caixa Postal 668, CEP 13560-970 - S\~ao Carlos - SP - Brazil}
\email{vanderley.cn@gmail.com, \ \ ederson@icmc.usp.br}

\author[Gazzola]{Filippo Gazzola}
\address{Filippo Gazzola \newline \indent Dipartimento di Matematica --- Politecnico di Milano \newline
\indent Piazza Leonardo da Vinci 32, 20133 Milano, Italy}
\email{filippo.gazzola@polimi.it}

\author[Moreira dos Santos]{Ederson Moreira dos Santos}

\date{\today}

\subjclass[2010]{35L35; 35Q74; 35B35; 35B40; 34D20}
\keywords{Nonlocal plate equation; Well-posedness; Asymptotic behavior; Stability.}

\begin{abstract}
We consider a thin and narrow rectangular plate where the two short edges are hinged whereas the two long edges are free. This plate aims to represent the deck of a bridge, either a footbridge or a suspension bridge. We study a nonlocal evolution equation modeling the deformation of the plate and we prove existence, uniqueness and asymptotic behavior for the solutions for all initial data in suitable functional spaces. Then we prove results on the stability/instability of {\em simple modes} motivated by a phenomenon which is visible in actual bridges and we complement these theorems with some numerical experiments.
\end{abstract}

\maketitle

\tableofcontents

\section{Introduction}

We consider a thin and narrow rectangular plate where the two short edges are hinged whereas the two long edges are free.
This plate aims to represent the deck of a bridge, either a footbridge or a suspension bridge. In absence of forces, the plate lies flat
horizontally and is represented by the planar domain $\Omega=(0,\pi)\times(-l,l)$ with $0<l\ll\pi$.
The plate is subject to dead and live loads acting orthogonally on $\Omega$: these loads can be either pedestrians, vehicles, or the
vortex shedding due to the wind. The plate is also subject to edge loads, also called buckling loads, that are compressive forces along the edges:
this means that the plate is subject to prestressing.\par
We follow the plate model suggested by Berger \cite{berger}; see also the previous beam model suggested by Woinowsky-Krieger \cite{woinowsky} and,
independently, by Burgreen \cite{burg}. Then, the nonlocal evolution equation modeling the deformation of the plate reads
\begin{empheq}{align}
\label{enlm}
\left\{
\begin{array}{rl}
U_{tt}+\delta U_t + \Delta^2 U - \phi(U) U_{xx}=F  &\textrm{in }\Omega\times(0,T)\\
U = U_{xx}= 0 &\textrm{on }\{0,\pi\}\times[-l,l]\\
U_{yy}+\sigma U_{xx} = U_{yyy}+(2-\sigma)U_{xxy}= 0 &\textrm{on }[0,\pi]\times\{-l,l\}\\
U(x,y, 0) = U_0(x,y), \quad \quad U_t(x,y, 0) = V_0(x,y) &\textrm{in }\Omega
\end{array}
\right.
\end{empheq}
where the nonlinear term $\phi$ is defined by
\begin{empheq}{align}\label{nonlocalterm}
\phi (U) = -P + S \int_\Omega U_x^2\,,
\end{empheq}
and carries a nonlocal effect into the model. Here $S>0$ depends on the elasticity of the material composing the deck, $S\int_\Omega U_x^2$
measures the geometric nonlinearity of the plate due to its stretching, and $P>0$ is the prestressing constant: one has $P>0$ if the plate is
compressed and $P<0$ if the plate is stretched. The constant $\sigma$ is the Poisson ratio: for metals its value lies around $0.3$ while for
concrete it is between $0.1$ and $0.2$. We assume throughout this paper that
\begin{equation}\label{conditionsigma}
0<\sigma<\frac{1}{2}.
\end{equation}
The function $F: \Omega \times [0,T]\rightarrow \mathbb{R}$ represents the vertical load over the deck and may depend on time while $\delta$
is a damping parameter. Finally $U_0$ and $V_0$ are, respectively, the initial position and velocity of the deck.
The boundary conditions on the short edges are named after Navier \cite{navier} and model the fact that the plate is hinged in connection with
the ground; note that $U_{xx}=\Delta U$ on $\{0,\pi\}\times(-l,l)$. The boundary conditions on the long edges model the fact that the plate is free;
they may be derived with an integration by parts as in \cite{mansfield,ventsel}. For a partially hinged plate such as $\Omega$, the buckling load only
acts in the $x$-direction and therefore one obtains the term $\int_\Omega U_x^2$; see \cite{knightly}. We refer to
\cite{2014al-gwaizNATMA,2015ferreroDCDSA,2015gazzolamodeling} for the derivation of \eqref{enlm}, to the recent monograph \cite{2015gazzola}
for the complete updated story, and to \cite{villaggio} for a classical reference on models from elasticity. The behavior of rectangular plates
subject to a variety of boundary conditions is studied in \cite{braess,grunau,gruswe,gruswe2,sweers}.\par
The first purpose of the present paper is to prove existence, uniqueness and asymptotic behavior for the solutions of \eqref{enlm} for all
initial data in suitable functional spaces. We state and discuss these results in Section \ref{section:weel-posedeness} and their proofs are presented in Sections \ref{section:proofEU} and \ref{section:asymptotic}. We will mainly deal with weak solutions, although with little effort one could extend the results
to more regular solutions (including classical solutions) by arguing as in the seminal paper by Ball \cite{ball} for the beam equation.
By separating variables, we show that \eqref{enlm} admits solutions with a finite number of nontrivial Fourier components. This enables us to
define the (nonlinear) {\em simple modes} of \eqref{enlm}: we point out that, contrary to linear equations, the period of a nonlinear mode depends on the amplitude
of oscillation. The simple modes are found by solving a suitable eigenvalue problem for the stationary equation, which is the subject treated in Section \ref{section:eigenvalue}. In this respect, we take
advantage of previous results in \cite{bfg1,2015ferreroDCDSA,2015gazzolamodeling} where the main properties of the eigenfunctions were studied. In particular,
it was shown that the eigenfunctions may be classified in two distinct families: one family contains the so-called longitudinal eigenfunctions which,
approximately, have the shape of $c_m\sin(mx)$, the other family contains the so-called torsional eigenfunctions which, approximately, have the shape
of $c_my\sin(mx)$.\par
In \cite{bfg1}, an attempt to study the stability of the (nonlinear) simple modes for a local equation similar to \eqref{enlm} is performed.
It turns out that local problems do not allow separation of variables and the precise characterization of the simple modes. The results in
\cite{bfg1} show that there is very strong interaction between these modes and that, probably, the local version of the equation \eqref{enlm}
needs to be further investigated. A similar difficulty appears for the nonlinear string equation: for this reason, Cazenave-Weissler
\cite{cazw,1996cazenaveQAM} suggest to deal first in full detail with the stability of the modes in the nonlocal version.
The second purpose of the present paper is precisely to study the stability of the simple modes of \eqref{enlm}. We collect our results on this subject in Section \ref{section:theorems-stability}, we prove them in Section \ref{section:stability} and they are complemented with some numerical experiments in Section \ref{section:numerics}. This study is motivated by
a phenomenon which is visible in actual bridges and we mention that, according to the Federal Report \cite{ammann} (see also \cite{scott}), the main reason for the
Tacoma Narrows Bridge collapse was the sudden transition from longitudinal to torsional oscillations. Several other bridges collapsed for the same reason,
see e.g.\ \cite[Chapter 1]{2015gazzolamodeling} or the introduction in \cite{bfg1}.\par
In his celebrated monograph, Irvine \cite[p.176]{irvine} suggests to
initially ignore damping of both structural and aerodynamic origin in any model for bridges. His purpose is to simplify as much as possible the model
by maintaining its essence, that is, the conceptual design of bridges. Since the origin of the instability is of structural nature (see \cite{2015gazzola}
for a survey of modeling and results), in this paper we follow this suggestion: we analyze in detail how a solution of \eqref{enlm} initially oscillating
in an almost purely longitudinal fashion can suddenly start oscillating in a torsional fashion, even without the interaction of external forces, that is,
when $F\equiv0$. Overall, our results fit qualitatively with the description of the instability appeared during the Tacoma collapse. The interactions of
the deck (plate) with other structural components (cables, hangers, towers), as well as the aerodynamic and damping effects, are fairly important in
actual bridges: we left all of them out of our model. But we expect that if some phenomena arise in our simple plate model, then they should also be
visible in more complex structures and sophisticated models.

\section{Longitudinal and torsional eigenfunctions} \label{section:eigenvalue}

Throughout this paper we deal with the functional spaces
\begin{empheq}{align}
H^{2}_*(\Omega)=\{U\in H^2(\Omega); \ U=0 \ \textrm{on }\{0,\pi\}\times[-l,l]\}\,,
\end{empheq}
\begin{empheq}{align}
H^{1}_*(\Omega) = \{U\in H^1(\Omega); \ U=0\ \textrm{on }\{0,\pi\}\times[-l,l]\}\,,
\end{empheq}
as well as with $\mathcal{H}$, the dual space of $H^{2}_*(\Omega)$.
We use the angle brackets $\langle{\cdot, \cdot}\rangle$ to denote the duality of $\mathcal{H}\times H^{2}_*(\Omega)$, $({\cdot, \cdot})_0$ for the inner product in $L^2(\Omega)$
with $\|{ \cdot }\|_0$ the standard norm in $L^2(\Omega)$, $({U,V})_1= \int_{\Omega} \nabla U \nabla V $ for the inner product in $H^{1}_*(\Omega)$,
$({\cdot \, , \cdot})_2$ for the inner product in $H^{2}_*(\Omega)$ defined by
\begin{empheq}{align}({U,V})_2\!=\!\int_{\Omega}\left( \Delta U \Delta V\!-\!(1\!-\!\sigma)\big(U_{xx}V_{yy}\!+\!U_{yy}V_{xx}\!-\!2U_{xy}V_{xy}
\big) \right), \quad U,V\in H^{2}_*(\Omega)\,.
\end{empheq}
Thanks to assumption \eqref{conditionsigma}, this inner product defines a norm in $H^{2}_*(\Omega)$; see \cite[Lemma 4.1]{2015ferreroDCDSA}.\par
Our first purpose is to define a suitable basis of $H^{2}_*(\Omega)$ and to define what we mean by simple modes of \eqref{enlm}. To this end, we consider
the eigenvalue problem
\begin{equation}\label{eq:eigenvalueH2L2}
\left\{
\begin{array}{rl}
\Delta^2 w = \lambda w& \textrm{in }\Omega \\
w = w_{xx} = 0& \textrm{on }\{0,\pi\}\times[-l,l]\\
w_{yy}+\sigma w_{xx} = 0& \textrm{on }[0,\pi]\times\{-l,l\}\\
 w_{yyy}+(2-\sigma)w_{xxy} = 0& \textrm{on }[0,\pi]\times\{-l,l\}
\end{array}
\right.
\end{equation}
which can be rewritten as $(w,z)_2=\lambda (w,z)_0$ for all $z\in H^{2}_*(\Omega)$. From \cite{bfg1,2015ferreroDCDSA} we know that
the set of eigenvalues of \eqref{eq:eigenvalueH2L2} may be ordered in an increasing sequence of strictly positive numbers diverging
to $+\infty$, any eigenfunction belongs to $C^\infty(\overline\Omega)$, and the set of eigenfunctions of \eqref{eq:eigenvalueH2L2} is a complete
system in $H^2_*(\Omega)$. In fact, for any $m\ge1$ there exists a divergent sequence of eigenvalues $(\lambda_{m,i})$ (as $i\to\infty$) with
corresponding eigenfunctions
\begin{equation}\label{eigenfunction}
w_{mi} (x,y)= \varphi_{m,i}(y)\sin(mx)\, , \ \ m,i\in{\mathbb{N}}\, .
\end{equation}
The eigenfunction $w_{mi}$  has $m$ nodal sets in the $x$ direction while the index $i$ is not related to the number of nodal sets of $w_{mi}$  in
the $y$ direction. The functions $\varphi_{m,i}$ are combinations of hyperbolic and trigonometric sines and cosines, being either even or odd
with respect to $y$.

\begin{definition}[Longitudinal/torsional eigenfunctions]\label{df:longtors}
If $\varphi_{m,i}$ is even we say that the eigenfunction \eqref{eigenfunction} is longitudinal and if $\varphi_{m,i}$ is odd we say that the eigenfunction
\eqref{eigenfunction} is torsional.
\end{definition}

The order between longitudinal and torsional eigenvalues does not follow a simple rule and we will not order
them according to \eqref{eq:eigenvalueH2L2}. We also consider the following buckling problem:
\begin{equation}\label{eq:eigenvalueH2uxintroduction}
\left\{
\begin{array}{rl}
\Delta^2 w + \Lambda w_{xx} = 0& \textrm{in }\Omega\\
w = w_{xx} = 0& \textrm{on }\{0,\pi\}\times[-l,l]\\
w_{yy}+\sigma w_{xx} = 0& \textrm{on }[0,\pi]\times\{-l,l\}\\
w_{yyy}+(2-\sigma)w_{xxy} = 0& \textrm{on }[0,\pi]\times\{-l,l\}\, .
\end{array}
\right.
\end{equation}
We denote the associated eigenvalues by $\Lambda_{m,i}$. We also denote $\Lambda_{1,1}$, the least eigenvalue, by $\Lambda_1$. It is
straightforward that
\begin{empheq}{align}\label{thismeans}
\Delta^2 w_{mi} =-\Lambda_{m,i} (w_{mi} )_{xx}=m^2\Lambda_{m,i} w_{mi} =\lambda_{m,i} w_{mi}\, ,
\end{empheq}
which proves that every eigenfunction $w_{mi}$  of \eqref{eq:eigenvalueH2uxintroduction} is also an eigenfunction
of \eqref{eq:eigenvalueH2L2} and the eigenvalues are related by
\begin{empheq}{align}
\lambda_{m,i} = m^2\Lambda_{m,i}\, ,\qquad\forall\, m,i\in{\mathbb{N}}\, .
\end{empheq}
Therefore, $(w_{mi})$ is a complete orthogonal system of eigenfunctions associated to both the eigenvalue problems \eqref{eq:eigenvalueH2L2}
and \eqref{eq:eigenvalueH2uxintroduction}. In the sequel, we normalize the eigenfunctions so that
\begin{empheq}{align}\label{normltwowmk}
 &\|w_{mi}\|_0^2=1, \quad\| (w_{mi})_x \|_0^2 = m^2, \quad\|w_{mi}\|_2^2=m^2\Lambda_{ m,i}=\lambda_{ m,i}\, .
\end{empheq}

Let us now explain how these eigenfunctions enter in the stability analysis of \eqref{enlm}. We will assume that $\Omega = (0, \pi) \times (-l, l)$, with
\begin{empheq}{align}\label{lsigma}
l=\frac{\pi}{150}\ ,\quad\sigma=0.2\ ,
\end{empheq}
so that the ratio between the longitudinal and transversal lengths is approximately the same as in the original Tacoma Bridge (see \cite{ammann}) and
$\sigma$ is the Poisson ratio of a mixture between concrete and steel. In addition, we will order the eigenvalues $\Lambda_{m,i}$ in an increasing
sequence which will be denoted by $(\Lambda_k)$.

As we have mentioned, there is no simple rule describing the order between longitudinal and torsional eigenvalues. Computations, by the Newton's methods, show that the first 105 eigenvalues of \eqref{eq:eigenvalueH2uxintroduction} are longitudinal and the first 10 are displayed in Table \ref{eigentable} (up to a maximum error of $10^{-2}$).
\begin{table}[h]
\begin{center}
\begin{tabular}{cccccccccc}
\hline
$\Lambda_1$ &$\Lambda_2$ &$\Lambda_3$ &$\Lambda_4$ &$\Lambda_5$ &$\Lambda_6$ &$\Lambda_7$ &$\Lambda_8$ &$\Lambda_9$ &$\Lambda_{10}$\\
\hline
0.96& 3.84& 6.64& 15.36& 24.00 & 34.57& 47.06& 61.48& 77.82& 96.09\\
\hline
\end{tabular}
\caption{First 10 eigenvalues of (\ref{eq:eigenvalueH2uxintroduction}).}\label{eigentable}
\end{center}
\end{table}

Between the 105th eigenvalue and the next longitudinal one, there are at least 10 that are torsional and are listed in Table \ref{eigentable2} (up to a maximum error of $10^{-1}$).

\begin{table}[h]
\begin{center}
\begin{tabular}{cccccccccc}
\hline
$\Lambda_{106}$ &$\Lambda_{107}$ &$\Lambda_{108}$ &$\Lambda_{109}$ &$\Lambda_{110}$ &$\Lambda_{111}$ &$\Lambda_{112}$ &$\Lambda_{113}$ &$\Lambda_{114}$ &$\Lambda_{115}$\\
\hline
10943.6 &  10946.5 &  10951.2 &  10957.8 &  10966.2 &  10976.6 &  10988.8 &
11003.0 &  11019.0 &  11036.9\\
\hline
\end{tabular}
\caption{First 10 torsional eigenvalues of (\ref{eq:eigenvalueH2uxintroduction}).}\label{eigentable2}
\end{center}
\end{table}

The large discrepancies that appear in Tables \ref{eigentable} and \ref{eigentable2} suggest to restrict the attention to the lower eigenvalues. In order to select a reasonable number of low eigenvalues, let us recall what may be seen in actual bridges. A few days prior to the Tacoma Bridge collapse, the project engineer L.R.\ Durkee wrote a letter (see \cite[p.28]{ammann}) describing the oscillations
which were previously observed. He wrote: {\em Altogether, seven different motions have been definitely identified on the main span of the bridge,
and likewise duplicated on the model. These different wave actions consist of motions from the simplest, that of no nodes, to the most complex, that
of seven modes}. Moreover, Farquharson \cite[V-10]{ammann} witnessed the collapse and wrote that {\em the motions, which a moment before had involved a number
of waves (nine or ten) had shifted almost instantly to two}. This means that an instability occurred and changed the motion of the deck from the ninth or
tenth longitudinal mode to the second torsional mode. In fact, Smith-Vincent \cite[p.21]{tac2} state that this shape of torsional oscillations is the only
possible one, see also \cite[Section 1.6]{2015gazzola} for further evidence and more historical facts. Therefore, the relevant eigenvalues corresponding
to oscillations visible in actual bridges, are the longitudinal ones in Table \ref{eigentable} and the torsional one $\Lambda_{107}$ in Table \ref{eigentable2}.
From \cite[p.20]{ammann} we also learn that in the months prior to the collapse {\em the modes of oscillation frequently changed}, which means that some
modes were unstable. In order to study the transition between modes of oscillation from longitudinal to torsional, we complement the instability result given
by Theorem \ref{th:newstability} ii) from Section \ref{section:theorems-stability} with some numerical experiments using the \textit{Scipy library} \cite{scipy}. This analysis is presented in Section \ref{section:numerics} and will be performed with the eigenvalues $\Lambda_3, \Lambda_4, \ldots, \Lambda_{10}, \Lambda_{107}$, which appear enough for a reliable stability analysis. The numerical solutions to our experiments reported in Figure \ref{figure1} precisely show the
instability of some longitudinal modes with a sudden appearance of a torsional oscillation.

\section{Well-posedness and asymptotic behavior} \label{section:weel-posedeness}

Let us first make clear what we mean by solution of \eqref{enlm}.

\begin{definition}[Weak solution]\label{df:weaksolution}
Let $U_0\in H^{2}_*(\Omega)$, $V_0\in L^2(\Omega)$, $F\in \mathcal{C}^0([0,T],L^2(\Omega))$ for $T>0$. A weak solution of \eqref{enlm} is a function
\begin{empheq}{align}
U\in \mathcal{C}^ 0([0,T],H^{2}_*(\Omega))\cap \mathcal{C}^1([0,T],L^2(\Omega))\cap \mathcal{C}^2([0,T],\mathcal{H})
\end{empheq}
such that, $U(0) = U_0$, $U'(0) = V_0$ and
\begin{empheq}{align}\label{weakform}
{\langle U'',V \rangle} + \delta (U',V)_0 + (U,V)_2 +\phi(U) ({U_x, V_x})_0 = (F,V)_0\,,
\end{empheq}
for all $t\in[0,T]$ and all $V\in H^{2}_*(\Omega)$.
\end{definition}

Then we prove existence and uniqueness of a weak solution for \eqref{enlm}.

\begin{theorem}[Existence and uniqueness]\label{exist1}
Given $\delta \in \mathbb{R}$, $S>0$, $P\geq0$, $T>0$, $U_0\in H^{2}_*(\Omega)$, $ V_0\in L^2(\Omega)$ and $F\in \mathcal{C}^0([0,T],L^2(\Omega))$,
there exists a unique weak solution $U$ of \eqref{enlm}. Moreover, it satisfies, for all $t\in[0,T]$,
\begin{empheq}{align}\label{energyid}
\frac12\|{U'}\|_0^2 +  &\frac12\|{U}\|_2^2-\frac P2\|{U_x}\|_0^2+\frac S4\|{U_x}\|_0^4  - \int_0^t ({F ,U'})_0 + \delta\int_0^t \|{U'}\|_0^2  \\
&=\frac12\|{V_0}\|_0^2 +\frac12\|{U_0}\|_2^2 -\frac{P}2\|{(U_0)_ x}\|_0^2 +\frac{S} 4\|{(U_0)_x}\|_0^4\,.
\end{empheq}
\end{theorem}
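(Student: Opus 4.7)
The approach is a Faedo--Galerkin scheme based on the complete orthogonal basis $(w_{mi})$ of $H^2_*(\Omega)$ introduced in Section~\ref{section:eigenvalue}, relabelled as $(w_k)_{k\ge 1}$. Fix $n\ge 1$, set $V_n=\mathrm{span}\{w_1,\dots,w_n\}$, and seek $U_n(t)=\sum_{k=1}^{n}g^n_k(t)\,w_k$ with $U_n(0),\,U_n'(0)$ the $L^2$-projections of $U_0,V_0$ onto $V_n$ and satisfying, for every $k=1,\dots,n$,
\[
(U_n'',w_k)_0 + \delta\,(U_n',w_k)_0 + (U_n,w_k)_2 + \phi(U_n)\,((U_n)_x,(w_k)_x)_0 = (F,w_k)_0.
\]
Since $\phi(U_n)$ depends on $U_n$ only through the smooth scalar functional $\|(U_n)_x\|_0^2$, this is a second-order ODE system in $(g^n_1,\dots,g^n_n)$ with locally Lipschitz right-hand side, and the Cauchy--Lipschitz theorem yields a unique local solution.

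Testing with $V=U_n'\in V_n$ and integrating in time produces the approximate analogue of \eqref{energyid}. The quartic term compensates the negative quadratic term: completing the square gives $-\tfrac{P}{2}a^2+\tfrac{S}{4}a^4\ge -\tfrac{P^2}{4S}$, so the expression on the left of \eqref{energyid} dominates $\tfrac12\|U_n'\|_0^2+\tfrac12\|U_n\|_2^2$ up to an additive constant. Combining this with Cauchy--Schwarz on $(F,U_n')_0$ and a Grönwall argument (which handles the sign-indefinite damping term $\delta\int_0^t\|U_n'\|_0^2$ for any $\delta\in\mathbb{R}$) provides bounds
\[
\|U_n\|_{L^\infty(0,T;H^2_*)} + \|U_n'\|_{L^\infty(0,T;L^2)} + \|U_n''\|_{L^\infty(0,T;\mathcal H)} \le C,
\]
uniform in $n$, where the last bound is read off the equation itself. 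In particular no blow-up occurs and the $U_n$ exist on all of $[0,T]$.

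These bounds allow the extraction of a subsequence with $U_n\rightharpoonup^{*}U$ in $L^\infty(0,T;H^2_*)$, $U_n'\rightharpoonup^{*}U'$ in $L^\infty(0,T;L^2)$, and $U_n''\rightharpoonup^{*}U''$ in $L^\infty(0,T;\mathcal{H})$. The main obstacle is the passage to the limit in the cubic nonlocal term $\phi(U_n)(U_n)_{xx}$, and the whole argument rests on the strong convergence of $(U_n)_x$ in $L^2$. This is supplied by the Aubin--Lions lemma applied via the compact embedding of $H^2_*(\Omega)$ into $H^1_*(\Omega)$: one obtains $U_n\to U$ strongly in $C([0,T];H^1_*)$, hence $\|(U_n)_x\|_0\to\|U_x\|_0$ uniformly in $t\in[0,T]$ and $\phi(U_n)\to\phi(U)$ in $C([0,T])$. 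Taking the $L^2$-inner product with a fixed basis element $w_k$ and passing to the limit then yields \eqref{weakform} for every $V=w_k$, and hence by density for every $V\in H^2_*(\Omega)$. The initial conditions are recovered from the weak temporal continuities. Passing to the limit in the approximate energy identity and exploiting the weak lower semicontinuity of the norms gives the inequality ``$\le$'' in \eqref{energyid}; the opposite inequality, together with the strong continuities $U\in C([0,T];H^2_*)$ and $U'\in C([0,T];L^2)$ required by Definition~\ref{df:weaksolution}, follows from a duality pairing of $U''$ against $U'$ made rigorous by a Steklov-type time regularization, since $U'\notin H^2_*(\Omega)$ in general.

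For uniqueness, given two solutions $U^1,U^2$ with identical data, $W=U^1-U^2$ satisfies in the weak sense a linear equation driven by $\phi(U^1)U^1_{xx}-\phi(U^2)U^2_{xx}$. The $L^\infty(0,T;H^2_*)$-bound common to $U^1,U^2$ makes this right-hand side Lipschitz-controlled in $L^2$ by $W$ in $H^2_*$; testing the equation for $W$ with $W'$ (again via Steklov regularization to bypass the lack of $H^2_*$-regularity of $W'$) leads to
\[
\frac{d}{dt}\bigl(\|W'\|_0^2+\|W\|_2^2\bigr) \le C\bigl(\|W'\|_0^2+\|W\|_2^2\bigr),
\]
and Grönwall forces $W\equiv 0$. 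The essential difficulty throughout is really confined to the interaction between the cubic nonlocal nonlinearity and the low regularity of $U'$: once strong $L^2$-convergence of $(U_n)_x$ is secured, the limit passage in the Galerkin scheme and the energy-type uniqueness estimate proceed along standard lines.
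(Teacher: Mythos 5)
Your proposal is correct in outline and shares the paper's starting point (the same spectral Galerkin scheme, the same energy estimate via the bound $-\tfrac P2 a^2+\tfrac S4 a^4\ge -\tfrac{P^2}{4S}$ and Gr\"onwall), but it diverges from the paper at the key analytic steps. The paper never passes through weak-$*$ limits or Aubin--Lions: it shows (Step 3) that the Galerkin approximations form a Cauchy sequence in $\mathcal{C}^0([0,T],H^{2}_*(\Omega))\cap\mathcal{C}^1([0,T],L^2(\Omega))$, so the strong temporal continuity demanded by Definition \ref{df:weaksolution} and the identity \eqref{energyid} are obtained by passing to the limit directly in the approximate identities; uniqueness is proved first for the linear problem ($S=0$) with the classical test function $V_s(t)=\int_t^s U(\tau)\,d\tau$, the identity \eqref{energyid} for $S>0$ is then deduced by viewing $\phi(U)U_{xx}$ as an $L^2$ forcing term, and nonlinear uniqueness follows from \eqref{energyid} plus interpolation and Gr\"onwall. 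You instead use weak-$*$ compactness plus Aubin--Lions to get $U_n\to U$ in $\mathcal{C}([0,T];H^{1}_*(\Omega))$ (which indeed suffices for the nonlocal term), and you recover the energy equality and the strong continuities by the Lions/Ball-type argument of pairing $U''$ with $U'$ after time regularization, with uniqueness by a direct Gr\"onwall estimate on the difference, again justified by regularization. Both routes are legitimate: the paper's Cauchy-sequence argument buys strong convergence of the approximations and avoids the energy-equality lemma, at the price of the delicate estimates of its Step 3; your route is the more standard compactness machinery and concentrates the delicacy in the regularization step (including the chain-rule identity $\int_0^t\phi(U)(U_{xx},U')_0=-\tfrac S4\|U_x(t)\|_0^4+\tfrac S4\|(U_0)_x\|_0^4+\ldots$, which the paper justifies separately via $Q_kU$ and dominated convergence). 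Two small caveats: the weak-lower-semicontinuity argument for the inequality ``$\le$'' in \eqref{energyid} does not work as stated when $\delta<0$, since $\delta\int_0^t\|U_n'\|_0^2$ then has the wrong sign for lsc --- harmless, because the regularization argument yields the equality outright; and the uniform $\mathcal{H}$-bound on $U_n''$ ``read off the equation'' tacitly uses that the projections onto $\mathrm{span}\{w_1,\dots,w_n\}$ are orthogonal (hence uniformly bounded) in both $L^2(\Omega)$ and $H^{2}_*(\Omega)$, which holds for this spectral basis but deserves explicit mention.
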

\vspace{.25cm}

Theorem {\rm{\ref{exist1}}} may also be proved for negative $P$ with no change in the arguments. In elasticity, this situation corresponds to a plate
that has been stretched rather than compressed, which does not occur in actual bridges.
Equation \eqref{energyid} is physically interpreted as an energy balance where the kinetic energy is
$$
{\mathcal K}(U; t)=\frac12 \|{U'(t)}\|_0^2\,,
$$
the elastic potential energy is
$$
{\mathcal P}(U;t)=\frac12\|{U(t)}\|_2^2-\frac P2\|{U_x(t)}\|_0^2+\frac S4\|{U_x(t)}\|_0^4\,,
$$
the exterior exchange is
$$
{\mathcal F} (U; t)= - \int_0^t (F, U')_0\,,
$$
the frictional loss is
$$
\mathcal{W}(U; t) = \delta \int_0^t \| U'\|_0^2 \,
$$
and the law conservation \eqref{energyid} says that
\[
{\mathcal K}(U; t) + {\mathcal P}(U;t)+ {\mathcal F} (U; t) + \mathcal{W}(U; t)  \quad \text{is constant for all } t \geq0.
\]
In turn, the elastic energy consists in the bending energy $\|{U(t)}\|_2^2/2$ and the stretching energy $-P\|{U_x(t)}\|_0^2/2+S\|{U_x(t)}\|_0^4/4$.
The total mechanical energy ${\mathcal E}(U;t) = {\mathcal K}(U; t) + {\mathcal P}(U;t)$ is the sum of the kinetic and potential energies so that ${\mathcal E}(U;0)$ is the initial energy. Moreover, we see from \eqref{energyid} that
\begin{empheq}{align}\label{physicsenergyid}
{\mathcal E}(U;t)={\mathcal E}(U;0)-\delta\int_0^t \|{U'}\|_0^2+\int_0^t ({F ,U'})_0  \, ,\qquad\forall\, t \in [0,T]\,.
\end{empheq}
In the isolated case with no damping and no load, i.e.\ with $\delta = 0$ and $F= 0$, the mechanical energy is constant. In the unforced case $F=0$,
\eqref{physicsenergyid} shows that the energy is monotonic according to the sign of $\delta$.

\smallbreak
Next we analyze the asymptotic behavior of the solution, as $t \to +\infty$, under the influence of a positive damping ($\delta>0$), when $F$ is
time independent. As we will see, the solution's behaviour is also influenced by the properties of the
stationary problem associated to \eqref{enlm}, namely
\begin{empheq}{align}\label{stationary}
\left\{
\begin{array}{rl}
\Delta^2 U - \phi(U) U_{xx} = F& \textrm{in }\Omega\\
U = U_{xx} = 0& \textrm{on }\{0,\pi\}\times[-l,l]\\
U_{yy}+\sigma U_{xx} = 0& \textrm{on }[0,\pi]\times\{-l,l\} \\
U_{yyy}+(2-\sigma)U_{xxy} = 0& \textrm{on }[0,\pi]\times\{-l,l\}\, .
\end{array}
\right.
\end{empheq}

When the prestressing $P$ is not larger than the least eigenvalue $\Lambda_1$, the energy ${\mathcal P}_{F}$ is convex and problem \eqref{stationary}
has a unique solution; see \cite{2014al-gwaizNATMA,2015ferreroDCDSA}. In this case we have the following result.

\begin{theorem}[Behaviour at $\infty$ with small $P$]\label{thmcoercive}
 Let $\delta>0$, $S>0$,  $U_0\in H^{2}_*(\Omega)$, $V_0\in L^2(\Omega)$ and $F \in L^2(\Omega)$. If $0\le P\le \Lambda_1$,
 then the solution $U$ of \eqref{enlm} is such that $U(t) \rightarrow \overline U$ in $H^{2}_*(\Omega)$, $U'(t) \to 0$ in $L^2(\Omega)$ as $t \rightarrow \infty$, where $\overline U \in H^{2}_*(\Omega)$ is the unique solution of \eqref{stationary}.
 \end{theorem}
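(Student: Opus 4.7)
The plan is a gradient-like Lyapunov analysis. Since $F$ is time-independent, $\int_0^t(F,U')_0\,ds=(F,U(t))_0-(F,U_0)_0$, so the energy identity \eqref{energyid} rewrites as
\begin{empheq}{align}
\tfrac{1}{2}\|U'(t)\|_0^2+J(U(t))+\delta\int_0^t\|U'(s)\|_0^2\,ds=\tfrac{1}{2}\|V_0\|_0^2+J(U_0),
\end{empheq}
where $J(V):=\tfrac{1}{2}\|V\|_2^2-\tfrac{P}{2}\|V_x\|_0^2+\tfrac{S}{4}\|V_x\|_0^4-(F,V)_0$ is the stationary functional, having $\overline U$ as unique global minimiser for $0\le P\le\Lambda_1$. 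The left-hand side is a Lyapunov functional for the flow, whose dissipation is measured by $\delta\|U'\|_0^2$.

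The first step is coercivity of $J$ and the resulting a priori bounds. The variational characterisation of the buckling problem \eqref{eq:eigenvalueH2uxintroduction} yields $\|V\|_2^2\ge\Lambda_1\|V_x\|_0^2$ for every $V\in H^{2}_*(\Omega)$, so $J(V)\ge\tfrac{1}{2}(1-P/\Lambda_1)\|V\|_2^2+\tfrac{S}{4}\|V_x\|_0^4-\|F\|_0\|V\|_0$. For $P<\Lambda_1$ the first term alone delivers coercivity; for $P=\Lambda_1$ one decomposes $V=\alpha w_{1,1}+V^\perp$ in the buckling eigenbasis, using the spectral gap $\Lambda_{2,1}>\Lambda_1$ to control $V^\perp$ and the quartic stretching term to control $\alpha$. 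The Lyapunov identity then furnishes the uniform bounds
\begin{empheq}{align}
\sup_{t\ge0}\|U(t)\|_2<\infty,\qquad \sup_{t\ge0}\|U'(t)\|_0<\infty,\qquad \int_0^\infty\|U'(s)\|_0^2\,ds<\infty.
\end{empheq}

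Next I identify the $\omega$-limit. Using the equation, $U''=F-\delta U'-\Delta^2 U+\phi(U)U_{xx}$ is bounded in $\mathcal{H}$ uniformly in $t$, whence $t\mapsto\|U'(t)\|_0^2$ is uniformly continuous; combined with its $L^1(0,\infty)$-integrability, Barb\u{a}lat's lemma yields $\|U'(t)\|_0\to 0$. For any $t_n\to\infty$, the $H^{2}_*$-bound and the compact embedding $H^{2}_*(\Omega)\hookrightarrow H^{1}_*(\Omega)$ produce, along a subsequence, $U(t_n)\rightharpoonup\overline V$ in $H^{2}_*(\Omega)$ and $U(t_n)\to\overline V$ in $H^{1}_*(\Omega)$. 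Averaging the weak formulation \eqref{weakform} over $[t_n,t_n+1]$ turns the $\langle U'',V\rangle$ contribution into a telescoping difference of $(U',V)_0$ at the endpoints (vanishing by the Barb\u{a}lat step), while the remaining terms pass to the limit thanks to the strong $H^{1}_*$ and weak $H^{2}_*$ convergences (using that $\phi(U(t_n))\to\phi(\overline V)$ by strong $H^{1}_*$ convergence), identifying $\overline V$ as a weak solution of \eqref{stationary}. By the uniqueness for $P\le\Lambda_1$ recalled from \cite{2014al-gwaizNATMA,2015ferreroDCDSA}, $\overline V=\overline U$, so $U(t)\rightharpoonup\overline U$ in $H^{2}_*(\Omega)$ along the whole trajectory.

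The final step is the upgrade to strong $H^{2}_*$-convergence. Subtracting \eqref{stationary} tested with $U(t)-\overline U$ from \eqref{weakform} tested with the same function and collecting the terms that vanish as $t\to\infty$ (thanks to the strong $H^{1}_*$-convergence, $\|U'(t)\|_0\to 0$, and the boundedness of $U$) yields
\begin{empheq}{align}
\|U(t)-\overline U\|_2^2=-\langle U''(t),U(t)-\overline U\rangle+o(1),
\end{empheq}
and rewriting $\langle U'',U-\overline U\rangle=\tfrac{d}{dt}(U',U-\overline U)_0-\|U'\|_0^2$ followed by integration over $[t,t+1]$ shows $\int_t^{t+1}\|U(s)-\overline U\|_2^2\,ds\to 0$. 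Together with the weak $H^{2}_*$-convergence, this integrated smallness forces $\|U(t)\|_2\to\|\overline U\|_2$, so the Hilbert identity $\|U(t)-\overline U\|_2^2=\|U(t)\|_2^2-2(U(t),\overline U)_2+\|\overline U\|_2^2$ delivers strong $H^{2}_*$-convergence, and $U'(t)\to 0$ in $L^2(\Omega)$ was already established. The main obstacle is precisely this last upgrade: the low regularity $U''\in\mathcal{H}$ forbids a direct pointwise passage to the limit in the equation at the times $t_n$, so one has to combine time-averaging (to convert the second-order term into boundary contributions) with the uniqueness of the $\omega$-limit point. The critical case $P=\Lambda_1$ is a secondary delicate point, handled by exploiting the quartic stretching term along the first buckling mode.
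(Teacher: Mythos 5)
Your overall gradient-like strategy is the same as the paper's (uniform bounds from the energy identity \eqref{energyid}, time-averaging to identify the $\omega$-limit, then an upgrade to strong convergence), but two of your key steps do not survive the low regularity of weak solutions. First, the Barb\u{a}lat step: you claim that since $U''$ is bounded in $\mathcal{H}$, the map $t\mapsto\|U'(t)\|_0^2$ is uniformly continuous, hence $\|U'(t)\|_0\to0$. To differentiate (or even estimate increments of) $\|U'(t)\|_0^2$ you would have to pair $U''(t)\in\mathcal{H}$ with $U'(t)$, but $U'(t)$ lies only in $L^2(\Omega)$, not in $H^{2}_*(\Omega)$, so this duality is undefined and no uniform modulus of continuity is available (nor is the corresponding Galerkin bound uniform in $k$). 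What boundedness of $U''$ in $\mathcal{H}$ together with $U'\in L^2(0,\infty;L^2(\Omega))$ does give, applying Barb\u{a}lat to $t\mapsto(U'(t),V)_0$ for a fixed $V\in H^{2}_*(\Omega)$, is only the weak convergence $U'(t)\rightharpoonup0$: enough for your telescoping endpoint terms in the identification of $\overline U$, but not for the assertion $U'(t)\to0$ in $L^2(\Omega)$, which is part of the statement and which you also use later.

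Second, the final upgrade: from $\int_t^{t+1}\|U(s)-\overline U\|_2^2\,ds\to0$ and $U(t)\rightharpoonup\overline U$ you conclude $\|U(t)\|_2\to\|\overline U\|_2$, but weak convergence only gives lower semicontinuity of the norm, and integrated smallness is compatible with persistent excursions of $\|U(t)\|_2$ (below the uniform bound) on time sets of vanishing measure, because no uniform modulus of continuity of $t\mapsto\|U(t)\|_2^2$ is available either --- via \eqref{energyid} it is tied to that of $\|U'(t)\|_0^2$, i.e.\ to the same missing ingredient. The paper closes both gaps with a single Lyapunov argument that your proposal never invokes: strong convergence along well-chosen times $\bar t_n$ gives ${\mathcal E}_{F}(U;\bar t_n)\to{\mathcal P}_{F}(\overline U)$; since ${\mathcal E}_{F}(U;\cdot)$ is nonincreasing, the whole-trajectory energy limit follows; and because for $0\le P\le\Lambda_1$ the unique stationary solution is the \emph{global minimizer} of ${\mathcal P}_{F}$ (condition \eqref{condextra} in Lemma \ref{convergencelemma}), one has ${\mathcal P}_{F}(U;t)\ge{\mathcal P}_{F}(\overline U)$, so the splitting ${\mathcal E}_{F}={\mathcal K}+{\mathcal P}_{F}$ forces ${\mathcal K}(U;t)\to0$ and ${\mathcal P}_{F}(U;t)\to{\mathcal P}_{F}(\overline U)$, which yields exactly the strong convergences you need. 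Note that you use $P\le\Lambda_1$ only for coercivity (where the elementary bound $\frac P2\tau^2-\frac S4\tau^4\le\frac{P^2}{4S}$ makes your eigenvalue splitting at $P=\Lambda_1$ unnecessary) and for uniqueness; the essential use of the hypothesis is the minimality of ${\mathcal P}_{F}(\overline U)$, which is precisely what rules out the oscillations your argument leaves open.
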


Next, we consider the case with absence of the load $F$. In this case, \eqref{physicsenergyid} tells us that the solution moves towards lower energy levels
whenever $\delta>0$. Theorem \ref{oculos} shows that the solution may exhibit different behaviours as $P$ crosses $\Lambda_1$, namely for
$\Lambda_1<P\le\Lambda_2$, where $\Lambda_2$ denotes the second eigenvalue of problem \eqref{eq:eigenvalueH2uxintroduction}. In this range of the
parameter $P$, the eigenfunctions of problem \eqref{eq:eigenvalueH2uxintroduction} come into play. We recall that $U = 0$ and
$U = \pm \lambda_+{}w_1{}$, with $ \lambda_+ =\sqrt{\frac{P- \Lambda_1}{S}}$, are all the stationary solutions of \eqref{enlm};
see \cite[Theorem 7]{2014al-gwaizNATMA}.

\begin{theorem}[Behaviour at $\infty$ with $P$ not so small and negative energy]\label{oculos}
Let $\delta>0$, $S>0$,
 $U_0\in H^{2}_*(\Omega)$, $V_0\in L^2(\Omega)$, $\Lambda_1<P\le\Lambda_2$
and $U$ be the solution of \eqref{enlm} with $F\equiv0$.
If ${\mathcal E}(U; 0) <0$, then $U(t) \to\overline U$ in $H^{2}_*(\Omega)$, $U'(t) \to 0$ in $L^2(\Omega)$ as $t\to\infty$, where
$$
\overline U =\left\{\begin{array}{ll}
\lambda_+ {}w_1{}\quad & \textrm{if} \quad ({U_0,{}w_1{}})_2>0\\
-\lambda_+ {}w_1{} \quad & \textrm{if} \quad ({U_0,{}w_1{}})_2<0\,.
\end{array}\right.$$
\end{theorem}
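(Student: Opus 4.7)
The plan is to execute a LaSalle-invariance argument using the mechanical energy as a strict Lyapunov function, and then to single out the correct limit via a topological sign-preserving invariant that crucially exploits the spectral gap $P\le\Lambda_2$. First, taking $F\equiv 0$ in the energy identity \eqref{physicsenergyid} gives $\mathcal{E}(U;t)=\mathcal{E}(U;0)-\delta\int_0^t\|U'\|_0^2$, so $\mathcal{E}(U;\cdot)$ is non-increasing; a Young inequality bound shows that $\mathcal{P}$ is bounded below on $H^2_*(\Omega)$ with global infimum $\Phi_-:=-(P-\Lambda_1)^2/(4S)$, attained exactly at $\pm\lambda_+ w_1$. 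Consequently $\mathcal{E}(U;t)\searrow E_\infty\in[\Phi_-,\mathcal{E}(U;0))$, $\int_0^\infty\|U'\|_0^2<\infty$, and the trajectory $\{(U(t),U'(t))\}$ is bounded in $H^2_*(\Omega)\times L^2(\Omega)$.

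Next I identify the $\omega$-limit. Given an arbitrary $t_n\to\infty$, the shifts $W_n(\tau):=U(t_n+\tau)$ on $[0,T]$ are uniformly bounded in $C([0,T];H^2_*)\cap C^1([0,T];L^2)$; Aubin--Lions then furnishes a subsequence with $W_n\to W_\infty$ strongly in $C([0,T];H^1_*)$ and weakly in $L^2(0,T;H^2_*)$. Since $\int_{t_n}^{t_n+T}\|U'\|_0^2\to 0$, the limit satisfies $W_\infty'\equiv 0$. Passing to the limit in the weak form \eqref{weakform}---the strong $C([0,T];H^1_*)$-convergence of $W_n$ makes the nonlocal coefficient $\phi(W_n)=-P+S\|W_{n,x}\|_0^2$ converge in $C([0,T])$---yields that $W_\infty\equiv\bar U$ solves \eqref{stationary} with $F=0$, so by \cite[Theorem~7]{2014al-gwaizNATMA} one has $\bar U\in\{0,\pm\lambda_+ w_1\}$.

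The heart of the proof is then a topological dichotomy: the set $\{V\in H^2_*(\Omega):\mathcal{P}(V)<0\}$ has exactly two connected components, distinguished by the sign of the coordinate $(V,w_1)_2$. Decomposing $V=\alpha w_1+V^\perp$ with $(V^\perp,w_1)_2=0$ and using the hypothesis $P\le\Lambda_2$ together with the Rayleigh-type bound $\|V^\perp\|_2^2\ge\Lambda_2\|V^\perp_x\|_0^2$, one obtains
\begin{empheq}{align}
\mathcal{P}(V)\ \ge\ \tfrac{\alpha^2}{2}(\Lambda_1-P)+\tfrac{S}{4}\alpha^4+\tfrac{S}{4}\|V^\perp_x\|_0^4 \, ,
\end{empheq}
which is strictly positive on $\{(V,w_1)_2=0,V\ne 0\}$; so $\{\mathcal{P}<0\}$ avoids the hyperplane $\{(V,w_1)_2=0\}$, and the retraction $V_s:=\alpha w_1+(1-s)V^\perp$ (under which $\mathcal{P}(V_s)$ is monotone non-increasing in $s$) shows that each half-space component is path-connected and contains the corresponding $\pm\lambda_+ w_1$. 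Because $\mathcal{P}(U(t))\le\mathcal{E}(U;t)<0$ for all $t\ge 0$ and $t\mapsto U(t)$ is continuous into $H^2_*$, the trajectory never crosses the hyperplane; hence $\mathrm{sgn}\,(U(t),w_1)_2=\mathrm{sgn}\,(U_0,w_1)_2$ for every $t\ge 0$. Combined with the previous paragraph this forces $\bar U=\lambda_+ w_1$ when $(U_0,w_1)_2>0$ and $\bar U=-\lambda_+ w_1$ when $(U_0,w_1)_2<0$; the possibility $\bar U=0$ is excluded because $\mathcal{P}(0)=0>E_\infty$.

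Since the $\omega$-limit is thus the singleton $\{\bar U\}$, the full trajectory $U(t)$ converges weakly in $H^2_*$ and strongly in $H^1_*$ to $\bar U$. To upgrade to strong $H^2_*$-convergence, I revisit the shift argument and test the weak form against $\eta(\tau)\,W_n(\tau)$ for $\eta\in C_c^\infty(0,T)$; integration by parts in $\tau$, combined with the already established $L^2$-smallness of $W_n'$ and the uniform $H^1_*$-convergence of the lower-order quantities, produces $\int_0^T\eta\|W_n\|_2^2\,d\tau\to\int_0^T\eta\|\bar U\|_2^2\,d\tau$. Together with weak convergence in $L^2(0,T;H^2_*)$ this yields strong convergence there, hence $E_\infty=\Phi_-=\mathcal{P}(\bar U)$, $\|U(t_n)\|_2\to\|\bar U\|_2$ along suitable subsequences, and finally the weak-plus-norm criterion in the Hilbert space $H^2_*$ delivers strong convergence of the entire trajectory. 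The companion statement $U'(t)\to 0$ in $L^2$ follows from the $L^1$-integrability of $\|U'\|_0^2$ by Barbalat's lemma, the required uniform continuity of $t\mapsto\|U'(t)\|_0^2$ coming from the equation and the bounds on $U''$ in $\mathcal{H}$. The main obstacle is the topological step: everything else fits in the standard LaSalle scheme for damped second-order evolution equations (cf.\ Ball \cite{ball}), but the clean two-component description of $\{\mathcal{P}<0\}$ is specific to the nonlocal, partially hinged setting and is what ultimately forces the trajectory into the correct basin.
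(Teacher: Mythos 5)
Your overall strategy is the same as the paper's: monotonicity and boundedness from the energy identity \eqref{physicsenergyid} (the paper's Lemma \ref{uprimetozero}), subsequential convergence to a solution of the stationary problem (the paper's Lemma \ref{convergencelemma} i), obtained there by a hand-picked sequence of times rather than your time-shift/Aubin--Lions packaging, but to the same effect), classification of the stationary solutions for $\Lambda_1<P\le\Lambda_2$ as $\{0,\pm\lambda_+w_1\}$ via Theorem 7 of \cite{2014al-gwaizNATMA}, exclusion of $0$ by negativity of the energy, and selection of the sign by observing that $\mathcal{P}\ge 0$ on $[w_1]^{\perp}$ so that the trajectory never meets that hyperplane and $(U(t),w_1)_2$ keeps the sign of $(U_0,w_1)_2$; the paper phrases this last point through the comparison of $\|U(t)\mp\lambda_+w_1\|_2$, you through the two components of $\{\mathcal{P}<0\}$, which is the same mechanism (compare the Remark following the theorem).

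The genuine weak point is your final paragraph. First, the Barbalat argument for $U'(t)\to0$ is not justified as stated: boundedness of $U''$ in $\mathcal{H}$ gives only that $U'$ is Lipschitz into $\mathcal{H}$ and bounded in $L^2(\Omega)$, and since $U'(t)-U'(s)$ need not belong to $H^{2}_*(\Omega)$ no interpolation produces an $L^2$ modulus of continuity for $t\mapsto\|U'(t)\|_0^2$; at the regularity of weak solutions this uniform continuity is exactly what is missing. Second, your norm convergence $\|U(t_n)\|_2\to\|\overline U\|_2$ is obtained only along a.e.-selected times inside the shifted windows, and weak-plus-norm convergence along such subsequences does not by itself give strong $H^{2}_*(\Omega)$ convergence of the entire trajectory. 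Both gaps are closed by the mechanism you already have in hand and which is precisely the paper's Lemma \ref{convergencelemma} ii): since $\overline U=\pm\lambda_+w_1$ is the global minimizer of $\mathcal{P}$ and $\mathcal{E}(U;\cdot)$ is non-increasing with limit $\mathcal{P}(\overline U)$ (your identity $E_\infty=\Phi_-$), one gets $0\le\mathcal{K}(U;t)\le\mathcal{E}(U;t)-\mathcal{P}(\overline U)\to0$, hence $U'(t)\to0$ in $L^2(\Omega)$ without Barbalat; then $\mathcal{P}(U;t)=\mathcal{E}(U;t)-\mathcal{K}(U;t)\to\mathcal{P}(\overline U)$, which combined with the strong $H^{1}_*(\Omega)$ convergence of the whole trajectory yields $\|U(t)\|_2\to\|\overline U\|_2$ for all $t\to\infty$ and finally strong $H^{2}_*(\Omega)$ convergence; this is exactly condition \eqref{condextra} at work. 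With that repair your argument is correct and essentially coincides with the paper's proof.
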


\begin{remark}
The open set $\mathcal{N}:= \{ U_0\in H^{2}_*(\Omega); \, {\mathcal E}(U_0,0)< 0\}  \subset H^{2}_*(\Omega)$ consists of two path-connected components, one contains $\lambda_+{}w_1{}$, the other contains $-\lambda_+{}w_1{}$ and $\mathcal{N} \cap \{{}w_1{}\}^{\perp} = \emptyset$. Moreover, the origin is the only point in the intersection of the boundary of these components.
\end{remark}

The next result describes the invariance of the solution according to initial data. This turns out to be important, in particular, to prove our stability/instability results of simple modes.

\begin{theorem}[Invariance according to data]\label{thm-vkperp}
Let $w$ be an eigenfunction of \eqref{eq:eigenvalueH2uxintroduction}. If $F \in [w]^{\perp} \subset L^2(\Omega)$, $U_0 \in [w]^{\perp} \subset H^{2}_*(\Omega)$, $V_0 \in [w]^{\perp} \subset L^2(\Omega)$ and  $U$ is the weak solution of \eqref{enlm}, then $U(t) \in [w]^{\perp} \subset H^{2}_*(\Omega)$ for all $t$.
\end{theorem}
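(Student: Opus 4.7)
The natural strategy is to reduce the PDE to a scalar linear homogeneous ODE for the function
\[
g(t) := (U(t), w)_0,
\]
and then invoke uniqueness for linear ODEs together with the zero initial data. To set this up, I first observe that the three orthogonality hypotheses can be unified in the $L^2$ inner product. Indeed, since $w$ is an eigenfunction of \eqref{eq:eigenvalueH2L2} with eigenvalue $\lambda=m^2\Lambda$, one has $(U,w)_2=\lambda(U,w)_0$ for every $U\in H^2_*(\Omega)$; hence $[w]^\perp\subset H^2_*(\Omega)$ (with respect to $(\cdot,\cdot)_2$) coincides with $[w]^\perp\cap H^2_*(\Omega)$ with respect to $(\cdot,\cdot)_0$. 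In particular $(U_0,w)_0=0$, $(V_0,w)_0=0$, $(F,w)_0=0$, and the conclusion $U(t)\in[w]^\perp\subset H^2_*(\Omega)$ is equivalent to $g(t)=0$.

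The second key algebraic fact is that, by \eqref{eigenfunction}, the eigenfunction has the separated form $w(x,y)=\varphi(y)\sin(mx)$, so that $w_{xx}=-m^2w$. Consequently, since $U(t)=0$ on $\{0,\pi\}\times[-l,l]$, integration by parts gives
\[
(U_x(t),w_x)_0=-(U(t),w_{xx})_0=m^2\,g(t),
\]
while the $L^2$ eigenvalue relation gives $(U(t),w)_2=\lambda\,g(t)$.

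With these preparations, the plan is to test the weak formulation \eqref{weakform} with the time-independent $V=w$ for each $t\in[0,T]$. This yields the pointwise identity
\[
\langle U''(t),w\rangle+\delta\,(U'(t),w)_0+\lambda\,g(t)+m^2\phi(U(t))\,g(t)=(F(t),w)_0=0.
\]
Since $U\in\mathcal{C}^1([0,T],L^2(\Omega))$ and $U\in\mathcal{C}^2([0,T],\mathcal{H})$, the function $g$ is of class $\mathcal{C}^2$ with $g'(t)=(U'(t),w)_0$ and $g''(t)=\langle U''(t),w\rangle$; this follows by differentiating the $L^2$ pairing in the first case and by the continuity of the $\mathcal{H}$--$H^2_*$ duality pairing in the second. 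The displayed identity therefore becomes the genuine scalar ODE
\[
g''(t)+\delta\,g'(t)+\bigl[\lambda+m^2\phi(U(t))\bigr]g(t)=0,\qquad t\in[0,T].
\]

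The coefficient $\lambda+m^2\phi(U(t))=m^2\bigl(\Lambda-P+S\|U_x(t)\|_0^2\bigr)$ is continuous in $t$ because $U\in\mathcal{C}^0([0,T],H^2_*(\Omega))$, so this is a second-order linear homogeneous ODE with continuous coefficients. The initial conditions $g(0)=(U_0,w)_0=0$ and $g'(0)=(V_0,w)_0=0$ force $g\equiv 0$ by the Cauchy--Lipschitz uniqueness theorem, which gives $(U(t),w)_2=\lambda g(t)=0$ for all $t\in[0,T]$, as desired. The main obstacle in this plan is of a purely technical nature, namely promoting the $\mathcal{H}$-valued regularity of $U$ to the pointwise $\mathcal{C}^2$ regularity of the scalar $g$; once that is justified by the argument above, the rest is a direct consequence of the eigenfunction relations $w_{xx}=-m^2w$ and $(w,\cdot)_2=\lambda(w,\cdot)_0$.
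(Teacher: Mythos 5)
Your proposal is correct and follows essentially the same route as the paper: project the weak formulation onto the eigenfunction, obtain a linear second-order ODE with continuous coefficient $\lambda+m^2\phi(U(t))$ for the scalar component, and conclude $g\equiv 0$ from the zero initial data by uniqueness. The only cosmetic difference is that you work with $g(t)=(U(t),w)_0$ and the relation $w_{xx}=-m^2w$ from the separated form \eqref{eigenfunction}, whereas the paper phrases the same reduction via the duality pairing and the buckling eigenvalue relation $(U_x,w_x)_0=\tfrac{1}{\Lambda}(U,w)_2$.
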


Theorem \ref{thm-vkperp} says that if the initial data and forcing term have null coordinates in some entries of their Fourier series,
then the solution will also have the corresponding coordinates null.

As a first consequence, we have a convergence result for initial data in $[w_1]^{\perp}$, with positive energy, in the same range of parameters from Theorem \ref{oculos}.
\begin{corollary}[Behaviour at $\infty$ with $P$ not so small and positive energy]\label{th:perpv1}
Let $\delta>0$, $S>0$,  $U_0\in [w_1]^{\perp} \subset H^{2}_*(\Omega)$, $V_0\in [w_1]^{\perp} \subset L^2(\Omega)$, $\Lambda_1<P\le\Lambda_2$ and $U$ be the solution of \eqref{enlm} with $F\equiv0$. Then $U(t) \in [w_1]^{\perp} \subset H^{2}_*(\Omega)$ for all $t \geq 0$ and $U(t) \to 0$ in $H^{2}_*(\Omega)$, $U'(t) \to 0$ in $L^2(\Omega)$ as $t\to\infty$.
\end{corollary}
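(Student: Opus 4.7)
The plan is to reduce the problem to the invariant subspace $[w_1]^\perp$ via Theorem \ref{thm-vkperp}, to exploit the fact that on this subspace the second buckling eigenvalue $\Lambda_2$ plays the role of the first one in the coercivity estimate, and finally to run a LaSalle-type argument in the spirit of Theorem \ref{thmcoercive}. The invariance $U(t)\in[w_1]^\perp$ for every $t\ge 0$ is immediate from Theorem \ref{thm-vkperp} applied with $w=w_1$: by hypothesis $F\equiv 0\in[w_1]^\perp$ and $U_0,V_0\in[w_1]^\perp$.

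For the asymptotics, the key analytic ingredient is the variational characterization
\[
\Lambda_2 \;=\; \min\!\left\{\frac{\|U\|_2^2}{\|U_x\|_0^2}\,:\,U\in[w_1]^\perp\cap H^{2}_*(\Omega),\ U\neq 0\right\},
\]
which on the invariant subspace lets me recast the potential energy as
\[
\mathcal{P}(U;t)\;=\;\tfrac12\bigl(\|U\|_2^2-\Lambda_2\|U_x\|_0^2\bigr)+\tfrac{\Lambda_2-P}{2}\|U_x\|_0^2+\tfrac{S}{4}\|U_x\|_0^4,
\]
a sum of three nonnegative quantities when $\Lambda_1<P\le\Lambda_2$. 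Combined with the energy identity \eqref{physicsenergyid} (with $F\equiv 0$, $\delta>0$) I would conclude that $\mathcal{E}(U;t)$ is nonnegative and nonincreasing --- hence convergent to some $\mathcal{E}_\infty\ge 0$ ---, that $\int_0^\infty\|U'\|_0^2\le\mathcal{E}(U;0)/\delta<\infty$, and that $\|U(t)\|_2$ and $\|U'(t)\|_0$ remain uniformly bounded in $t$.

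The final step is a LaSalle/$\omega$-limit argument. Along any sequence $t_n\to\infty$, the uniform bounds provide (up to a subsequence) a weak $H^{2}_*$-limit $\overline U$, and since $[w_1]^\perp$ is weakly closed one has $\overline U\in[w_1]^\perp$; the summability of $\|U'\|_0^2$ together with passage to the limit in the weak formulation \eqref{weakform} applied to the time-shifted trajectories $U(t_n+\,\cdot\,)$ forces $\overline U$ to solve \eqref{stationary} with $F=0$. The complete list of stationary solutions of \eqref{enlm} in the range $\Lambda_1<P\le\Lambda_2$ is $\{0,\pm\lambda_+w_1\}$, and only $0$ lies in $[w_1]^\perp$; hence $\overline U=0$. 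The main obstacle will be upgrading this weak, subsequential convergence to the announced strong convergence in $H^{2}_*(\Omega)$: I plan to do this by first proving $\mathcal{E}_\infty=0$ (every subsequential limit point carries zero energy), and then using the decomposition above to pass from $\mathcal{P}(U;t)\to 0$ back to $\|U(t)\|_2\to 0$ (via the linear Poincaré term when $P<\Lambda_2$, and via the quartic term $\tfrac{S}{4}\|U_x\|_0^4$ in the borderline case $P=\Lambda_2$, where the linear term degenerates).
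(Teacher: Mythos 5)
Your reduction is the same as the paper's: invariance of $[w_1]^{\perp}$ from Theorem \ref{thm-vkperp}, nonnegativity of the potential energy on $[w_1]^{\perp}$ for $P\le\Lambda_2$ (your $\Lambda_2$-decomposition is exactly the reason the paper can write ${\mathcal P}_{F}(U;t)\ge 0={\mathcal P}_{F}(0)$), and the fact that $0$ is the only stationary solution in $[w_1]^{\perp}$ when $\Lambda_1<P\le\Lambda_2$. The paper then simply verifies hypothesis \eqref{condextra} and invokes Lemma \ref{convergencelemma}~ii); you instead re-derive that lemma's content, and this is where a genuine gap appears.

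The gap is in the step ``first proving $\mathcal{E}_\infty=0$ (every subsequential limit point carries zero energy)''. What your LaSalle argument yields is only \emph{weak} $H^{2}_*$ subsequential convergence $U(t_n)\rightharpoonup \overline U=0$, and $\|\cdot\|_2^2$ is merely weakly lower semicontinuous; the fact that the weak limit has zero energy does not imply $\mathcal{E}(U;t_n)\to 0$. Concretely, nothing you have written excludes the scenario $\mathcal{E}_\infty=c>0$ with $\mathcal{K}(t_n)\to0$, $\|U_x(t_n)\|_0\to0$ (this part is genuine, by the compact embedding $H^2_*\hookrightarrow H^1_*$) but $\|U(t_n)\|_2^2\to 2c$, i.e.\ a loss of compactness along the trajectory; energy monotonicity and weak convergence alone cannot rule it out, so your plan is circular at precisely the point you flag as ``the main obstacle''. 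The missing idea is to go back to the equation: test \eqref{weakform} with $w=U-\overline U$ (here simply $U$) and integrate over intervals $(t_n,t_{n+1})$ on which $\int\|U'\|_0^2\to0$; the terms $\int\langle U'',U\rangle$, $\delta\int(U',U)_0$ and $\phi(U)\|U_x\|_0^2$ are then $o(1)$ (using the uniform bounds, the $L^2$-equicontinuity $\|U(t)-U(t_n)\|_0\le\big(\int_{t_n}^{t_{n+1}}\|U'\|_0^2\big)^{1/2}$ and the strong $H^1_*$ convergence), which forces $\int_{t_n}^{t_{n+1}}\|U\|_2^2\to0$ and hence $\|U(s_n)\|_2\to0$ along some sequence; only then does monotonicity give $\mathcal{E}_\infty=0$, after which your three-term decomposition (with the quartic term covering the borderline case $P=\Lambda_2$) correctly upgrades to $\|U(t)\|_2\to0$ and $\|U'(t)\|_0\to0$ for the full trajectory. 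This testing device is exactly what the paper's Lemma \ref{convergencelemma}~i) carries out in \eqref{idw00}--\eqref{indoprazero77}; if you cite that lemma (as the paper does), your proof collapses to the paper's two lines, otherwise you must include this step explicitly.
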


Similar results are also available for $P>\Lambda_2$. However, the physically meaningful values of prestressing are $P\le\Lambda_2$ since otherwise
the equilibrium positions of the plate may take unreasonable shapes such as ``multiple buckling''.

\section{Stability of the simple modes}\label{section:theorems-stability}

In this section we consider the case where the  problem is isolated, i.e. with no damping and no load. From Theorem \ref{thm-vkperp} we know that
if the initial data ${U}_0, V_0$ have only one nontrivial component in their Fourier expansions, that is,
\begin{empheq}{align}{U}_0= u_{0,m,i}w_{mi}, \quad V_0 = v_{0,m,i}w_{mi},\end{empheq}
for some $m,\, i$, then the solution ${U}$ has the same property and can be written as
\begin{empheq}{align}\label{simplemodeintro}
{U}(x,y,t) =\varphi(t)w_{mi}(x,y)
\end{empheq}
for some $\varphi\in C^2([0,\infty), {\mathbb{R}})$ satisfying $\varphi(0)=u_{0,m,i}$ and $\varphi'(0)=v_{0,m,i}$.
We call \eqref{simplemodeintro} a $(m,i)$-simple mode of oscillation of \eqref{enlm} and the function $\varphi$ is called the coordinate of the
$(m,i)$-simple  mode. One may be skeptic on the possibility of seeing a simple mode on the deck of a bridge; however,
from \cite[p.20]{ammann} we learn that in the months prior to the collapse {\em one principal mode of oscillation prevailed} and that {\em the modes
of oscillation frequently changed}, which means that the motions were ``almost simple modes'' and that some of them were unstable.
We are so led to consider initial data ${U}_0, V_0$ with two nontrivial components in their Fourier expansions. In this case, we have

\begin{proposition}\label{twocomponents}
Assume that $F\equiv 0$, $\delta=0$, and
\begin{empheq}{align}{U}_0= u_{0,m,i}w_{mi}+u_{0,n,k}w_{nk}, \quad V_0 = v_{0,m,i}w_{mi}+v_{0,n,k}w_{nk},\end{empheq}
for some $m,\, n,\, i,\, k\in\mathbb{N}$ with $(m,i)\neq(n,k)$ and some $u_{0,m,i},\, u_{0,n,k},\, v_{0,m,i},\, v_{0,n,k}\in\mathbb{R}$.
Then the solution ${U}$ of \eqref{enlm} can be written as
\begin{empheq}{align}\label{doublemodeintro}
{U}(x,y,t) =\varphi(t)w_{mi}(x,y)+\psi(t)w_{nk}(x,y)\, ,
\end{empheq}
where $\varphi$ and $\psi$ belong to $C^2([0,\infty), {\mathbb{R}})$ and satisfy the following nonlinear system of ODE's:
\begin{equation}\label{systemODE}
\left\{\begin{array}{ll}
\!\!\!\varphi''(t)+m^2(\Lambda_{m,i}\!-\!P)\varphi(t)+Sm^2\big[m^2\varphi(t)^2+n^2\psi(t)^2\big]\varphi(t)=0\vspace{5pt}\\
\!\!\!\psi''(t)+n^2(\Lambda_{n,k}\!-\!P)\psi(t)+Sn^2\big[m^2\varphi(t)^2+n^2\psi(t)^2\big]\psi(t)=0
\end{array}\right.
\end{equation}
with initial data
$$\varphi(0)=u_{0,m,i},\quad \psi(0)=u_{0,n,k},\quad\varphi'(0)=v_{0,m,i},\quad\psi'(0)=v_{0,n,k}\, .$$
\end{proposition}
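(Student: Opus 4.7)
The plan is to use Theorem~\ref{thm-vkperp} to confine $U(t)$ to the two-dimensional subspace $\mathrm{span}\{w_{mi},w_{nk}\}$, and then derive the ODE system by testing the weak formulation against the two basis elements. First I would note that the eigenfunction family $(w_{jl})$ is orthogonal with respect to both the $L^2$-inner product and the $H^2_*$-inner product, since $(w,z)_2=\lambda(w,z)_0$ whenever $w$ is an eigenfunction. Therefore the hypothesis that $U_0,V_0$ are supported on the two modes $w_{mi}, w_{nk}$ yields $U_0\in[w_{jl}]^\perp\subset H^2_*(\Omega)$ and $V_0\in[w_{jl}]^\perp\subset L^2(\Omega)$ simultaneously, for every $(j,l)\notin\{(m,i),(n,k)\}$; and $F\equiv 0$ is trivially in these complements. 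Applying Theorem~\ref{thm-vkperp} to each such $w_{jl}$ gives $U(t)\in[w_{jl}]^\perp\subset H^2_*(\Omega)$ for all $t\ge 0$, and completeness of $(w_{jl})$ in $H^2_*(\Omega)$ forces $U(t)\in\mathrm{span}\{w_{mi},w_{nk}\}$ for every $t$. Setting $\varphi(t)=(U(t),w_{mi})_0$ and $\psi(t)=(U(t),w_{nk})_0$ (using $\|w_{mi}\|_0=\|w_{nk}\|_0=1$) produces the decomposition \eqref{doublemodeintro}, with the correct initial values at $t=0$.

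Next, the regularity transfers to the coordinates: $U\in\mathcal{C}^1([0,T],L^2(\Omega))$ gives $\varphi,\psi\in\mathcal{C}^1$ with $\varphi'(t)=(U'(t),w_{mi})_0$ and analogously for $\psi$, while $U\in\mathcal{C}^2([0,T],\mathcal{H})$ gives $\varphi''(t)=\langle U''(t),w_{mi}\rangle$ continuous (and similarly for $\psi''$). To obtain \eqref{systemODE}, I would test \eqref{weakform} with $\delta=0$, $F\equiv 0$ against $V=w_{mi}$. Using \eqref{normltwowmk} and $L^2$-orthogonality, the only non-immediate ingredient is the cross term $(U_x,(w_{mi})_x)_0$: from \eqref{eigenfunction} one has $(w_{mi})_{xx}=-m^2 w_{mi}$, and integration by parts yields
\begin{empheq}{align}
((w_{mi})_x,(w_{nk})_x)_0=-((w_{mi})_{xx},w_{nk})_0=m^2(w_{mi},w_{nk})_0,
\end{empheq}
which vanishes for $(n,k)\neq(m,i)$ and equals $m^2$ otherwise. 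Hence $(U_x,(w_{mi})_x)_0=m^2\varphi(t)$. Combined with $(U,w_{mi})_2=m^2\Lambda_{m,i}\,\varphi(t)$ and $\phi(U)=-P+S(m^2\varphi(t)^2+n^2\psi(t)^2)$, this gives the first equation of \eqref{systemODE}; testing against $V=w_{nk}$ produces the second.

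The whole argument rests on two ingredients: Theorem~\ref{thm-vkperp} and the elementary cross-term identity displayed above. I do not anticipate any real obstacle; the only point that deserves careful verification is the \emph{double} orthogonality (in $L^2$ and in $H^2_*$) of the eigenfunctions, which is what allows Theorem~\ref{thm-vkperp} to be invoked for $U_0$ and $V_0$ in their respective ambient spaces. As a redundancy check, the $\mathcal{C}^2$-regularity of $\varphi,\psi$ can also be read off the ODE itself, whose right-hand side is polynomial in $(\varphi,\psi)$ and thus continuous in $t$.
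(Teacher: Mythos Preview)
Your proposal is correct and follows essentially the same route as the paper. The paper's one-sentence sketch (``replace \eqref{doublemodeintro} into \eqref{enlm}, multiply by $w_{mi}$ and $w_{nk}$, integrate, use \eqref{thismeans}'') only spells out the derivation of the ODE system and leaves implicit the justification that $U(t)$ actually lies in $\mathrm{span}\{w_{mi},w_{nk}\}$; you make this step explicit via Theorem~\ref{thm-vkperp}, which is exactly what the paper itself does for the single-mode case in the paragraph preceding the proposition.
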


The proof of Proposition \ref{twocomponents} may be obtained by replacing \eqref{doublemodeintro} into \eqref{enlm}, by multiplying the so obtained
equation with $w_{mi}$ and $w_{nk}$, by integrating over $\Omega$, and by using \eqref{thismeans}. One would like to know whether the following
implication holds:
\begin{equation}\label{lyapunov}
|u_{0,n,k}|+|v_{0,n,k}|\ll|u_{0,m,i}|+|v_{0,m,i}|\ \Longrightarrow\ \|\psi\|_\infty\ll\|\varphi\|_\infty\, .
\end{equation}
If this happens, we say that $\varphi$ is stable with respect to $\psi$, otherwise we say that it is unstable.
Hence, the test of stability consists in studying the stability of the system \eqref{systemODE}. Let us make all this more precise, especially because
\eqref{lyapunov} may be difficult to check.\par
The system \eqref{systemODE} is isolated, its energy ${\mathcal E}={\mathcal E}(u_{0,m,i},u_{0,n,k},v_{0,m,i},v_{0,n,k})$ is constant, and it is given by
\begin{equation}\label{energy}
\begin{array}{rcl}
{\mathcal E}\!&=&\!\tfrac{\varphi'^2}{2}\!+\!\tfrac{\psi'^2}{2}\!
+\!m^2(\Lambda_{m,i}\!-\!P)\tfrac{\varphi^2}{2}\!+\!n^2(\Lambda_{n,k}\!-\!P)\tfrac{\psi^2}{2}\!+\!
S\tfrac{(m^2\varphi^2+n^2\psi^2)^2}{4}\vspace{5pt}\\
\!&\equiv&\!\tfrac{v_{0,m,i}^2}{2}\!+\!\tfrac{v_{0,n,k}^2}{2}\!+\!m^2(\Lambda_{m,i}\!-\!P)\tfrac{u_{0,m,i}^2}{2}\!
+\!n^2(\Lambda_{n,k}\!-\!P)\tfrac{u_{0,n,k}^2}{2}+S\tfrac{(m^2u_{0,m,i}^2+n^2u_{0,n,k}^2)^2}{4}\, =:E_0.
\end{array}\end{equation}

Let us make precise what we intend for stability of modes for the isolated problem.

\begin{definition}[Stability]\label{defstab*}
The $(m,i)$-simple  mode $\varphi(t)w_{mi}(x,y)$ is said to be $(n,k)$ linearly stable if $\xi\equiv0$
is a stable solution of the linear Hill equation
\begin{equation}\label{hill}
\xi''+a(t)\xi=0\, ,\qquad a(t)=n^2(\Lambda_{n,k}-P)+Sm^2n^2\varphi(t)^2\quad \forall \, t\, .
\end{equation}
\end{definition}

Since \eqref{hill} is linear, this is equivalent to state that all the solutions of \eqref{hill} are bounded. Since \eqref{systemODE}
is nonlinear, the stability of $\varphi w_{mi}$  depends on the initial conditions and, therefore, on the corresponding energy \eqref{energy}.
On the contrary, the linear instability of $\varphi w_{mi}$  occurs when the trivial solution of \eqref{hill} is unstable: in this case,
if the initial energy is almost all concentrated in $(\varphi(0),\varphi'(0))$, the component $\varphi$ conveys part of its energy to $\psi$ for some $t>0$.

\begin{remark}
The condition \eqref{lyapunov} is usually called Lyapunov stability which is much stronger than the linear stability as characterized by
Definition \ref{defstab*}. In some cases closely related to our problem, these two definitions coincide, see \cite{ghg}. We also point out
that the equation \eqref{hill} may be replaced by its nonlinear counterpart $\xi''+a(t)\xi+Sn^4\xi^3=0$ without altering Definition
\ref{defstab*}, see \cite{ortega}.
\end{remark}

Let $j \in {\mathbb{N}}$ (including 0) and set
\begin{empheq}{align}I_j = \Big( j(2j+1), (j+1)(2j+1) \Big)\, , \quad K_j =\Big( (j+1)(2j+1), (j+1)(2j+3)\Big).\end{empheq}

These intervals, that were found by Cazenave-Weissler \cite{1996cazenaveQAM}, govern the stability of the modes for large energies.
In fact, the following statement holds.

\begin{theorem}[Stability/Instability of simple modes with large energy]\label{th:newstability}
Let $F\equiv 0$, $\delta = 0$, $S>0$, $P< \min\{\Lambda_{m,i},  \Lambda_{n,k}\}$, $m,n,i,k\in {\mathbb{N}}$, and set  $\gamma = \frac{n^2}{m^2}$.
\begin{enumerate}[i)]
  \item  If $\gamma \in I_j$ for some $j\in{\mathbb{N}} \cup\{0\}$, then any $(m,i)$-simple  mode with sufficiently large energy $E_0$ is $(n,k)$ linearly stable.

 \item  If $\gamma \in K_j$ for some $j\in{\mathbb{N}} \cup\{0\}$, then any $(m,i)$-simple  mode with sufficiently large energy $E_0$ is $(n,k)$ linearly unstable.
\end{enumerate}
\end{theorem}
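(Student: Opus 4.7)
The plan is to reduce Theorem \ref{th:newstability} to the large-energy regime of a universal Lamé-type equation already analyzed by Cazenave and Weissler \cite{1996cazenaveQAM}. Setting $\psi\equiv 0$ in \eqref{systemODE} shows that the coordinate $\varphi$ of the $(m,i)$-simple mode solves the Duffing-type equation
\begin{equation*}
\varphi''+m^2(\Lambda_{m,i}-P)\varphi+Sm^4\varphi^3=0,
\end{equation*}
whose solutions are periodic with maximum amplitude $\alpha=\|\varphi\|_\infty$, and $\alpha\to\infty$ iff $E_0\to\infty$.

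\textbf{Rescaling.} I would set $\omega=\sqrt{S}\,m^2\alpha$ and introduce $u(\tau)=\varphi(\tau/\omega)/\alpha$, $v(\tau)=\xi(\tau/\omega)$. Then $u$ has amplitude $1$ and satisfies
\begin{equation*}
u''+\epsilon u+u^3=0,\qquad \epsilon:=\frac{\Lambda_{m,i}-P}{Sm^2\alpha^2}\,,
\end{equation*}
while the Hill equation \eqref{hill} becomes
\begin{equation*}
v''+(\mu+\gamma u^2)v=0,\qquad \mu:=\frac{n^2(\Lambda_{n,k}-P)}{Sm^4\alpha^2}\,.
\end{equation*}
The hypothesis $P<\min\{\Lambda_{m,i},\Lambda_{n,k}\}$ gives $\epsilon,\mu>0$, and both tend to $0$ as $E_0\to\infty$. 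Since linear (in)stability is invariant under time rescaling, the question becomes a Floquet analysis of this family of Hill equations at the degenerate parameter $(\epsilon,\mu)=(0,0)$.

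\textbf{Limit equation and Floquet continuation.} At $(\epsilon,\mu)=(0,0)$, the coordinate $u_0$ solves $u_0''+u_0^3=0$, $u_0(0)=1$, $u_0'(0)=0$, and is a Jacobi-cn function of period $T_0$. The corresponding variational equation is the Lamé-type equation $v_0''+\gamma u_0(\tau)^2 v_0=0$ studied in \cite{1996cazenaveQAM}, whose monodromy matrix $M_0$ satisfies $|\operatorname{tr} M_0|<2$ for $\gamma\in I_j$ and $|\operatorname{tr} M_0|>2$ for $\gamma\in K_j$. For $\epsilon>0$ small, the periodic orbit $u_\epsilon$ with $u_\epsilon(0)=1$, $u_\epsilon'(0)=0$ survives on a smooth closed curve in the phase plane, with period $T_\epsilon$ depending continuously (in fact analytically) on $\epsilon$, and $u_\epsilon\to u_0$ uniformly on $[0,T_0]$. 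Continuous dependence of solutions of ODEs on parameters and initial data then yields continuity of $(\epsilon,\mu)\mapsto M(\epsilon,\mu)$, the monodromy matrix over $T_\epsilon$. In particular $\operatorname{tr} M(\epsilon,\mu)\to\operatorname{tr} M_0$ as $(\epsilon,\mu)\to(0,0)$. For case ii), large $E_0$ makes $|\operatorname{tr} M(\epsilon,\mu)|>2$, producing a Floquet multiplier of modulus greater than $1$ and hence an unbounded solution of \eqref{hill}. For case i), largeness of $E_0$ keeps $|\operatorname{tr} M(\epsilon,\mu)|<2$; since the Hill equation is Hamiltonian (unit determinant of the monodromy), the two multipliers are a complex conjugate pair on the unit circle and all solutions of \eqref{hill} are bounded.

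\textbf{Expected obstacle.} The main technical step is making the perturbation argument precise, since the period $T_\epsilon$ itself moves with $\epsilon$ and the monodromy matrix is computed over this moving period. One must check that $T_\epsilon$ and the fundamental solutions of the rescaled Hill equation depend continuously on $(\epsilon,\mu)$ in the $C^1$ sense on $[0,T_\epsilon]$, and that stability of \eqref{hill} in the original time is equivalent to stability in the rescaled time. Once these routine but careful verifications are in place, the openness of the intervals $I_j$ and $K_j$ together with the strict inequalities $|\operatorname{tr} M_0|\neq 2$ provided by \cite{1996cazenaveQAM} give the conclusion by continuity.
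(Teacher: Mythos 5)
Your proposal is correct and follows essentially the same route as the paper: both rescale the simple mode so that large energy corresponds to a small parameter, identify the limit system $\vartheta''+\vartheta^3=0$, $\zeta''+\gamma\vartheta^2\zeta=0$ of Cazenave--Weissler, and conclude by regular perturbation of the Floquet data, using their Theorem 3.1 to separate the cases $\gamma\in I_j$ and $\gamma\in K_j$. The only differences are cosmetic (you normalize the amplitude and use the trace of the monodromy matrix, while the paper normalizes the velocity at a zero of $w$ and tracks the eigenvalues of the corresponding monodromy operator $L=B_{\gamma,\varepsilon}$), so no further changes are needed.
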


\begin{remark}
As a consequence, we infer that any $(m,i)$-simple  mode with sufficiently large energy is $(n,k)$ linearly unstable for some $n,k\in{\mathbb{N}}$. Indeed, given $m,i\in{\mathbb{N}}$
we can choose $n =3m$ so that $\gamma\in K_1$.
\end{remark}

Next, we present a stability result in the case of small energy.

\begin{theorem}[Stability of simple modes with small energy]\label{stable}
Let $F\equiv 0$, $\delta = 0$, $S>0$, $P< \min\{\Lambda_{m,i},  \Lambda_{n,k}\}$, $m,n,i,k\in {\mathbb{N}}$. If
\begin{equation}\label{nm}
\frac{n}{m}\sqrt{\frac{\Lambda_{n,k}-P}{\Lambda_{m,i}-P}}\not\in\mathbb{N}^+\qquad\mbox{or}\qquad
\frac{n}{m}\sqrt{\frac{\Lambda_{n,k}-P}{\Lambda_{m,i}-P}}\in\mathbb{N}^+\quad\mbox{and}\quad 4n^2<3m^2\,,
\end{equation}
then a $(m,i)$-simple mode with sufficiently small energy $E_0$ is $(n,k)$ linearly stable.
\end{theorem}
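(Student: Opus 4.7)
My plan is to analyze the Hill equation \eqref{hill} as the amplitude $A$ of the $(m,i)$-simple mode tends to zero. The coordinate $\varphi$ solves the scalar Duffing equation
\[
\varphi'' + m^2(\Lambda_{m,i}-P)\varphi + Sm^4\varphi^3 = 0
\]
(the $\psi\equiv 0$ specialisation of \eqref{systemODE}), so for $E_0\to 0$ it is a $T_A$-periodic oscillation of amplitude $A=O(\sqrt{E_0})$ with $T_A\to T_0:=2\pi/(m\sqrt{D})$, where I write $D:=\Lambda_{m,i}-P$ and $D':=\Lambda_{n,k}-P$. Since the Duffing potential is even, $\varphi(t+T_A/2)=-\varphi(t)$, hence $\varphi^2$ --- and therefore the coefficient $a(t)$ of \eqref{hill} --- is in fact $T_A/2$-periodic. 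By Floquet theory, linear stability in the sense of Definition \ref{defstab*} reduces to $|\mathrm{tr}\, M_A|<2$, where $M_A$ denotes the monodromy of \eqref{hill} over one period $T_A/2$.

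In the non-resonant case (first alternative in \eqref{nm}), at $A=0$ the Hill equation is the constant-coefficient $\xi''+n^2D'\xi=0$, whose monodromy over $T_0/2=\pi/(m\sqrt{D})$ has trace $\mathrm{tr}\, M_0=2\cos\bigl(\pi(n/m)\sqrt{D'/D}\bigr)$, lying strictly in $(-2,2)$ by hypothesis. Continuous dependence of the monodromy on the coefficient and on the period then gives $|\mathrm{tr}\, M_A|<2$ for all sufficiently small $A$.

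In the resonant case $(n/m)\sqrt{D'/D}=j\in\mathbb{N}^+$ one has $\mathrm{tr}\, M_0=2(-1)^j$, and a second-order expansion in $A$ is needed. A Lindstedt--Poincar\'e ansatz gives $\varphi(t)=A\cos(\omega_A t)+O(A^3)$ with $\omega_A^2=m^2 D + (3/4)Sm^4 A^2+O(A^4)$, whence $\varphi^2=(A^2/2)(1+\cos 2\omega_A t)+O(A^4)$; rescaling $\tau=2\omega_A t$ transforms \eqref{hill} into a Mathieu equation $\xi''(\tau)+(\mu(A)+\eta(A)\cos\tau)\xi=0$ with
\[
\mu(A)=\frac{j^2}{4}+\frac{S(2n^2-3j^2m^2)}{16D}\,A^2+O(A^4),\qquad \eta(A)=\frac{Sn^2}{8D}\,A^2+O(A^4).
\]
Invoking the classical asymptotics of the $j$-th Mathieu instability tongue --- for $j=1$ it is $|\mu-1/4|<\eta/2+O(\eta^2)$, and for $j\geq 2$ it has width $O(\eta^j)=O(A^{2j})$ --- the hypothesis $4n^2<3m^2$ yields $3j^2m^2-2n^2>n^2$ for every $j\geq 1$ (for $j=1$ this reads $3m^2-2n^2>2n^2>n^2$, and for $j\geq 2$ it follows trivially from $3j^2m^2\geq 12m^2$). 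Consequently, the $A^2$-drift of $\mu$ away from $j^2/4$ places $\mu(A)$ strictly outside the $j$-th tongue for small $A>0$: for $j=1$ the drift exceeds the half-width $\eta/2$, while for $j\geq 2$ the $O(A^2)$ drift already dominates the narrower $O(A^{2j})$ tongue. In every case $|\mathrm{tr}\, M_A|<2$, and linear stability follows.

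\emph{The main obstacle} is the first resonance $j=1$, where $\mu(A)-1/4$ and $\eta(A)$ are of the same order $A^2$: stability then becomes a quantitative comparison of their coefficients rather than a mere continuity argument, and the condition $4n^2<3m^2$ is tailored precisely to produce the decisive inequality $3m^2-2n^2>n^2$ with room to spare. The higher resonances $j\geq 2$ are technically softer, since the tongue width decays as $A^{2j}$ while the drift of $\mu$ stays of order $A^2$.
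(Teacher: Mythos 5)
Your proposal is correct in substance but follows a genuinely different route from the paper, which never passes through Mathieu tongues: the paper rescales to the normalized Duffing system, derives explicit small-energy expansions of the amplitude $\Theta_-$ and of the period $T(E)$ of the simple mode, bounds the Hill coefficient by $\gamma\nu\le a(t)\le\gamma(\nu+\tfrac2\mu E)+o(E)$, and then applies Zhukovskii's stability criterion, which asserts stability of $\xi\equiv0$ as soon as $a(t)$ is pinched between $h^2$ and $(h+1)^2$ times the square of $2\pi/T(E)$; the non-resonant alternative in \eqref{nm} makes the pinching automatic for small $E$, while in the resonant case the right-hand pinching holds precisely because the frequency drift $\tfrac{3E}{2\mu^2}$ beats the growth of $a$, which is where $4\gamma<3(h+1)^2$, hence $4n^2<3m^2$, enters. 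Your treatment of the non-resonant case by continuity of the Floquet discriminant (trace strictly inside $(-2,2)$ at the limit $A=0$) is an equally valid and arguably more standard replacement for Zhukovskii's criterion there. In the resonant case your Lindstedt--Poincar\'e computation is correct ($\omega_A^2=m^2D+\tfrac34Sm^4A^2+O(A^4)$, and the stated $\mu(A)$, $\eta(A)$ check out), and the comparison ``drift $\tfrac{S(3j^2m^2-2n^2)}{16D}A^2$ versus half-width $\tfrac{Sn^2}{16D}A^2$ for $j=1$, versus $O(A^{2j})$ for $j\ge2$'' is exactly the quantitative content that the paper encodes through \eqref{doppiaa}; in fact your analysis shows the hypothesis is used with slack (for $j=1$ you only need $n<m$ at leading order, mirroring the paper's remark that only $4\gamma<3(h+1)^2$ is needed). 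The one point you should tighten to make this fully rigorous is that after the rescaling the equation is not exactly a Mathieu equation: the coefficient is $\mu(A)+\eta(A)\cos\tau$ only up to an $O(A^4)$ periodic remainder (higher harmonics of $\varphi^2$), so the classical Mathieu tongue boundaries cannot be quoted verbatim; you must either expand the Floquet discriminant of the actual Hill equation to order $A^2$ or invoke the general theory of instability zones for Hill equations with small periodic coefficient (e.g.\ Yakubovich--Starzhinskii). Since all your leading-order inequalities are strict, this is a routine repair rather than a gap, and each approach has its merit: the paper's argument avoids any normal-form computation by using a robust pinching criterion, while yours localizes the resonance mechanism and explains exactly which tongue is being avoided.
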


Clearly, \eqref{nm} occur with probability 1 among all possible random choices of the parameters involved. Moreover, as we shall see in next section,
it is certainly satisfied in all the problems of physical interest.

\section{Some numerics showing the instability of modes}\label{section:numerics}

In this section we assume that $\Omega = (0, \pi) \times (-l, l)$, with \eqref{lsigma}, so that the ratio between the longitudinal and transversal lengths is approximately the same as in the original Tacoma Bridge. We also take $P = \frac{\Lambda_{1,1}}{2} = 0.48$ and $S=3$ and we will complement the analysis from last section with some numerical experiments.

The second torsional eigenvalue of \eqref{eq:eigenvalueH2uxintroduction}, namely $\Lambda_{107}$ from Table \ref{eigentable2}, is of special interest because it corresponds to the torsional mode observed just before the Tacoma Bridge collapse. Using Theorem \ref{th:newstability} and some numerical results we indicate how instability occurs and how an almost purely longitudinal oscillation can suddenly start oscillating in a torsional fashion. For that we will perturb longitudinal simple modes associated to the eigenvalue $\Lambda_3, \Lambda_4, \ldots, \Lambda_{10}$ from Table \ref{eigentable} by a torsional simple mode associated to the second torsional eigenvalue $\Lambda_{107}$ from Table \ref{eigentable2}. This analysis is summarized as:
\begin{enumerate}[i)]
\item If the energy is sufficiently small, then Theorem \ref{stable} guarantees stability for these longitudinal simple modes under perturbation by second torsional simples modes; note that $4n^2<3m^2$ holds since $n=2$ and $m=3, 4, \ldots, 10$.
\item If the energy is sufficiently large, then Theorem \ref{th:newstability} i) guarantees stability for these longitudinal simple modes under perturbation by second torsional simples modes. Observe that $\gamma = 4/m^2<1$ ($m =3, 4, \ldots, 10$) and so $\gamma \in I_0$.
\item Hence, if instability occurs, this necessarily happens for some intermediate value of energy. Indeed, we have observed such phenomena in some numerical experiments and the range of energy and corresponding initial data  are presented in Table \ref{results1}. This table considers system \eqref{systemwz}-\eqref{hill-nova} below, which is equivalent to system \eqref{systemODE}, with initial data
\begin{equation}\label{initialdatapert}
\varphi(0) = u_0, \ \varphi'(0)= 0, \ \psi(0) = \frac{u_0}{1000}, \ \psi'(0)=0,
\end{equation}
and the value $\psi(0) = \frac{u_0}{1000}$ is intended to represent a small perturbation by the second torsional simple mode ($\varphi$ corresponds to longitudinal and $\psi$ to torsional). The shooting interval corresponds to the range of the parameter $u_0$ while the energy interval is the corresponding range for the energy associated to \eqref{systemwz}-\eqref{hill-nova}, which is given by \eqref{energy} divided by $S$.
\end{enumerate}

\begin{table}[h!]
\centering
\begin{tabular}{ccccc}
& $\Lambda_3$ & $\Lambda_4$& $\Lambda_5$& $\Lambda_6$ \\ \hline \hline
\begin{tabular}{@{}c@{}}shooting \\ interval\end{tabular}
&(50, 64)&(37, 41)&(29, 30.83)&(23.7, 24.87) \\ \hline
\begin{tabular}{@{}c@{}}energy \\ interval\end{tabular}
&(1.57*10$^6$, 4.20*10$^6$)&(473634, 712694)
&(181766, 231447)&(83660.9, 100912) \\ \hline \\
& $\Lambda_7$ & $\Lambda_8$& $\Lambda_9$& $\Lambda_{10}$ \\ \hline \hline
\begin{tabular}{@{}c@{}}shooting \\ interval\end{tabular}&(20.1, 20.66)&(17.15, 17.521)&(14.74, 14, 98)&(12.63, 12.79) \\ \hline
\begin{tabular}{@{}c@{}}energy \\ interval\end{tabular}
&(45510.7, 50517.7)&(26112.2,  28241.4)&(16002.2, 16927.7)
&(10174.3, 10600) \\ \hline
\end{tabular}
\caption{Intervals of instability: longitudinal perturbed by torsional}
\label{results1}
\end{table}

Table \ref{results1} deserves several comments. As already recalled at the end of Section \ref{section:eigenvalue}, the following facts were observed
at the Tacoma Bridge:\par
$\bullet$ prior to the day of the collapse, the deck was seen to oscillate only on the longitudinal modes from the first to the seventh;\par
$\bullet$ the day of the collapse, the deck was oscillating on the ninth or tenth longitudinal mode;\par
$\bullet$ all the oscillations were unstable since the modes of oscillations frequently changed.\par
Moreover, according to Eldridge \cite[V-3]{ammann} (another witness), on the day of the collapse {\em the bridge appeared to be behaving in the
customary manner} and the motions {\em were considerably less than had occurred many times before}. Table \ref{results1} explains why torsional
oscillations did not appear earlier at the Tacoma Bridge even in presence of wider longitudinal oscillations: the critical threshold of amplitude
(the lower bound of the shooting interval) of the longitudinal modes up to the seventh are larger than the thresholds of the ninth and tenth modes.
Although our model and results do not take into account all the mechanical parameters nor yield quantitative measures, we believe that, at least
qualitatively, they give an idea why the Tacoma Bridge collapsed when the longitudinal oscillations displayed {\em nine or ten waves}.

\begin{figure}[h!]
\includegraphics[width=1\textwidth, height=1.5in]{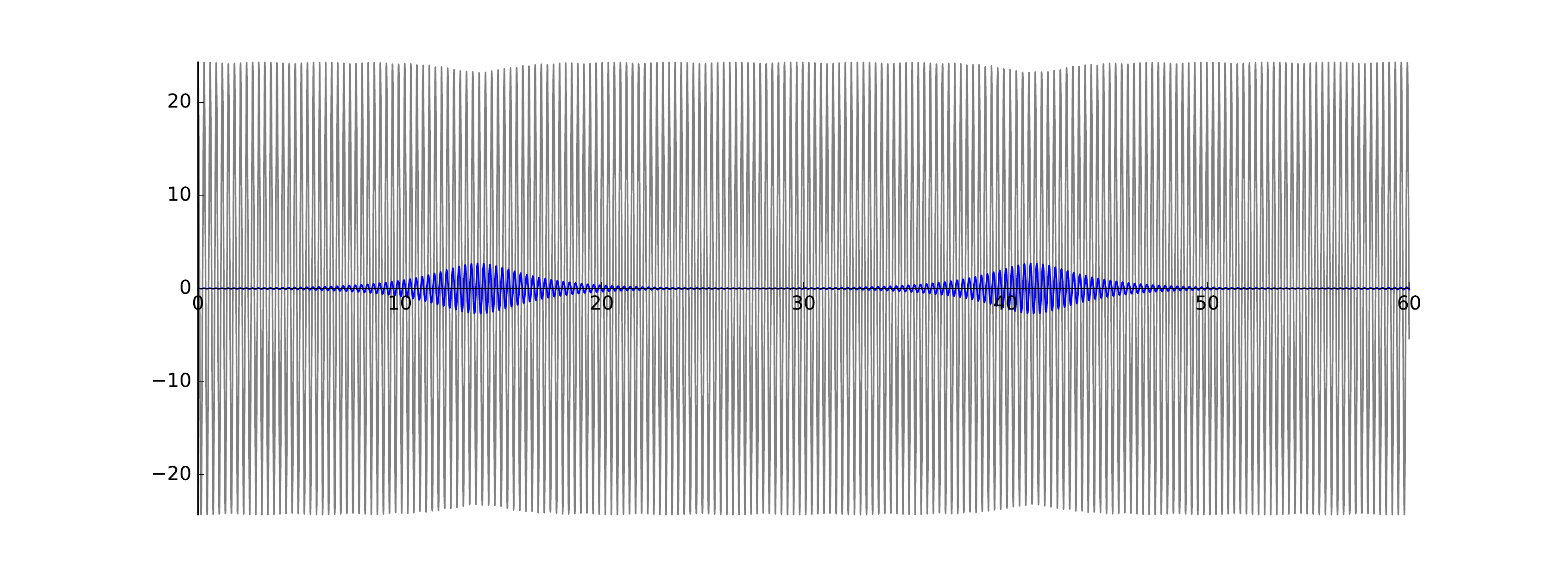}\\
\,\hfill (a) $m=6$, $a=24.3$, $T = 60$\hfill{}\, \\
\includegraphics[width=1\textwidth, height=1.5in]{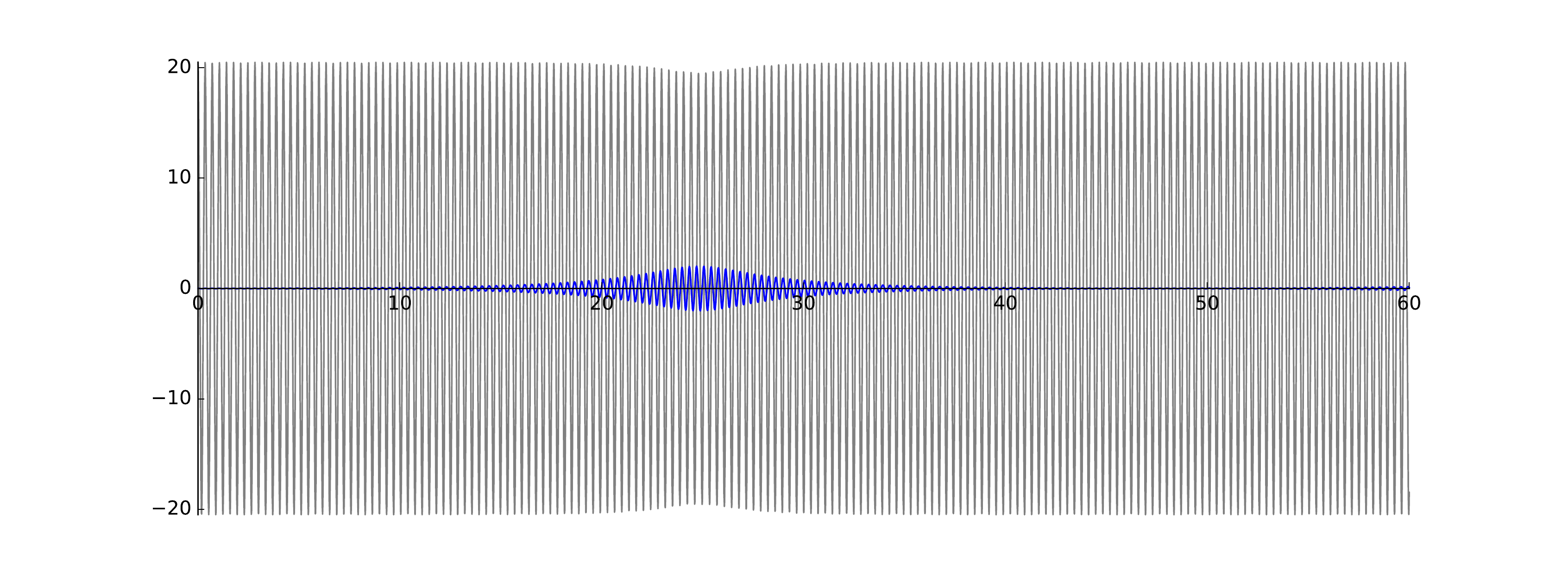}\\
\,\hfill (b) $m=7$, $a=20.5$, $T = 60$\hfill{}\, \\
\includegraphics[width=1\textwidth, height=1.5in]{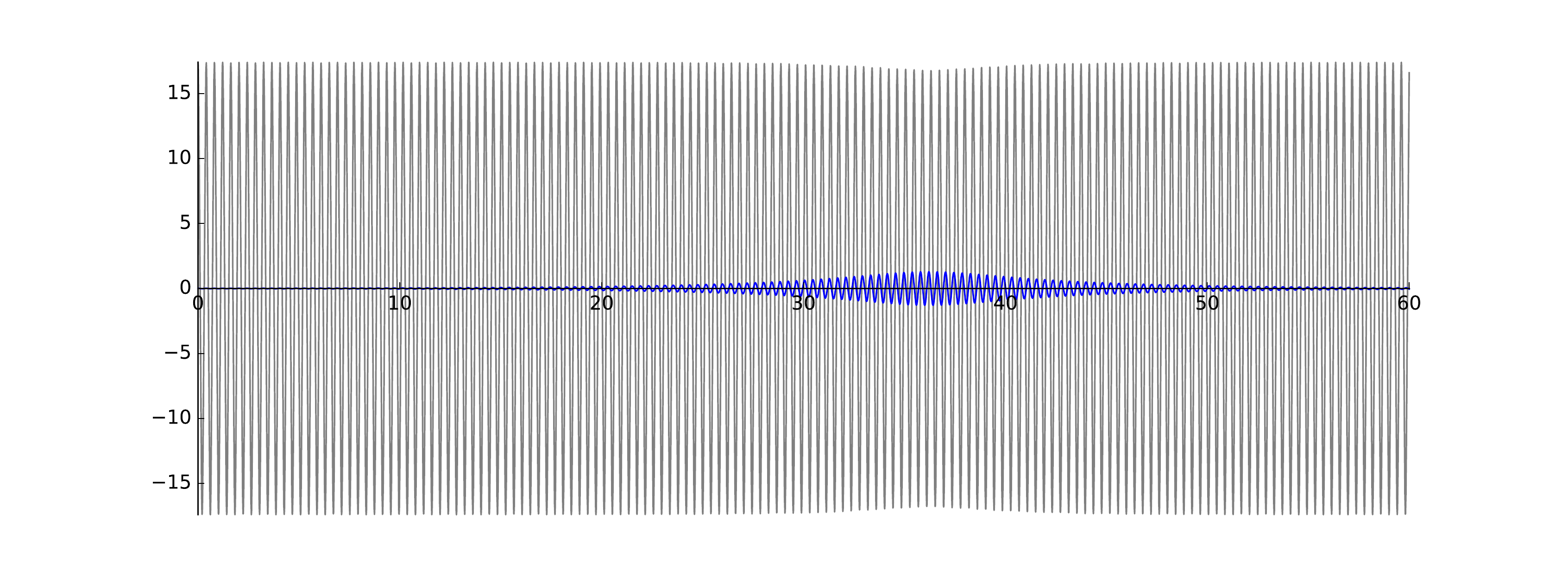}\\
\,\hfill (c) $m=8$, $a=17.42$, $T = 60$\hfill{}\, \\
\includegraphics[width=1\textwidth, height=1.5in]{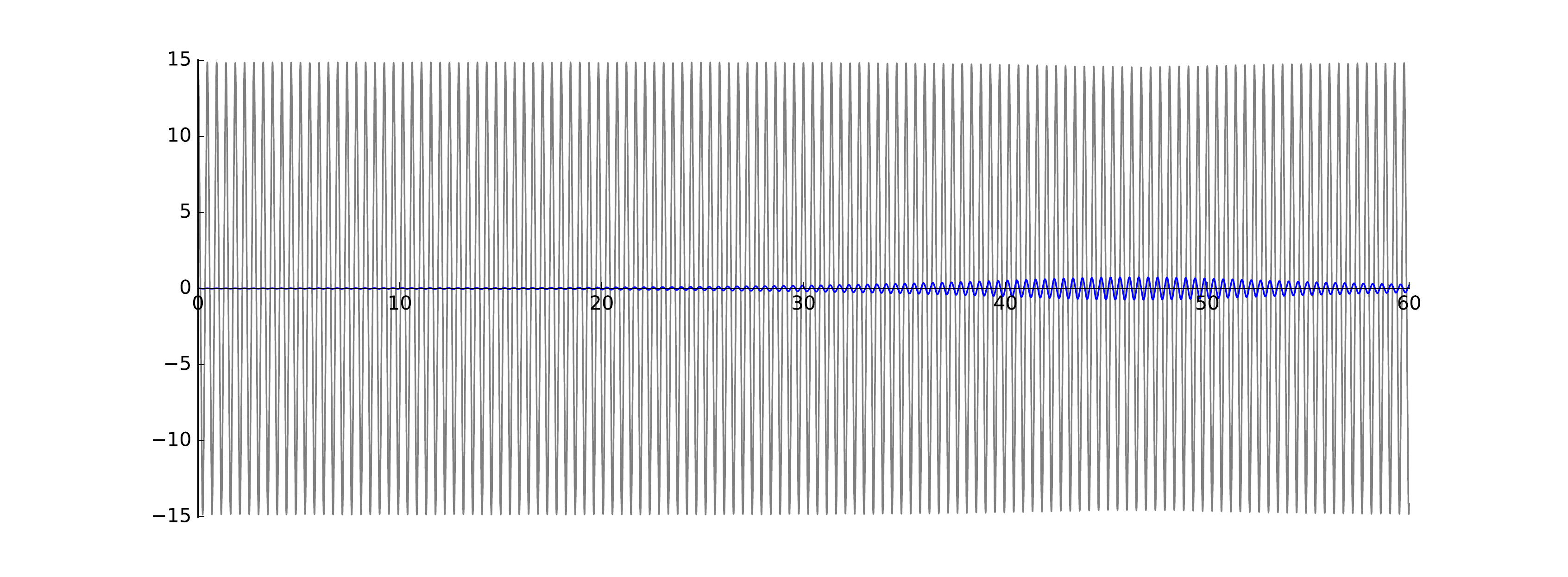}\\
\,\hfill (d) $m=9$, $a=14.89$, $T = 60$\hfill{}\, \\
\caption{Solution displaying instability for $m$ from $6$ through $9$, the amplitude is within the intervals of instability from Table 3.
Gray=longitudinal, Blue=torsional.}
\label{figure1}
\end{figure}

It is well-known by engineers (and also observed in our numerical simulations) that an increment of the damping parameter $\delta$ prevents the appearance of instability. However, such increment is very costly when building a bridge and o good compromise between stiffness and price is of vital importance. So, it is essential to know the optimal damping that guarantees stability. Table \ref{results2} brings the minimum value of $\delta$ that rules out the instability observed in the intervals of Table \ref{results1}; take also into account that $\overline{U} =0$ in Theorem \ref{thmcoercive}.

\begin{table}[h!]
\centering
\begin{tabular}{ccccccccc}
& $\Lambda_3$ & $\Lambda_4$& $\Lambda_5$& $\Lambda_6$ & $\Lambda_7$ & $\Lambda_8$& $\Lambda_9$& $\Lambda_{10}$ \\ \hline \hline
\begin{tabular}{@{}c@{}}energy \\ level\end{tabular}
&2.65*10$^6$&584019&207793&90734.7
&48143.5&26957.6&16459.9&$10358.9$  \\ \hline
\begin{tabular}{@{}c@{}}damping \\ threshold\end{tabular}
&0.48&0.10& 0.03& 0.008& 0.0053& 0.0018& 0.0011&
   0.00046\\ \hline
\end{tabular}
\caption{Damping threshold to rule out instability: longitudinal perturbed by torsional.}
\label{results2}
\end{table}

It appears evident that the damping parameter necessary to rule out high modes may be fairly small, if compared to low modes. This means that
with little economical effort a small damper would have prevented the Tacoma collapse.

\begin{figure}
\includegraphics[width=1\textwidth, height=1.5in]{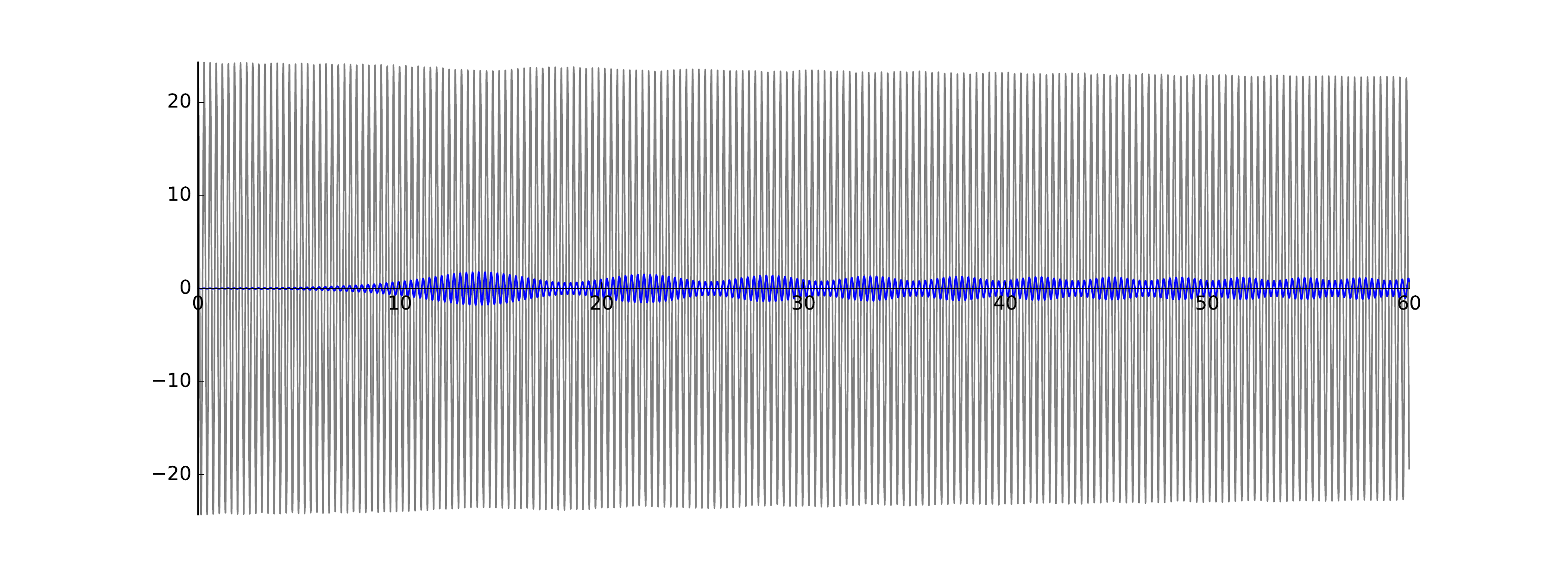}\\
\,\hfill (a) $m=6$, $a=24.3$, $T = 60$, $\delta = 0.03$\hfill{}\, \\
\includegraphics[width=1\textwidth, height=1.5in]{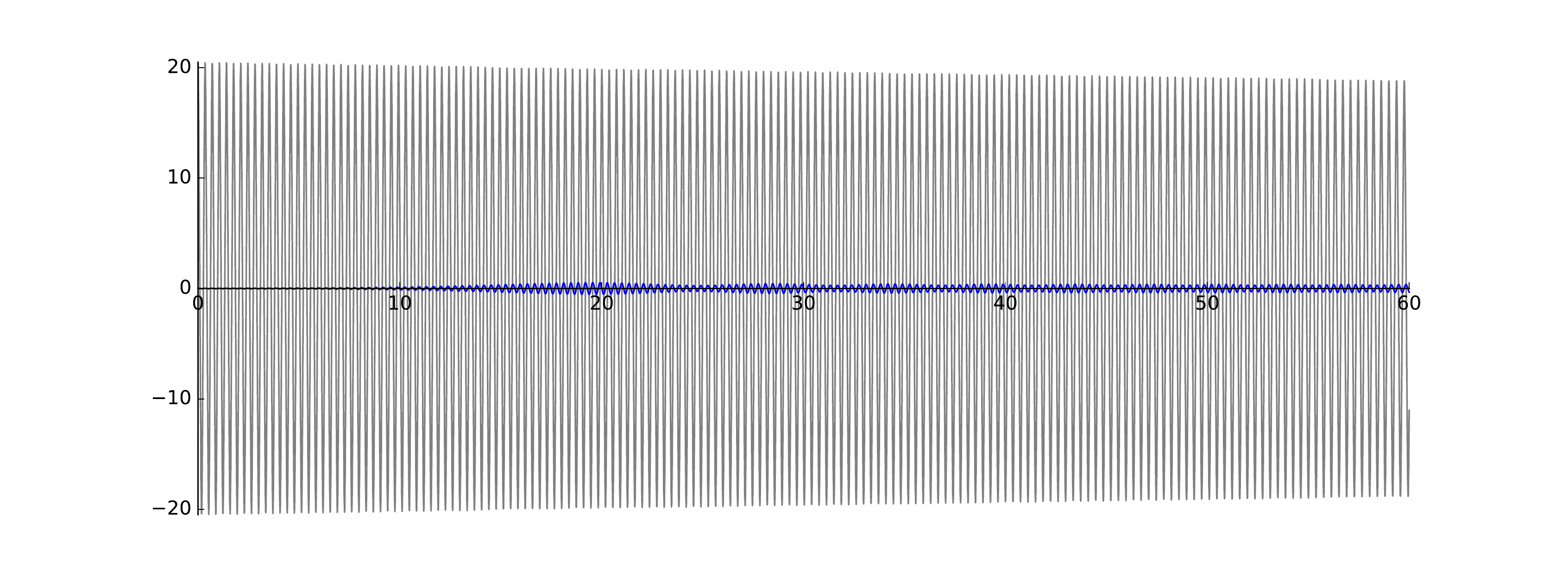}\\
\,\hfill (b) $m=7$, $a=20.5$, $T = 60$, $\delta = 0.004$\hfill{}\, \\
\includegraphics[width=1\textwidth, height=1.5in]{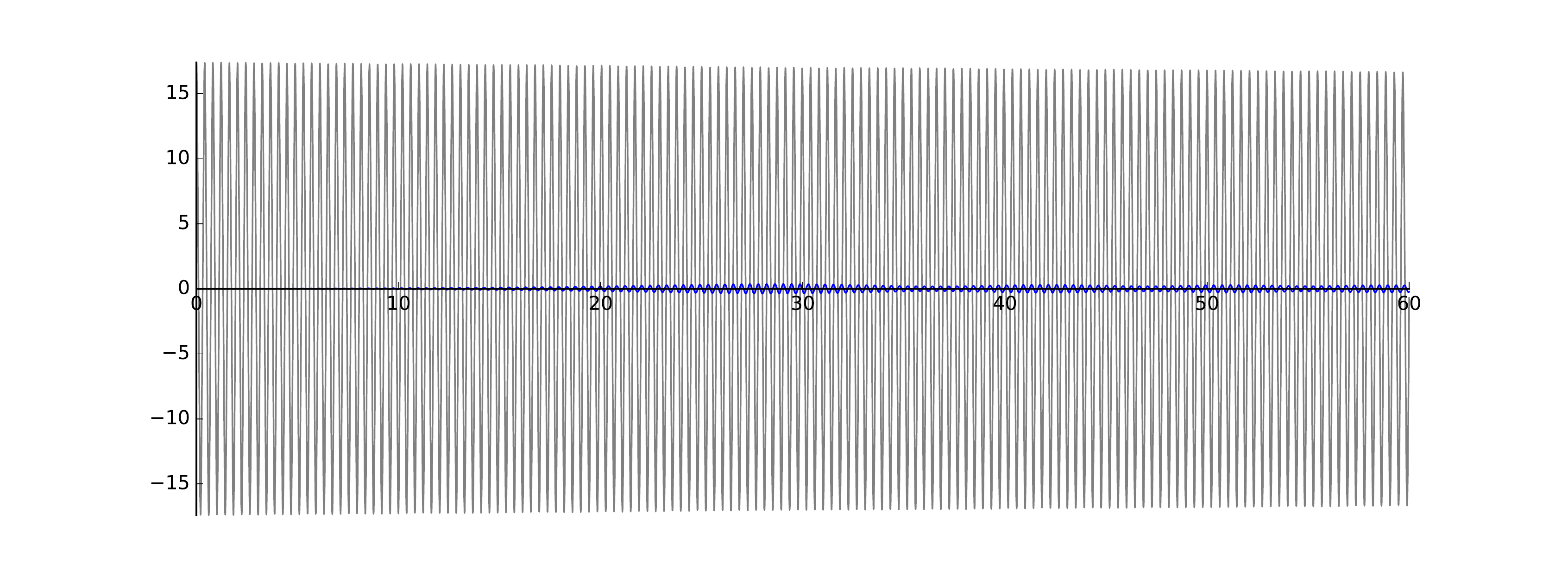}\\
\,\hfill (c) $m=8$, $a=17.42$, $T = 60$, $\delta = 0.002$\hfill{}\, \\
\includegraphics[width=1\textwidth, height=1.5in]{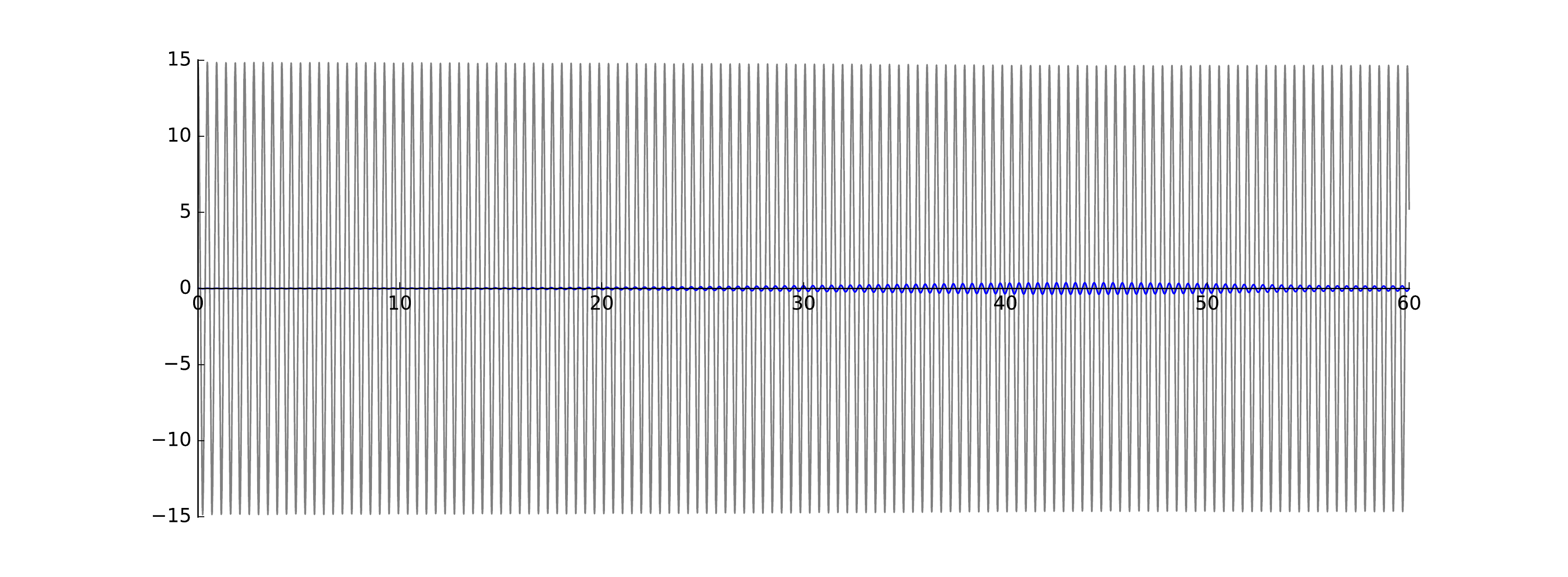}\\
\,\hfill (d) $m=9$, $a=14.89$, $T = 60$, $\delta = 0.0006$\hfill{}\, \\
\includegraphics[width=1\textwidth, height=1.5in]{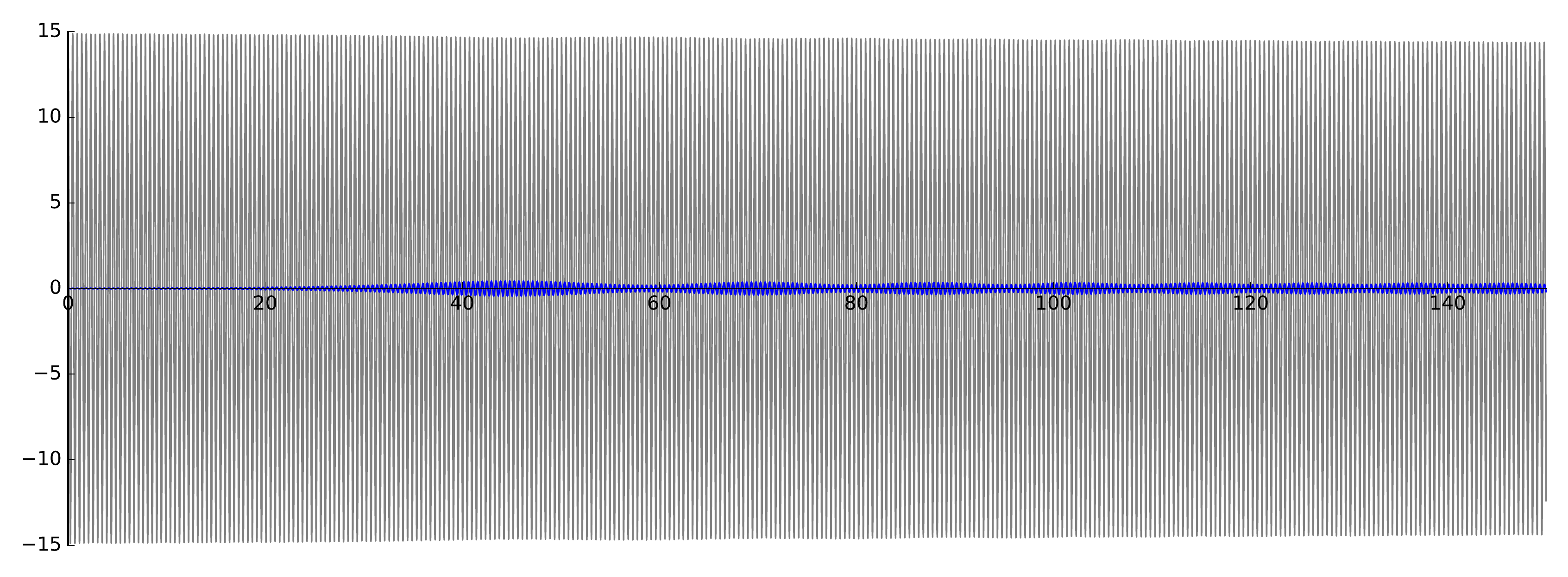}\\
(e) Same plot as (d) over the interval $[0,150]$
\caption{Solution with the same initial data as Figure \ref{figure1} under some damping $\delta$.}
\end{figure}

Even if it falls slightly outside of the range of applications, for the sake of completeness we now do the reverse. Namely, we treat perturbations of the second torsional simple mode by a longitudinal simple mode associated to any of the eigenvalue $\Lambda_3, \Lambda_4, \ldots, \Lambda_{10}$. Again, we will consider initial data as in \eqref{initialdatapert}, but now $\varphi$ corresponds to torsional and $\psi$ to longitudinal. As a direct consequence of Theorem \ref{th:newstability} ii), if the energy is sufficiently large then we get instability for perturbations associated to the eigenvalues $\Lambda_3, \Lambda_5, \Lambda_6, \Lambda_8, \Lambda_9$. Observe that in these situations $\gamma = 3^2/4 \in K_0$, $\gamma = 5^2/4 \in K_1$, $\gamma = 6^2/4 \in K_1$, $\gamma = 8^2/4 \in K_2$ and $\gamma = 9^2/4 \in K_2$, respectively. Table \ref{leastenergytable} shows the energy level and the corresponding initial amplitude, i.e. $u_0$, above which instability appears.

\begin{table}[h!]
\centering
\begin{tabular}{rccccc}

&$ \Lambda_3$&$\Lambda_5$&$\Lambda_6$&$\Lambda_8$&$\Lambda_9$ \\ \hline \hline
Energy &3.82*10$^6$& 205848& 2.04*10$^7$ & 1.19*10$^9$ &4.23*10$^7$\\\hline
Amplitude
&62.5 &30.0 &95.0 &262.4&114.0 \\ \hline

\end{tabular}
\caption{Threshold for instability: torsional perturbed by longitudinal}\label{leastenergytable}
\end{table}

Next we consider the perturbation described in the above paragraph and we indicate, in Table \ref{other-intervals-table}, some intervals where we have detected instability. The interesting fact is that even modes which are stable for large energy (the cases of  $\Lambda_4, \Lambda_7, \Lambda_{10}$ guaranteed by Theorem \ref{th:newstability} i)\,) may experiment intervals of instability. Observe that in these situations we have $\gamma = 4^2/4 \in I_1$, $\gamma = 7^2/4 \in I_2$, $\gamma = 10^2/4 \in I_3$, respectively.

\begin{table}[h!]
\begin{tabular}{cccccccc}
 & $\Lambda_4$& $\Lambda_5$& $\Lambda_6$ & $\Lambda_7$ & $\Lambda_8$& $\Lambda_9$& $\Lambda_{10}$ \\ \hline \hline
\begin{tabular}{@{}c@{}}Shooting\\ interval \end{tabular}  &(40.1, 121)&(30.2, 63.5)&(24.3, 46.2)&
\begin{tabular}{@{}c@{}}(21, 37),\\(65, 141) \end{tabular}&
\begin{tabular}{@{}c@{}}(17, 30), \\ (53, 86) \end{tabular}&
\begin{tabular}{@{}c@{}}(14.5, 26),\\ (43, 65) \end{tabular}&
\begin{tabular}{@{}c@{}}(14, 22),(37, 53),\\ (81.5, 157) \end{tabular} \\ \hline
\end{tabular}
\caption{Intervals where instability is present: torsional perturbed by longitudinal}\label{other-intervals-table}
\end{table}

To conclude, we believe that the numerical experiments of this sections along with the theoretical results from Section \ref{section:theorems-stability} might contribute towards the understanding of bridges oscillation phenomena and indicate further directions of research. Of course, the next step should be to
obtain precise quantitative results.

\section{Proof of existence and uniqueness} \label{section:proofEU}

Here we present the proof for Theorem \ref{exist1}, which is split in several steps. For the existence result we use the Galerkin method, whereas for the uniqueness we argue as in \cite[Section 7.2]{evans}.

\vspace{10pt}
\noindent
{\em Step 1. } Approximating solutions.

In order to build the approximating solutions, we consider the decomposition of $H^{2}_*(\Omega)$ induced by the eigenvalue
problems \eqref{eq:eigenvalueH2L2} and \eqref{eq:eigenvalueH2uxintroduction}. To simplify notation, in this section we will drop the double indexation. The eigenvalues $(\lambda_{m,i})$ will be reorganized in a nondecreasing
sequence $(\lambda_k)$, repeated according to their multiplicity, and the respective eigenfunctions, denoted simply by $(w_k)$, form an orthogonal basis in
$H^{2}_*(\Omega)$ and $L^2(\Omega)$. According to \eqref{normltwowmk}, we normalize the eigenfunctions so that $\|w_k\|_0=1$ and hence $\left( \frac{w_k}{\sqrt{\lambda_k}} \right)$ is an orthonormal basis in $H^{2}_*(\Omega)$.

For all integer $k\ge1$, we set $E_k=\textrm{span}(w_1,\dots,w_k)$ and we consider the orthogonal projections $Q_k:H^{2}_*(\Omega)\to E_k$. We set up the
weak formulation \eqref{weakform} restricted to functions $V$ in $E_k$, namely we seek $U_k\in \mathcal{C}^2([0,T],E_k)$ that satisfies

\begin{empheq}{align}\label{weakformwk}
\left\{
\begin{array}{r}
({U_k'',V})_0 + \delta ({U_k',V})_0 + ({U_k,V})_2 +\phi(U_k) ({(U_k)_x, V_x})_0 =({F ,V})_0\\
U_k(0) = Q_k U_0, \quad U_k'(0)= Q_k V_0\
\end{array}\right.
\end{empheq}
for all $V\in E_k$. We can write the coordinates of $U_k$ in the basis $(w_i)$, given by $u^k_i = (w_i, U_k)_0$, as functions and derive
from \eqref{weakformwk}, thanks to the relation between the usual and the buckling eigenvalue problems, the fairly simple systems of ODE's, for $i=1, \ldots, k$,

\begin{empheq}{align}\label{pvigik}
\left\{
\begin{array}{r}
{u^k_i}''(t) + \delta{u^k_i}'(t)+ \lambda_i u^k_i(t) +\Phi_i^k(u_1^k, \dots, u_k^k) = ({F(t), w_i})_0\,,\\
u^k_i(0)= (w_i, U_0)_0, \quad {u^k_i}'(0)=(w_i, V_0)_0\,,
\end{array}\right.
\end{empheq}
where the coupling terms $\Phi_i^k:\mathbb{R}^k\to\mathbb{R}$ are defined by
\begin{empheq}{align}     \label{definingPhik}
y^k = (y^k_1,\cdots, y^k_k), \quad
\Phi_i^k(y^k) =\Phi^k(y^k) \frac{\lambda_i}{\Lambda_i}y_i^k, \quad   \Phi^k(y^k) =  -P+S\sum_{j=1}^{k}\frac{\lambda_j}{\Lambda_j}(y_j^k)^2\,.
\end{empheq}

For each $i = 1, \ldots, k$, since $\Phi_i^k$ is smooth on $\mathbb{R}^k$, from the classical theory of ODEs, we know that \eqref{pvigik} has a unique solution, which can be extended to its maximal interval of existence $[0,T_{k})$ with $T_k \leq T$. Therefore \eqref{weakformwk} has a unique solution $U_k$, given by
\begin{empheq}{align}
U_k = \sum_{i=0}^ku^k_i w_i \, .
\end{empheq}

\vspace{10pt}\noindent
{\em Step 2. }{Some uniform estimates on $(U_k)$.}

The solution $U_k$, found in \textit{Step 1}, is $\mathcal{C}^2([0,T_k),E_k)$. Therefore we can take, for
each $t\in[0,T_k)$, the function $U_k'(t)\in E_k$ as test function in \eqref{weakformwk} to get
\begin{empheq}{align}\label{testuprime}
(U_k'', U_k')_0 + \delta (U_k', U_k')_0 + (U_k,U_k')_2 +\phi(U_k) \left((U_k)_x, (U_k')_x\right)_0 =({F ,U_k'})_0 \, .
\end{empheq}
Integrating \eqref{testuprime} over $[0,t]$ we find that
\begin{empheq}{align}\label{energyid-wk}
\frac{\|U_k'(t)\|^2_0}2+\frac{\|U_k(t)\|^2_2}2
-\frac{P\|(U_k)_x(t)\|^2_0}2+ \frac{S\|(U_k)_x(t)\|^4_0}4  +\delta\int_0^t \|U_k'\|_0^2 \hspace{.5cm}\\
=\int_0^t ({F ,U_k'})_0+\frac{\|U_k'(0)\|^2_0}2 +\frac{\|U_k(0)\|^2_2}2 -\frac{P\|(U_k)_ x(0)\|^2_0}2 +\frac{S\|(U_k)_x(0)\|^4_0}4 \,.
\end{empheq}
From \eqref{energyid-wk} and H\"older's inequality, we infer that
\begin{empheq}{multline}  \label{ineq1-quase}
\frac12\|{U_k'(t)}\|_0^2+\frac12\|{U_k(t)}\|_2^2 -\frac{P}2\|{(U_k)_ x(t)}\|_0^2+ \frac{S}4\|{(U_k)_x(t)}\|_0^4 \\
\leq \frac12\| {U_k'(0)}\|_0^2  +\frac12\|{ U_k(0)}\|_2^2 -\frac{P}2{\| {(U_k)_x(0)}\|_0^2} + \frac{S}4\| {(U_k)_x(0)}\|_0^4+\frac1{2}\int_0^t \|{F}\|_0^2 +\left(\frac12+|\delta|\right)\int_0^t \| {U_k'}\|_0^2\\
\leq\frac12\|{ U_0}\|_2^2+ \frac12\|{ V_0}\|_0^2+ \frac{C}4\|{ U_0}\|_2^4+\frac1{2}\int_0^t \| {F}\|_0^2+\left(\frac12+|\delta|\right)\int_0^t\| {U_k'}\|_0^2\, ,
\end{empheq}
where the second inequality is a consequence of the embedding $H^2_*(\Omega) \hookrightarrow H^1_*(\Omega)$.
Then, using that the maximum of $\tau \longmapsto \frac{P}{2} \tau^2 - \frac{S}{4} \tau^4$ is $\frac{P^2}{4S}$, from \eqref{ineq1-quase} we get
$$
\frac12\|{U_k'(t)}\|_0^2+\frac12\|{U_k(t)}\|_2^2 \leq \frac {P^2} {4S} +\frac12\|{ U_0}\|_2^2+
\frac12\|{ V_0}\|_0^2+ \frac{C}4\|{ U_0}\|_2^4+\frac1{2}\int_0^t \| {f}\|_0^2 +\left(\frac12+|\delta|\right)\int_0^t \| {U_k'}\|_0^2  \, .
$$
Hence, by the Gronwall inequality, we infer that
\begin{equation}\label{id-important:Arzela-Ascoli}
\frac12\|{U_k'(t)}\|_0^2+\frac12\|{U_k(t)}\|_2^2\leq \exp((1+2|\delta| )T)\left( \frac {P^2} {4S} +
\frac12\|{U_0}\|_2^2+ \frac12\|{ V_0}\|_0^2+ \frac{C}4\|{ U_0}\|_2^4+ \frac{1}{2}\int_0^t \|{ f}\|_0^2\right) \, .
\end{equation}
Since the right hand side is uniformly bounded  with respect to $k$ and $t \in [0,T_k)$,  we conclude that the solutions $(U_k)$ and their
derivatives $(U_k')$ are bounded with respect to $t$, and therefore the local solutions $U_k$ are actually defined on $[0,T]$.
We also infer from \eqref{id-important:Arzela-Ascoli} that $(U_k)$ is bounded in $\mathcal{C}^0([0,T],H^{2}_*(\Omega)) \cap \mathcal{C}^1([0, T ], L^2(\Omega))$. Moreover,  if follows from \eqref{id-important:Arzela-Ascoli} and from the compact embedding $H^2_*(\Omega) \hookrightarrow L^2(\Omega)$ that $(U_k)$ is equicontinuous from $[0,T]$ to $L^2(\Omega)$ and that $(U_k(t))$ is pre-compact in $L^2(\Omega)$ for every $t \in [0,T]$. Then, by the Ascoli-Arzel\'a Theorem, there exists a convergent subsequence $U_k \to U$ in $\mathcal{C}^0([0,T],L^2(\Omega)) $.

\vspace{10pt}\noindent
\textit{Step 3. }$(U_k)$ is a Cauchy sequence  in $\mathcal{C}^0([0,T],H^{2}_*(\Omega)) \cap\mathcal{C}^1([0,T],L^2(\Omega))$.

For integers $n>m>0$ define $U_{m,n} = U_n-U_m$ and $Q_{m,n} = Q_n-Q_m$. Testing \eqref{weakformwk} for $k=m$ with $v=(Q_m U_n - U_m)'$ and for
$k=n$ with $U_{m,n}'$ and subtracting these equations, we get
\begin{multline}\label{test-umnprime}
({U_{m,n})_0'',U_{m,n}'}+\delta({U_{m,n}',U_{m,n}'})_0+({U_{m,n},U_{m,n}'})_2-P({(U_{m,n})_x,(U_{m,n})_x'})_0 \\+S \|{(U_n)_x}\|_0^2({(U_n)_x, (U_{m,n})_x' })_0 -S \|{(U_m)_x}\|_0^2({(U_m)_x, (U_{m,n})_x' })_0= (Q_{m,n}F, U_n')_0 \,.
\end{multline}
Then we integrate the latter over $[0,t]$ to get
\begin{empheq}{align}\label{cauchyy}
\frac12\|{U'_{m,n}}\|_0^2+\frac12\|{U_{m,n}}\|_2^2+\delta \int_0^t\|{U'_{m,n}}\|_0^2=\frac P2\|{(U_{m,n})_x}\|_0^2 + {\mathcal E}(Q_{m,n}U_0,Q_{m,n}V_0) - \frac{S}{4}\|{(U_{m,n})_x}\|_0^4\\
 - \frac{S}{2} \int_0^t\|{(U_n)_x}\|_0^2 \left( \|{(U_{m,n})_x}\|_0^2 \right)'
- S \int_0^t ({(U_{m,n})_x, (U_n+U_m)_x})_0 ({(U_m)_x, (U_{m,n})_x'})_0 + \int_0^t(Q_{m,n}F, U_n')_0 \, .
\end{empheq}

From \eqref{id-important:Arzela-Ascoli}, we know that $(U_n')$ is bounded in $C([0,T], L^2(\Omega))$ and, since $F\in \mathcal{C}^0([0,T],L^2(\Omega))$, we conclude that
\begin{empheq}{align}\label{eq:estconv1}
\left|\int_0^t(Q_{m,n}f, U_n')_0 \right|  \leq C \int_0^T \|{Q_{m,n}f}\|  = o(1)\quad \forall\, t \in [0,T] \, .
\end{empheq}
On the other hand, we also have that
\begin{empheq}{align}\label{eq:estconv2}
{\mathcal E}(Q_{m,n}U_0,Q_{m,n}V_0) = o(1) \quad \text{uniformly w.r.t.} \ \ t \in [0,T] \, .
\end{empheq}

Now we recall the following interpolation inequality: there exists a positive constant $C$ such that
\begin{empheq}{align}\label{eq:interpolationineq}
\|{U_x}^2\| \leq C \|{U}\|_0 \|{U}\|_2 \quad \forall\, U \in H^{2}_*(\Omega) \, .
\end{empheq}
Then, we infer from \eqref{eq:interpolationineq} and \textit{Step 2} that
\begin{empheq}{align}\label{eq:estconv3}
\|{(U_{m,n})_x}\|_0^2 = o(1) \ \ \text{and} \ \ \|{(U_{m,n})_x}\|_0^4 = o(1) \ \ \text{uniformly w.r.t.} \ \ t \in[0,T] \, .
\end{empheq}
Next, from \eqref{eq:interpolationineq} and \textit{Step 2}, we infer that
\begin{eqnarray*}
\left| ({(U_{m,n})_x, (U_n+U_m)_x})_0 \right| &\leq& \|{(U_{m,n})_x}\|_0 \|{(U_n+U_m)_x}\|_0
\leq C \|{U_{m,n}}\|_0\|{U_{m,n}}\|_2 \|{U_n+U_m}\|_0\|{U_n+U_m}\|_2\\
& =& o(1) \ \ \text{uniformly w.r.t.} \ \ t \in [0,T] \, .
\end{eqnarray*}
Moreover,
$$
\int_0^t \left| ({(U_m)_x,(U_{m,n})_x'})_0\right| \leq \int_0^T \left| \int_{-l}^l \int_0^\pi (U_m)_x (U'_{m,n})_x \right|
 =  \int_0^T \left| \int_{-l}^l \int_0^\pi ({U_m})_{xx} U'_{m,n} \right| \leq\int_0^T \|{U_m}\|_2 \|{U'_m-U'_n}\|_0 \, ,
$$
which turns out to be uniformly bounded in $m,n$ from \eqref{id-important:Arzela-Ascoli}. Hence, by combining the last two inequalities, we obtain that
\begin{empheq}{align}\label{eq:estconv6}
 \int_0^t \left[({(U_{m,n})_x, (U_n+U_m)_x})_0 ({(U_m)_x, (U_{m,n})_x'})_0 \right]= o(1) \ \ \text{uniformly w.r.t.} \ \ t \in [0,T] \, .
\end{empheq}

Now observe that \eqref{eq:interpolationineq} and \textit{Step 2} guarantee that $(\|{(U_n)_x}\|_0)$ is uniformly bounded with respect to $t\in[0,T]$ and $n \in \mathbb{N}$. Hence, from \eqref{id-important:Arzela-Ascoli}, \eqref{eq:interpolationineq}, \eqref{eq:estconv3} we infer that
\begin{eqnarray*}
& &\left|\int_0^t\|{(U_n)_x}\|_0^2\left(\|{(U_{m,n})_x}\|_0^2\right)'\right|=2\left|\int_0^t\|{(U_n)_x}\|_0^2 ({(U_{m,n})_x, (U_{m,n})_x'})_0 \right| \\
& &= 2 \left| \int_0^t\|{(U_n)_x}\|_0^2 ({(U_{m,n})_{xx}, (U_{m,n})'})_0 \right| \leq C \int_0^t \|{(U_{m,n})_{xx}}\|_0 \|{U_{m,n}'}\|_0
\leq C \int_0^t \left[\|{U_{m,n}}\|_2^2 + \|{U_{m,n}'}\|_0^2\right] \, .
\end{eqnarray*}
Combined with \eqref{cauchyy}, \eqref{eq:estconv1}, \eqref{eq:estconv2}, \eqref{eq:estconv3}, and \eqref{eq:estconv6}, this enables us to we conclude that
$$
\|{U'_{m,n}}\|_0^2+ \|{U_{m,n}}\|_2^2 \leq C   \int_0^t \left(\|{U_{m,n}'}\|_0^2+\|{U_{m,n}}\|_2^2 \right)+ o(1)\quad\text{uniformly w.r.t.}\quad t\in[0,T] \,.
$$
By combining the latter with the Gronwall inequality, we infer that $(U_k)$ is a Cauchy sequence in $\mathcal{C}^0([0,T],H^{2}_*(\Omega))$ and in $\mathcal{C}^1([0,T],L^2(\Omega))$.
Moreover, at \textit{Step 2}, we proved that, up to a subsequence, $U_k \to U$ in $\mathcal{C}^0([0,T],L^2(\Omega))$. Then we conclude that, $U \in \mathcal{C}^0([0,T],H^{2}_*(\Omega)) \cap \mathcal{C}^1([0,T],L^2(\Omega))$, up to a subsequence,
\begin{equation}\label{ukconvergence}
U_k \rightarrow U \ \ \text{in} \ \ \mathcal{C}^0([0,T],H^{2}_*(\Omega)) \cap \mathcal{C}^1([0,T],L^2(\Omega)) \quad \text{as} \quad k \rightarrow \infty \, .
\end{equation}

\vspace{5pt}
\noindent
{\em Step 4. }The limit function $U$ is a solution of  \eqref{enlm} on the interval $[0,T]$.

Now take $v\in H^{2}_*(\Omega)$ and consider the sequence of projections $(Q_kv)$. Taking
$Q_kv$ as test function in \eqref{weakformwk} we get
$$
({U_k'', Q_kv})_0 + \delta ({U_k', Q_kv})_0 + ({U_k,Q_kv})_2 +\phi(U_k) ({(U_k)_x, Q_kv_x})_0 -({F ,Q_kv})_0=0 \, .
$$
Multiplying this identity by $\varphi\in\mathcal{C}^\infty_c(0,T)$ and integrating over $[0,T]$ we get
$$\int_0^T \Big( ({U_k'', Q_kv})_0 + \delta ({U_k', Q_kv})_0 + ({U_k,Q_kv})_2
+\phi(U_k) ({(U_k)_x, Q_k v_x})_0 -({F ,Q_kv})_0\Big)\varphi =0 \, .
$$
Integration by parts of the first term gives
$$
\int_0^T({U_k', Q_kv})_0\varphi' =\int_0^T\Big(  \delta ({U_k', Q_kv})_0 + ({U_k,Q_kv})_2
+\phi(U_k) ({(U_k)_x,Q_k v_x})_0 -({F ,Q_kv})_0\Big)\varphi 
$$
and, by letting $k\to\infty$, we get
\begin{empheq}{align}\label{uduaslinhas}
\int_0^T ({U',V})_0\varphi'=\int_0^T \Big( ({U,V})_2 +  \delta ({U',V})_0 + \phi(U) ({U_x,V_x})_0 -({F ,V})_0
\Big)\varphi  \, .
\end{empheq}
This shows that $U''\in\mathcal{C}^0([0,T],\mathcal{H})$ and $U$ solves the equation $U'' = -L U - \delta U' -\phi(U)U_{xx} + f$, where
$L : H^{2}_*(\Omega) \rightarrow \mathcal{H}$ stands for the canonical Riesz isometric isomorphism $\langle{Lw,z}\rangle := ({w,z})_2$ for all $w,z \in H^{2}_*(\Omega)$. Therefore
$U$ is indeed a solution of \eqref{weakform}.

\vspace{10pt}\noindent
{\em Step 5. }{Uniqueness for the linear problem, i.e. with $S=0$. }

Consider the linear problem obtained by taking $S=0$ in \eqref{enlm}. In this case, to prove uniqueness of the solution it suffices to prove that the trivial solution is the unique solution of
\begin{empheq}{align}\label{linearweak}
\left\{
\begin{array}{l}
{\langle U'',V \rangle} + \delta (U',V)_0 + (U,V)_2 -P{(U_x,V_x)_0} = 0,  \quad \forall \ V\in H^{2}_*(\Omega) \, , \vspace{5pt}\\
u(0) = 0, \ \ U'(0) = 0 \, .
\end{array}
\right.
\end{empheq}
Given $0\leq s\leq T$, define $V_s(t) = \int_t^s U(\tau) d\tau$ for $0\leq t \leq s$ and $V_s(t) = 0$ otherwise. Then $V_s(t)\in H^{2}_*(\Omega)$ and we can take it as a test function in the first equation of \eqref{linearweak} and integrate over $[0,T]$ to get
\begin{empheq}{align}\label{testvs}
\int_0^s \left[\langle{U''(t),V_s(t)}\rangle+\delta(U'(t), V_s(t) )_0+(U(t),V_s(t))_2-P ({(U(t))}_x{(V_s(t))_x})_0\right] dt=0 \, .
\end{empheq}

Now integrating by parts each term and taking into account that $U(0) = V_s(s)=0$ and $V_s'=-U$ on $[0,s]$, we rewrite

\begin{empheq}{align}
\int_0^s\langle{U'',V_s} \rangle =\left. \langle{U', V_s}\rangle \right |_0^s - \int_0^s\langle{U', V_s '}\rangle&\textcolor{black}{= \int_0^s(U,U')_0,}\\
\delta \int_0^s(U',V_s)_0 = \delta (U,V_s)|_0^s-\delta\int_0^s(U,V_s')_0 &= \delta\int_0^s (U,U)_0,\\
\int_0^s\big[(U,V_s)_2-P({U}_x,{(V_s)}_x)_0\big]&= \int_0^s\big[P ({(V_s)}_x, {(V_s')}_x)_0 -(V_s,V_s')_2\big]\, .
\end{empheq}

From \eqref{testvs} we infer
\begin{empheq}{align}\label{abc}
\|U(s)\|^2_0+\|V_s(0)\|^2_2-P\|(V_s(0))_x\|^2_0= -2\delta\int_0^s\|U\|^2_0\leq 0 \, .
\end{empheq}

Set $w(t) = \int_0^t U(\tau )d\tau = V_t(0)$ and we can estimate the $H^{1}_*(\Omega)$-norm of $w(s)$ using interpolation \cite{adams}. For every $\varepsilon>0$,
\begin{empheq}{align}\label{interpolation}
\| (w(s))_x\|_ 0^2 \leq \|w(s)\|_1^2 \leq C\|w(s)\|_0\|w(s)\|_2\leq \frac{C\varepsilon}2\|w(s)\|_2^2+ \frac{C}{2\varepsilon}\|w(s)\|_0^2\, .\end{empheq}

Now using \eqref{interpolation} in \eqref{abc} and taking $\varepsilon$ small enough, we can write
\begin{empheq}{align}
\|U(s)\|^2_0+(1- C'\varepsilon)\|w(s)\|^2_2\leq \frac{ C'}{\varepsilon} \|w(s)\|_0^2\leq C''\int_0^s \|U\|_0^2\, .\end{empheq}
Finally, the Gronwall inequality implies $U\equiv 0$.

\vspace{10pt}\noindent
{\em Step 6.}{ The energy identity \eqref{energyid} }

Consider first $S =0$. Then, from \eqref{weakform} and the uniqueness in the previous step, we obtain that $U$ is the limit of the sequence $(U_k)$
built in \textit{Step 1}. Thanks to \eqref{ukconvergence}, we can then take limit in \eqref{energyid-wk} to conclude that
\begin{empheq}{align}
\label{energyid-linear}
\frac{\|U'(t)\|^2_0}2 +\delta\int_0^t \|U'\|_0^2+\frac{\|U(t)\|^2_2}2
-\frac{P\|(U(t))_x\|^2_0}2\\
=\int_0^t (F,U)_0+\frac{\|U'(0)\|^2_0}2 +&\frac{\|U(0)\|^2_2}2 -\frac{P\|(U(0))_x\|^2_0}2 \, .
\end{empheq}
This establishes the energy identity \eqref{energyid} in this case.

Now consider $S>0$ and let $U$ be a weak solution of \eqref{enlm}.
Then for every $V\in H^{2}_*(\Omega)$, we can integrate by parts
\begin{empheq}{align}\label{partsLemma}
{(U_x,V_x)_0}= \int_{-l}^l \int_0^\pi U_x V_x dx\ dy = \int_{-l}^l\left( [U_x V]_0^\pi -\int_0^\pi U_{xx} V dx \right)dy=
-({U_{xx},V})_0 \, .
\end{empheq}
Using \eqref{partsLemma} we see that $U$ satisfies
$$
{\langle U'',V \rangle} + \delta (U',V)_0 + (U,V)_2 -P{(U_x,V_x)_0} = (g,V)_0,  \quad \forall \ V\in H^{2}_*(\Omega), \, \forall t \in (0,T) \, ,
$$
where $g =F + S\|{U_x}\|_0^2 U_{xx}\in \mathcal{C}^0([0,T], L^2(\Omega))$. We then conclude as in \eqref{energyid-linear} that
\begin{empheq}{align}
\frac{\|U'(t)\|^2_0}2 +\delta\int_0^t \|U'\|_0^2+\frac{\|U(t)\|^2_2}2
-\frac{P\|U(t)\|^2_1}2\\
=\int_0^t ({F ,U'})_0+\int_0^t({  S\|{U_x}\|_0^2 U_{xx},U'})_0+\frac{\|U'(0)\|^2_0}2 +&\frac{\|U(0)\|^2_2}2 -\frac{P\|U(0)\|^2_1}2 \, .
\end{empheq}
It remains to verify that
\begin{empheq}{align}\label{lastcheck}
S\int_0^t({  \|{U_x}\|_0^2 U_{xx},U'})_0
 = -\frac S4\|{U_x(t)}\|_0^4 +\frac S4\|{(U_0)_x}\|_0^4\, .\end{empheq}

To that end, consider the sequence $(Q_kU)$. Then take into account that $Q_kU \in\mathcal{C}^2([0,t],E_k)$ for every $k$ and that $Q_kU(t)\to U(t)$ in $H^{2}_*(\Omega)$ for every $t\in[0,T]$.
Integrating by parts as in \eqref{partsLemma} we infer that
$$
S\int_0^t({  \|{(Q_kU)_x}\|_0^2 (Q_kU)_{xx},(Q_kU)'})_0 = -\frac S4\|{(Q_kU)_x(t)}\|_0^4 +\frac S4\|{(Q_kU(0))_x}\|_0^4 \, . \hspace{15pt}
$$
Observe that the embedding $H^{2}_*(\Omega)\to H^{1}_*(\Omega)$ ensures that \begin{empheq}{align}
-\frac S4\|{(Q_kU)_x(t)}\|_0^4 +\frac S4\|{(Q_kU(0))_x}\|_0^4\to-\frac S4\|{U_x(t)}\|_0^4 +\frac S4\|{(U_0)_x}\|_1^4\, .\end{empheq}
Then, the Lebesgue Theorem yields the result. Indeed, for every $t\geq0$ we can estimate
\begin{empheq}{align}
\left| \|{(Q_kU)_x}\|_0^2({(Q_kU)_{xx},(Q_kU)'})_0 \right| & \leq \|{Q_kU}\|_1^2\|{Q_kU_{xx}}\|_0\|{Q_kU'}\|_0\leq C\|{Q_kU}\|_2^3\|{(Q_kU)'}\|_0\\
 & \leq C(1+\|{U(t)}\|_2^3\|{U'(t)}\|_0), \qquad\forall \, k,\, t \, ,\end{empheq}
by the Parseval's identity and  $C$ is a positive constant, that does not depend on $k$ or $t$. Now since from hypothesis $U\in\mathcal{C}^1([0,t],L^2(\Omega))\cap\mathcal{C}^0([0,t], H^{2}_*(\Omega))$, the function $H(s) =\|{U(s)}\|_2^3\|{U'(s)}\|_0$ is in $L^1([0,T])$, and we conclude that
\begin{empheq}{align}
S\int_0^t\|{Q_kU_x}\|_0^2({Q_kU_{xx},Q_kU'})_0 \to S\int_0^t({  \|{U_x}\|_0^2 U_{xx},U'})_0  \, .
\end{empheq}

\vspace{10pt}\noindent
{\em Step 7. } Uniqueness for the case of $S>0$.

Let $U$ and $w$ be weak solutions of \eqref{enlm}, that is, $U$ and $w$ satisfy \eqref{weakform}, $U(0) = w(0)=U_0$ and $U'(0)=w'(0)=V_0$. Set $z = U-w$, so that $z(0)=z'(0)=0$ and
$$
{\langle z'',V \rangle} + \delta ({z',V})_0 + ({z,V})_2 +\phi(U){(z_x,V_x)_0}
+(\phi(U) - \phi(w) ){(w_x,V_x)_0} =0, \quad \forall\, V\in H^{2}_*(\Omega) \, .
$$
We must prove that $z=0$. Let us rewrite the nonlinear term to apply our energy identity. Taking into account that
\begin{empheq}{align}
\phi(z){(z_x,V_x)_0} &= -P{(z_x,V_x)_0}+S\|{z_x}\|_0^2{(z_x,V_x)_0} = -P{(z_x,V_x)_0}+S(\|{U_x}\|_0^2+\|{w_x}\|_0^2-2{(U_x,w_x)_0}) {(z_x,V_x)_0}\\
&=\phi(U)  {(z_x,V_x)_0}+S(\|{w_x}\|_0^2-2{(U_x,w_x)_0}){(z_x,V_x)_0} \, ,
\end{empheq}
we see that $z$ satisfies the equation
\begin{empheq}{align}
{\langle z'',V \rangle} + \delta ({z',V})_0 + ({z,V})_2 +\phi(z){(z_x,V_x)_0}=({h,V})_0, \quad \forall \, V\in H^{2}_*(\Omega) \, ,
\end{empheq}
where, using integration by parts, $h\in\mathcal{C}^0([0,T],L^2(\Omega))$ is defined by
\begin{empheq}{align}
h= - S(\|{w_x}\|_0^2-2{(U_x,w_x)_0})z_{xx} + (\phi(U) - \phi(w) )w_{xx} \, .
\end{empheq}
From the energy identity \eqref{energyid} proved in the step before and using the initial data, we have for each $t\in[0,T]$,
\begin{empheq}{align}\label{uniq-step1new}
\frac12\|{z'}\|_0^2+\frac12\|{z}\|_2^2 +\frac14S\|{z_x}\|_0^4
= \frac12P\|{z_x}\|_0^2 + \int_0^t ({h,z'})_0
-\delta\int_0^t \|{z'}\|_0^2 \, .
\end{empheq}
We estimate the first norm on the right using interpolation \cite{adams}:
\begin{empheq}{align}\label{uniq-step2new}
 \|{z_x}\|_0^2 \leq \|{z}\|_1^2\leq C \|{z}\|_2 \|{z}\|_0 \leq C \varepsilon \|{z}\|_2^2+\frac C\varepsilon\|{z}\|_0^2 =
C\varepsilon\|{z}\|_2^2 +\frac C{2\varepsilon}\int_0^t({z,z'})_0 \\
\leq C\varepsilon\|{z}\|_2^2 +\frac C{4\varepsilon}\int_0^t( \|{z}\|_0^2+ \|{z'}\|_0^2)
\leq C\varepsilon\|{z}\|_2^2 +\frac{ \bar{ C}}{4\varepsilon}\int_0^t(\|{z}\|_2^2+&\|{z' }\|_0^2 )\, .
\end{empheq}

We also have
\begin{empheq}{align}\label{uniq-step3quase}
\|{h}\|_0^2 &\leq 2S^2 \, |\|{w_x}\|_0^2-2{(U_x,w_x)_0}|^2\, \|{z_{xx}}\|_0^2+2 |\phi(U) - \phi(w) |^2\,\|{w_{xx} }\|_0^2\\
&\leq C( \|{z}\|_2^2+(\|{U_x}\|_0^2-\|{w_x}\|_0^2)^2 )
\leq C(\|{z}\|_2^2+\|{U_x+w_x}\|_0^2\|{z_x}\|_0^2)  \leq C\|{z}\|_2^2\,.
\end{empheq}
This inequality yields
\begin{empheq}{align}
\label{uniq-step3new}
\left|\int_0^t({h,z'})_0\right| \leq\frac12\int_0^t\left(\|{h}\|_0^2+\|{z'}\|_0^2\right)
\leq C\int_0^t \|{z}\|_2^2 + \frac12 \int_0^t \|{z'}\|_0^2 \, .
\end{empheq}

Combining \eqref{uniq-step1new}, \eqref{uniq-step2new} and \eqref{uniq-step3new} and choosing an appropriate value for $\varepsilon$,
we conclude
\begin{empheq}{align}\label{ineqDomingo}
\|{z'}\|_0^2+\|{z}\|_2^2
\leq C \int_0^t \left(\|{z'}\|_0^2 + \|{z}\|_2^2\right) \,.
\end{empheq}
Then, the Gronwall inequality guarantees that $z\equiv 0$.

\section{Proof of the asymptotic behavior under stationary loads} \label{section:asymptotic}

Throughout this section we restrict our study to stationary loads, more precisely, we assume that $F\in L^2(\Omega)$ is time-independent. In this case, we can write \eqref{physicsenergyid} as
\begin{empheq}{align}
\label{energyid-fixedf}
{\mathcal E}_{F}(U;t)+\delta\int_0^t \|{U'}\|_0^2={\mathcal E}_{F}(U;0) \,,
\end{empheq}
with
\begin{empheq}{align}\label{eq:someenergies}
{\mathcal P}_{F}(U;t) =  {\mathcal P}(U;t) - ({F ,U(t)})_0\,, \quad {\mathcal E}_{F}(U;t)={\mathcal K}(U;t)+{\mathcal P}_{F}(U;t) = {\mathcal E}(U;t)- ({F ,U(t)})_0\,,
\end{empheq}
where ${\mathcal P}(U;t)$, ${\mathcal K}(U;t)$ and ${\mathcal E}(U;t)$ correspond respectively to the potential, kinetic and mechanical energies as defined in Section \ref{section:weel-posedeness}, and we readily see that the energy ${\mathcal E}_{F}(U;t)$ of the solution is non-increasing. We also introduce the functional
${\mathcal P}_F: H^{2}_*(\Omega) \to \mathbb{R}$ defined as
\[
{\mathcal P}_F(W)=\frac12\|{W}\|_2^2-\frac P2\|{W_x}\|_0^2+\frac S4\|{W_x}\|_0^4 - (F,W)_0\,, \quad W \in H^{2}_*(\Omega).
\]

\begin{lemma} \label{uprimetozero}
Let $\delta>0$, $P,S > 0$, $U_0\in H^{2}_*(\Omega)$, $V_0\in L^2(\Omega)$ and $F \in L^2(\Omega)$, and denote by $U$  the solution of \eqref{enlm}. Then:
\begin{enumerate}[i)]
\item
$(U,U') \in L^\infty([0,\infty),H^{2}_*(\Omega)) \times L^{\infty}([0, \infty), L^2(\Omega)) $.
\item $U'\in L^2(\Omega \times(0, \infty)) $.
\end{enumerate}
\end{lemma}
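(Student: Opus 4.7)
The plan is to extract both conclusions from the energy identity \eqref{energyid-fixedf}, exploiting the fact that $\delta>0$ forces the energy to be non-increasing, and then showing that ${\mathcal P}_F$ is coercive on $H^{2}_*(\Omega)$ in spite of the sign-indefinite quadratic term $-\tfrac{P}{2}\|U_x\|_0^2$ and the linear term $-(F,U)_0$.

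First I would establish the coercivity inequality
\begin{empheq}{align}
{\mathcal P}_F(W)\;\ge\;\tfrac14\|W\|_2^2-C_0,\qquad \forall\,W\in H^{2}_*(\Omega),
\end{empheq}
for some constant $C_0=C_0(P,S,\|F\|_0)$. The key ingredients are: (a) Young's inequality
$\tfrac{P}{2}\|W_x\|_0^2\le \tfrac{P^2}{4S}+\tfrac{S}{4}\|W_x\|_0^4$, which neutralises the sign-indefinite quadratic term against the quartic one; and (b) the continuous embeddings $H^{2}_*(\Omega)\hookrightarrow L^2(\Omega)$ combined with Cauchy--Schwarz and Young, giving $|(F,W)_0|\le \tfrac14\|W\|_2^2+C\|F\|_0^2$. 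Adding these two estimates to the definition of ${\mathcal P}_F$ yields the stated lower bound.

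Next I would rewrite \eqref{energyid-fixedf} as
\begin{empheq}{align}
\tfrac12\|U'(t)\|_0^2+{\mathcal P}_F(U(t))+\delta\int_0^t\|U'\|_0^2\;=\;\tfrac12\|V_0\|_0^2+{\mathcal P}_F(U_0)\;=:\;E_0,
\end{empheq}
which is valid for every $t\ge0$ thanks to Theorem \ref{exist1}. Using the coercivity of ${\mathcal P}_F$ and the fact that $\delta\int_0^t\|U'\|_0^2\ge 0$, I obtain
\begin{empheq}{align}
\tfrac12\|U'(t)\|_0^2+\tfrac14\|U(t)\|_2^2\;\le\;E_0+C_0 \qquad\forall\,t\ge0,
\end{empheq}
which is precisely assertion (i). For assertion (ii), I use the monotonicity together with the lower bound on ${\mathcal P}_F$ to write
\begin{empheq}{align}
\delta\int_0^t\|U'\|_0^2\;=\;E_0-\tfrac12\|U'(t)\|_0^2-{\mathcal P}_F(U(t))\;\le\;E_0+C_0,
\end{empheq}
and since the right-hand side is independent of $t$, letting $t\to\infty$ gives $U'\in L^2(\Omega\times(0,\infty))$.

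There is no real obstacle here: the whole argument boils down to an algebraic manipulation of the conserved energy plus the absorption of the negative terms by the leading quartic and the leading quadratic. The only mild subtlety is making sure that the energy identity is available for all $t\ge0$ (not just on a bounded interval $[0,T]$), but this follows from Theorem \ref{exist1} applied with arbitrary $T>0$ and the observation that the constants $E_0$ and $C_0$ do not depend on $T$.
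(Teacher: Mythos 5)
Your proposal is correct and follows essentially the same route as the paper: both exploit the energy identity for time-independent $F$, control the load term $(F,U)_0$ via Cauchy--Schwarz, Young and the embedding $H^{2}_*(\Omega)\hookrightarrow L^2(\Omega)$ (the paper phrases this through $\lambda_1$), and absorb $-\tfrac P2\|U_x\|_0^2$ using the bound $\tfrac P2\tau^2-\tfrac S4\tau^4\le\tfrac{P^2}{4S}$, so that the resulting $t$-independent bound yields both i) and ii). Packaging these estimates as a coercivity statement for ${\mathcal P}_F$ is only a cosmetic reorganization of the paper's argument.
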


\begin{proof}
Recall from \eqref{energyid} that
\begin{empheq}{align}\label{energythiscase}
 \frac12\|{U'}\|_0^2 +  &\frac12\|{U}\|_2^2-\frac P2\|{U_x}\|_0^2+\frac S4\|{U_x}\|_0^4  + \delta\int_0^t \|{U'}\|_0^2 = {\mathcal E}(U_0,V_0) +
\int_0^t ({F ,U'})_0\,.\hspace{10pt}
\end{empheq}
Since $F$ is time-independent, we can estimate
\begin{empheq}{align}\label{estimar}
\left| \int_0^t ({F ,U'})_0 \right| = \left|({F ,U(t)})_0 - ({F ,U_0})_0 \right| \leq \frac{\lambda_1}{4}\|{U}\|_0^2 + \frac1{\lambda_1}\|{F}\|_0^2 + \left| ({F ,U_0})_0\right|\,.
\end{empheq}
From \eqref{energythiscase}, \eqref{estimar} and the characterization of $\lambda_1$ we conclude that
\begin{empheq}{align}
\frac12\|{U'}\|_0^2 +\frac14\|{U}\|_2^2-\frac P2\|{U_x}\|_0^2+\frac S4\|{U_x}\|_0^4  + \delta\int_0^t \|{U'}\|_0^2 \leq
 \frac1{\lambda_1}\|{F}\|_0^2 +\left| ({F ,U_0})_0\right|+&{\mathcal E}(U_0,V_0)\, .
\end{empheq}
Then, using that the maximum of $\tau \longmapsto \frac{P}{2} \tau^2 - \frac{S}{4} \tau^4$ is $\frac{P^2}{4S}$, we infer that
\begin{empheq}{align}\label{enfim}
\frac12\|{U'}\|_0^2 + \frac14 \|{U}\|_2^2 + \delta\int_0^t \|{U'}\|_0^2 \leq
 \frac1{\lambda_1}\|{F}\|_0^2& +\left|({F ,U_0})_0\right|+{\mathcal E}(U_0,V_0) +\frac{P^2}{4S} \, .
\end{empheq}
Observing that the right hand side in \eqref{enfim} is independent of $t$, we conclude the proof.
\end{proof}

In the next lemma we establish the convergence of the solution.

\begin{lemma}\label{convergencelemma}
Let $\delta>0$, $S > 0$, $U_0\in H^{2}_*(\Omega)$, $V_0\in L^2(\Omega)$, $F \in L^2(\Omega)$,
and denote by $U$  the solution of \eqref{enlm}.
\begin{enumerate}[i)]
\item Then there exist $\overline U\in H^{2}_*(\Omega)$ a solution to \eqref{stationary} and an increasing sequence $(t_n)$ such that
\begin{empheq}{align}\label{resultadofraco}
U(t_n) \to \overline U\text{ in }H^{2}_*(\Omega), \quad U'(t_n)\to 0\text{ in }L^2(\Omega), \quad \quad n\to\infty \, .
\end{empheq}
\item
If $\overline U\in H^{2}_*(\Omega)$ also satisfies
\begin{empheq}{align}\label{condextra}
{\mathcal P}_{F}(\overline U) \leq \inf_{t \in[0,\infty)} {\mathcal P}_{F}(U;t) \, ,
\end{empheq}
then a stronger convergence holds:
\begin{empheq}{align}\label{resultadoforte}
U(t) \to \overline U\text{ in }H^{2}_*(\Omega), \quad U'(t)\to 0\text{ in }L^2(\Omega), \quad \quad t\to\infty \, .
\end{empheq}
\end{enumerate}
\end{lemma}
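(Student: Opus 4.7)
From Lemma \ref{uprimetozero} I know $U\in L^\infty([0,\infty);H^{2}_*(\Omega))$ and $U'\in L^2([0,\infty);L^2(\Omega))$, so I can pick $t_n\to\infty$ with $\|U'(t_n)\|_0\to 0$; weak $H^{2}_*(\Omega)$-compactness and the compact embedding $H^{2}_*(\Omega)\hookrightarrow H^{1}_*(\Omega)$ give, up to a subsequence, $U(t_n)\rightharpoonup \overline U$ in $H^{2}_*(\Omega)$ and $U(t_n)\to \overline U$ in $H^{1}_*(\Omega)$. To identify $\overline U$ as a solution of \eqref{stationary}, I consider the translates $U_n(s):=U(t_n+s)$ on $[0,T]$: these are uniformly bounded in $C([0,T];H^{2}_*(\Omega))\cap C^1([0,T];L^2(\Omega))$, and $\int_0^T\|U_n'\|_0^2\,ds=\int_{t_n}^{t_n+T}\|U'\|_0^2\,ds\to 0$. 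An Aubin--Lions compactness then yields $U_n\to \overline U$ (constantly in $s$) in $C([0,T];H^{1}_*(\Omega))$ and $U_n'\to 0$ in $L^2(0,T;L^2(\Omega))$; passing to the limit in \eqref{weakform} (the $U''$-term handled by integration by parts against a cutoff in $\mathcal{C}^\infty_c(0,T)$) identifies $\overline U$ as a weak solution of \eqref{stationary}.

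\textbf{Strong $H^{2}_*(\Omega)$-convergence, the main technical step of (i).} I pick $T_n\in[1,2]$ with $\|U'(t_n+T_n)\|_0\to 0$, available because $\int_{t_n}^{t_n+2}\|U'\|_0^2\,ds\to 0$, test \eqref{weakform} with $V=U(s)$ and integrate over $[t_n,t_n+T_n]$. The $U''$-term contributes $[(U',U)_0]_{t_n}^{t_n+T_n}-\int\|U'\|_0^2$, which vanishes in the limit; the $U'$-cross term is $o(1)$ by Cauchy--Schwarz; and by the $C([0,T];H^{1}_*(\Omega))$-convergence, $P\|U_x\|_0^2$, $S\|U_x\|_0^4$, $(F,U)_0$ converge uniformly in $s$ to their values at $\overline U$. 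Along a further subsequence $T_n\to T^*\in[1,2]$, this yields
\[
\int_{t_n}^{t_n+T_n}\|U(s)\|_2^2\,ds \;\longrightarrow\; T^*\bigl[P\|\overline U_x\|_0^2-S\|\overline U_x\|_0^4+(F,\overline U)_0\bigr]=T^*\|\overline U\|_2^2,
\]
using \eqref{stationary} tested with $\overline U$. Weak lower semicontinuity $\|\overline U\|_2^2\leq \liminf\|U_n(s)\|_2^2$ combined with Fatou then forces $\liminf\|U_n(s)\|_2^2=\|\overline U\|_2^2$ for a.e.\ $s\in(0,T^*)$. Picking such an $s$ along which also $\|U_n'(s)\|_0\to 0$ (available since $\|U_n'\|_0^2\to 0$ in $L^1(0,T_n)$), setting $\tau_n:=t_n+s$, the norm convergence together with the weak $H^{2}_*(\Omega)$-limit (equal to $\overline U$ by pointwise $H^{1}_*(\Omega)$-convergence and uniqueness of limits) delivers $U(\tau_n)\to \overline U$ strongly in $H^{2}_*(\Omega)$ together with $U'(\tau_n)\to 0$ in $L^2(\Omega)$.

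\textbf{Plan for part (ii).} Continuity of $\mathcal{E}_F$ along the strongly convergent $\tau_n$ gives $E_\infty:=\lim_t\mathcal{E}_F(U;t)=\mathcal{P}_F(\overline U)$. The hypothesis then controls the kinetic energy everywhere,
\[
\tfrac{1}{2}\|U'(t)\|_0^2 = \mathcal{E}_F(U;t)-\mathcal{P}_F(U;t)\leq \mathcal{E}_F(U;t)-\mathcal{P}_F(\overline U)\longrightarrow 0,
\]
so $\|U'(t)\|_0\to 0$ globally and therefore $\mathcal{P}_F(U;t)\to \mathcal{P}_F(\overline U)$. For an arbitrary sequence $t_n\to\infty$, the argument of (i) applies without any further selection and produces $s$ and a stationary $U^*$ with $U(t_n+s)\to U^*$ strongly in $H^{2}_*(\Omega)$; the $L^2$-bound $\|U(t_n+s)-U(t_n)\|_0\leq s\sup_{\tau\geq t_n}\|U'(\tau)\|_0\to 0$ transfers this to $U(t_n)\to U^*$ in $L^2(\Omega)$, interpolation $\|W\|_1\leq C\|W\|_0^{1/2}\|W\|_2^{1/2}$ then promotes it to strong $H^{1}_*(\Omega)$-convergence, and the identity $\mathcal{P}_F(U(t_n))\to \mathcal{P}_F(\overline U)=\mathcal{P}_F(U^*)$ (via the continuous part of $\mathcal{P}_F$) upgrades it further to $U(t_n)\to U^*$ strongly in $H^{2}_*(\Omega)$. \emph{The main obstacle} is to conclude $U^*=\overline U$, so that subsequential convergence becomes full-trajectory convergence: this I expect to follow from the connectedness of the (now $H^{2}_*(\Omega)$-strongly precompact) $\omega$-limit set together with the discreteness of stationary solutions at the energy level $\mathcal{P}_F(\overline U)$ in the regimes where the lemma is applied, as in Theorems \ref{thmcoercive} and \ref{oculos}.
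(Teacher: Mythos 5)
Your part (i) is essentially correct and takes a genuinely different route from the paper: you work with the translates $U_n(s)=U(t_n+s)$, identify $\overline U$ by Aubin--Lions compactness and passage to the limit against a cutoff, and recover strong $H^{2}_*(\Omega)$-convergence by testing with $U(s)$ itself, using the stationary equation tested with $\overline U$ plus weak lower semicontinuity and Fatou. The paper instead selects intermediate times by mean-value arguments on windows $(t_n,t_{n+1})$ of bounded length, proves the weak limit is independent of the test function via the estimate \eqref{coisa1}, and gets strong convergence by testing the difference of the two weak formulations with $w=U-\overline U$; both routes are legitimate, and yours is arguably more streamlined, modulo the routine justification of $\int\langle U'',U\rangle=\big[(U',U)_0\big]-\int\|U'\|_0^2$, which the paper uses as well.

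The genuine gap is in part (ii), and it is exactly the point you flag as ``the main obstacle''. For an arbitrary sequence $t_n\to\infty$ your argument only yields a subsequential strong $H^{2}_*(\Omega)$ limit $U^*$, stationary and with ${\mathcal P}_F(U^*)={\mathcal P}_F(\overline U)$, and you propose to conclude $U^*=\overline U$ from connectedness of the $\omega$-limit set together with ``discreteness of stationary solutions at the energy level ${\mathcal P}_F(\overline U)$ in the regimes where the lemma is applied''. That last ingredient is neither proved nor available at the lemma's level of generality: the lemma assumes nothing on $P$ or $F$ beyond what is in its statement, it is used as a general tool, and the structure of the solution set of \eqref{stationary} at a fixed energy level (it need not be discrete) is precisely what a correct proof must not rely on. The paper closes this point with a quantitative trajectory estimate instead of any $\omega$-limit reasoning: the sequence in (i) is constructed with uniformly bounded gaps (see \eqref{maiorque2}), so for $t\in[t_n,t_{n+1}]$ one has $\|U(t)-\overline U\|_0^2\le 2\|U(t_n)-\overline U\|_0^2+C\int_{t_n}^{t_{n+1}}\|U'\|_0^2$ as in \eqref{coisa2}, and the second term vanishes because $U'\in L^2(\Omega\times(0,\infty))$ by Lemma \ref{uprimetozero}; note that your bound $\|U(t_n+s)-U(t_n)\|_0\le s\,\sup_{\tau\ge t_n}\|U'(\tau)\|_0$ only compares times a fixed distance apart and cannot replace this. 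Hence the \emph{whole} trajectory converges to $\overline U$ in $L^2(\Omega)$, then in $H^{1}_*(\Omega)$ by interpolation, then in $H^{2}_*(\Omega)$ using ${\mathcal P}_F(U;t)\to{\mathcal P}_F(\overline U)$, which pins the limit to $\overline U$ with no appeal to properties of the stationary set. Since your construction in (i) produces times of the form $t_{n_j}+s$ with no control on the gaps, part (ii) as written does not establish \eqref{resultadoforte}; to repair it, either rerun your window argument on every interval $[k,k+2]$ so as to obtain a bounded-gap sequence interpolating all large times, or adopt the paper's estimate.
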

\begin{proof} {\em Proof of i) }
Let $(s_n)$ be a sequence of positive numbers such that $s_n\to\infty$ and
\begin{empheq}{align}\label{maiorque2} 3 \geq s_{n+1}-s_n\geq2, \ \ \forall \, n\, .
\end{empheq}
Then, by Lemma \ref{uprimetozero},
\begin{empheq}{align}\label{serieconverge}
\sum_n \int_{s_n}^{1+s_n} \|{U'}\|_0^2 \leq \int_0^\infty  \|{U'}\|_0^2 < \infty \, ,
\end{empheq}
which implies that
\begin{empheq}{align}\label{aaaa}
\int_{s_n}^{1+s_n} \|{U'}\|_0^2 \to 0, \ \ \text{as} \ \ n \to \infty\,.
\end{empheq}
So, for each $n \in {\mathbb{N}}$, there exists $t_n \in(s_n, 1+s_n)$ such that
\begin{empheq}{align}\label{cond1}\|{U'(t_n)}\|_0^2 = \int_{s_n}^{1+s_n} \|{U'}\|_0^2 \to 0\,.\end{empheq}
From \eqref{maiorque2}, $(t_n)$ is increasing and $1\leq t_{n+1} - t_n\leq 4$. This allows us to argue as in \eqref{serieconverge} and conclude that
\begin{empheq}{align}\label{cond2}
\int_{t_n}^{t_{n+1}}  \|{U'}\|_0^2 \to 0 \, .
\end{empheq}
Then, for every $v\in H^{2}_*(\Omega)$, from \eqref{cond2},
\begin{empheq}{align}\label{inequprime}
\left| \int_{t_n}^{t_{n+1}}  \delta ({U',V})_0 \right| \leq\delta\int_{t_n}^{t_{n+1}} \|{U'}\|_0 \|{V}\|_0 \leq\delta\|{V}\|_0 \int_{t_n}^{t_{n+1}} \|{U'}\|_0
\leq 2 \,\delta\|v\|_0  \left( \int_{t_n}^{t_{n+1}} \|{U'}\|_0^2  \right)^{1/2} \to 0 \,.
\end{empheq}
Moreover, from \eqref{cond1},
\begin{empheq}{align}   \label{ineq53}
\left| \int_{t_n}^{t_{n+1}}{\langle U'',V \rangle} \right| = \left| ({U'(t_{n+1}),V})_0- ({U'(t_{n}),V})_0 \right|  \leq  \|{V}\|_0(\|{U'(t_{n})}\|_0+\|{U'(t_{n+1})}\|_0) \to 0 \,.
\end{empheq}
The last two inequalities yield
\begin{empheq}{align}\label{timeparttozero}
\int_{t_n}^{t_{{n}+1}} \left[ {\langle U'',V \rangle}+\delta ({U',V})_0\right] \to0, \quad n\to\infty \,.
\end{empheq}
So, given $V\in H^{2}_*(\Omega)$, for each $n$ there exists $t_n^v \in (t_n,t_{n+1})$ such that
\begin{empheq}{align}\label{timepartgoestozero}
\langle{U''(t_n^v),V} \rangle+ \delta ({U'(t_n^v),V})_0 = (t_{n+1} - t_n)^{-1}\int_{t_n}^{t_{n+1}} \left[{\langle U'',V \rangle} + \delta ({U',V})_0\right] \to 0
\, .\end{empheq}
Since $(U(t_n^v))$ is a bounded sequence in $H^{2}_*(\Omega)$,  there exists a subsequence $(n_k) \subset {\mathbb{N}}$, with $t_{n_k}^v \in (t_{n_k}, t_{n_k + 1})$, such that $U(t_{n_k}^v) \rightharpoonup \overline{U}_v$ in $H^{2}_*(\Omega)$. Let us prove that $\overline{U}_v$ does not depend on $V$. Let $V,W\in H^{2}_*(\Omega)$. Since  $(U(t_n^v))$ and  $(U(t_n^w))$ are bounded sequence in $H^{2}_*(\Omega)$, there is exists a common subsequence $(n_k)$ such that $U(t_{n_k}^v) \rightharpoonup \overline{U}_v$ and $U(t_{n_k}^w) \rightharpoonup \overline{U}_w$ in $H^{2}_*(\Omega)$. Then
\begin{empheq}{align}\label{coisa1}
\|{U(t_{n_k}^v)-U(t_{n_k}^w)}\|_0^2 =\int_\Omega \left(U(t_{n_k}^v) - U(t_{n_k}^w) \right)^2 = \int_\Omega
\left( \int_{t_{n_k}^v}^{t_{n_k}^w} U'\right)^2 \leq \\
\leq \int_\Omega\left|\int_{t_{n_k}^v}^{t_{n_k}^w} U'^2\right|\left|\int_{t_{n_k}^v}^{t_{n_k}^w} 1\right| \leq  4\int_\Omega \left|\int_{t_{n_k}^v}^{t_{n_k}^w} U'^2 \right|
\leq  4\int_{t_{n_k}}^{t_{n_k +1}} \|{U'}\|_0^2 \, ,
\end{empheq}
and from \eqref{aaaa}, taking the limit in $k$, we get $\overline{U}_v = \overline{U}_w$.  Therefore we drop the subscript $v$ and we denote this common limit as $\overline U$.

We now show that $\overline U$ is a solution of the stationary problem \eqref{stationary}.
Take $V\in H^{2}_*(\Omega)$. Since, up to a subsequence, $U(t_n^v) \rightharpoonup \overline{U}$ in $H^{2}_*(\Omega)$,
\begin{empheq}{align}({U(t_n^v),V})_2\to ({\overline{U},V})_2 \,.\end{empheq}
From the compact embedding $H^{2}_*(\Omega) \hookrightarrow H^{1}_*(\Omega)$,
\begin{empheq}{align}\label{strongh1convergence}
\|{(U(t_n^v))_x}\|_0^2 \to \|{(\overline U)_x}\|_0^2 \quad\textrm{and}\quad ({(U(t_n^v))_x,V_x})_0\to ({(\overline U)_x, V_x})_0\, .\end{empheq}
Now from \eqref{timeparttozero}, \eqref{timepartgoestozero}, and the above convergences, by taking limit in \eqref{weakform}, we infer
that $\overline U$ is a weak solution of \eqref{stationary}, namely,
\begin{empheq}{align}\label{weakstationary}
({\overline U,V})_2 + \phi(\overline U){(\overline{U}_x,V_x)_0} = ({F ,V})_0 \, .
\end{empheq}

\vspace{10pt}
Subtracting \eqref{weakstationary} from \eqref{weakform} and writing $w(t)=U(t)-\overline U$, we get
\begin{empheq}{align}\label{idw00}
\!\!\!\!\langle {w'',V} \rangle+ \delta({w',V})_0+({w,V})_2 -P{(w_x,V_x)_0}
+S\|{U_x}\|_0^2{(U_x,V_x)_0}- S\|{\overline U_x}\|_0^2 ({\overline U}_x, &{ V}_x)_0 =0, \ \forall \,V \in H^{2}_*(\Omega) \, .
\end{empheq}
Take $w$ as test function and integrate \eqref{idw00} over $(t_n,t_{n+1})$
\begin{empheq}{align} \label{idw0}
\int_{t_{n}}^{t_{n+1}} \left[\langle {w'',w}\rangle  + \delta({w',w})_0 + \|{w}\|_2^2-P \|{w_x}\|_0^2
+ S\|{U_x}\|_0^2({U_x,w_x})_0 -S\|{\overline U_x}\|_0^2({\overline U_x,w_x})_0\right]=0 \, .
\end{empheq}

Integrating by parts,
$$
\int_{t_{n}}^{t_{n+1}}\left[\langle{w'',w} \rangle+ \delta({w',w})_0 \right]= ({w'(t_{n+1}),w(t_{n+1})})_0 - ({w'(t_n),w(t_n)})_0
+\int_{t_{n}}^{t_{n+1}} \left[-\|{w'}\|_0^2 +\delta\langle{w',w} \rangle\right] \, ,
$$
and so
\begin{empheq}{align} \label{extremamentelonga}
&\left|\int_{t_{n}}^{t_{n+1}}\left[\langle{w'',w}\rangle + \delta({w',w})_0\right]\right| \leq  | ({w'(t_{n+1}),w(t_{n+1})})_0 |+ |({w'(t_n),w(t_n)})_0|+ \left|
\int_{t_{n}}^{t_{n+1}} \left[-\|{w'}\|_0^2 +\delta\langle{w',w}\rangle\right] \right|  \\
& \leq  \sup_{k\in{\mathbb{N}}} \|{w(t_k)}\|_0( \|{U'(t_n)}\|_0+\|{U'(t_{n+1})}\|_0  ) + \int_{t_{n}}^{t_{n+1}} \left[\|{U'}\|_0^2 +\delta |\langle{w',w}\rangle| \right]\\
& \leq   \sup_{k\in{\mathbb{N}}} \|{w(t_k)}\|_0( \|{U'(t_n)}\|_0+\|{U'(t_{n+1})}\|_0  )
 + \int_{t_{n}}^{t_{n+1}} \|{U'}\|_0^2 + 2 \, \delta\sup_{t\in[0,\infty)} \|{w}\|_0\left(\int_{t_{n}}^{t_{n+1}}  \|{U'}\|_0^2\right)^{1/2} \, .
\end{empheq}
By combining the latter with \eqref{cond1} and \eqref{cond2}, we infer from \eqref{idw0} that
\begin{empheq}{align}\label{cond3}
\int_{t_n}^{t_{n+1}} \left[\|w^2\|_2 -P \|{w_x}\|_0^2+S\|{U_x}\|_0^2{(U_x,w_x)_0} -S\|{\overline U_x}\|_0^2 ({\overline U}_x,{w}_x)_0\right] \to 0 \, .
\end{empheq}
Hence, adding \eqref{cond2} and \eqref{cond3}, there exists $\overline t_n\in (t_n,t_{n+1})$, such that
\begin{empheq}{align}   \label{indoprazero77}
\!\!\!\|{U'(\overline t_n)}\|_0^2+\!\| {w(\overline t_n)}\|_2^2 \!-\!P \|{w(\overline t_n)_x}\|_0^2+\!S\|{U(\overline t_n)_x}\|_0^2({U(\overline t_n)}_x,{w(\overline t_n)}_x)_0
\!-\!S\|{\overline U(\overline t_n)_x}\|_0^2({\overline U(\overline t_n)}_x,{w(\overline t_n)}_x)_0 \!\to \! 0 \,.
\end{empheq}

Let us prove that $U(\overline t_n)\to \overline U$ in $H^{1}_*(\Omega)$ as $n\to\infty$. Indeed, consider any subsequence $(\overline t_{n_k})$.  From the boundedness of $U$ in $H^{2}_*(\Omega)$, a subsubsequence of $(U(\overline t_{n_{k}}) )$, denoted $(U(\tau_j))$, converges weakly in $H^{2}_*(\Omega)$ and strongly in $H^{1}_*(\Omega)$ to some $\overline{\overline U}$. Arguing as in \eqref{coisa1}, we show that $\overline{U} = \overline{\overline U}$. Then $U(\tau_j)\to \overline U$ in $H^{1}_*(\Omega)$ as $j\to\infty$. Therefore, we infer that  $U(\overline t_n)\to \overline U$ in $H^{1}_*(\Omega)$ as $n\to\infty$.

Now from \eqref{indoprazero77} we obtain that
\begin{empheq}{align}
\|{U'(\overline t_{n{}})}\|_0^2+\| {w(\overline t_{n{}})}\|_2^2
=\Big(\|{U'(\overline t_{n{}})}\|_0^2+\|{w(\overline t_{n{}})}\|_2^2 -P \|{w(\overline t_{n{}})_x}\|_0^2\\
+S\|{U(\overline t_{n{}})_x}\|_0^2 ({U(\overline t_{n{}})}_x,{w(\overline t_{n{}})}_x)_0
-S\|{\overline U(\overline t_{n{}})_x}\|_0^2(&{\overline U(\overline t_{n{}})}_x,{w(\overline t_{n{}})}_x)_0 \Big) + \\
+ \Big(P \|{w(\overline t_{n{}})_x}\|_0^2-S\|{U(\overline t_{n{}})_x}\|_0^2({U(\overline t_{n{}})}_x,{w(\overline t_{n{}})}_x)_0
+S\|{\overline U(\overline t_{n{}})_x}\|_0^2({\overline U(\overline t_{n{}})}_x,&{w(\overline t_{n{}})}_x)_0 \Big)  \to 0 \, .
\end{empheq}
Therefore $U(\overline t_{n{}})\to \overline U$ in $H^{2}_*(\Omega)$ and $  U'(\overline t_{n{}})\to 0$ in $L^2(\Omega)$, which establishes i).

\medskip
Then, from \eqref{resultadofraco},
\begin{empheq}{align}\label{just-proved}
{\mathcal E}_{F}(U;\overline t_{n{}} ) \to  {\mathcal P}_{F}(\overline U) \, .
\end{empheq}
Since, according to \eqref{energyid-fixedf}, the energy $\mathcal{E}_{F}$ is monotonic, we infer from \eqref{just-proved}  that the convergence of the energy happens in the whole flow, that is,
\begin{empheq}{align}\label{just-proved-allt}
{\mathcal E}_{F}(U;t ) \to  {\mathcal P}_{F}(\overline U) \, .
\end{empheq}

\medskip
\noindent {\em Proof of ii) }
Next, assume that \eqref{condextra} holds. Then, from \eqref{just-proved-allt} and Lemma \ref{uprimetozero},
\begin{empheq}{align}\label{pot-converges}
{\mathcal P}_{F} (\overline U) = \limsup_{t\to\infty}\big( {\mathcal P}_{F}(U;t)+{\mathcal K}(U;t) \big)\geq {\mathcal P}_{F}(\overline U) +\limsup_{t\to\infty} {\mathcal K}(U;t) \, .
\end{empheq}
Therefore, $U'(t) \to 0$ in $L^2(\Omega)$ as $t\to\infty$. Moreover,
\begin{empheq}{align}\label{pfmenospf}
{\mathcal P}_{F}(U;t) - {\mathcal P}_{F}(\overline U) = {\mathcal E}_{F}(U;t) - {\mathcal K} (U;t) - {\mathcal P}_{F} (\overline U) \to 0, \quad t\to\infty \, .
\end{empheq}
Arguing as in \eqref{coisa1}, we infer that $U(t) \to \overline U$ in $L^2(\Omega)$ as $t \to \infty$.
Indeed, let $(t_n)$  be the increasing sequence from \eqref{resultadofraco}, such that $t_n\to\infty$, $U(t_n) \to \overline U$ in $H^{2}_*(\Omega)$. Recall that $0 < t_{n+1} - t_n \leq 8$ for all $n \in {\mathbb{N}}$. Given $t>0$ large, we must have $t\in[t_n,t_{n+1}]$ for some $n\in{\mathbb{N}}$.
 We then estimate
\begin{empheq}{align}\label{coisa2}
\|{U(t) -\overline U}\|_0^2 & \leq 2(\|{U(t)-U(t_{n})}\|_0^2+\|{U(t_n) - \overline U}\|_0^2)
=2\|{U(t_n) - \overline U}\|_0^2 +2\int_\Omega \left(U(t) - U(t_{n}) \right)^2 \\
& =2\|{U(t_n) - \overline U}\|_0^2  +2\int_\Omega( \int_{t_n}^{t} U')^2 \leq
2\|{U(t_n) - \overline U}\|_0^2+2\int_\Omega\left|\int_{t_n}^{t} U'^2\right|\left|\int_{t_n}^{t} 1\right|  \\
 & \leq 2\|{U(t_n) - \overline U}\|_0^2+16 \int_\Omega \left|\int_{t_{n}}^{t} U'^2 \right| 
\leq 2\|{U(t_n) - \overline U}\|_0^2+16 \int_{t_{n}}^{t_{n+1}} \|{U'}\|_0^2 \, .
\end{empheq}
Given $\varepsilon>0$, we can take $n_0>0$ such that
\begin{empheq}{align}
\|{U(t_n) - \overline U}\|_0^2 \leq \frac\varepsilon4, \quad \int_{t_{n}}^{t_{n+1}} \|{U'}\|_0^2\leq \frac\varepsilon{32}\ \ \forall \, n \geq n_0,
\end{empheq}
and the convergence follows from \eqref{coisa2}.

Since $U(t) \to \overline U$ in $L^2(\Omega)$ and $U$ is bounded in $H^{2}_*(\Omega)$, by standard interpolation, we infer that $U(t) \to \overline U$ in $H^{1}_*(\Omega)$ as $t\to\infty$. By \eqref{pfmenospf} and the strong convergence in $H^{1}_*(\Omega)$, we can write
$$
\frac12 \|{U}\|_2^2 -\frac12 \|{\overline U}\|_2^2 = {\mathcal P}_{F} (U) - {\mathcal P}_{F} (\overline U)
+\frac P2(\|{U_x}\|_0^2 -\|{\overline U_x}\|_0^2)
- \frac S4( \|{U_x}\|_0^4 - \|{\overline U_x}\|_0^4)\to 0, \quad t\to\infty\, ,
$$
that is,
\begin{empheq}{align}\label{normasconvergindo}
\|{U(t)}\|_2 \to \|{\overline U}\|_2, \quad \text{as} \quad t\to\infty\,.
\end{empheq}

Let $(r_n)$ be any sequence of positive number such that $r_n \to \infty$.  Given any subsequence $(r_{n_k}) = (s_k)$, since $(U(s_k))$ is bounded in $H^{2}_*(\Omega)$, and $U(s_k)\to \overline U$ in $H^{1}_*(\Omega)$, we have for a further subsequence, denoted by $s_{k_{j}} = \tau_j$,
\begin{empheq}{align}
U(\tau_j)\rightharpoonup \overline U, \quad \|{U(\tau_j)}\|_2 \to \|{\overline U}\|_2,
\end{empheq}
which in turn implies the strong convergence $U(\tau_j) \to \overline U$ in $H^{2}_*(\Omega)$. This shows that indeed $U(r_n) \to \overline U$ in $H^{2}_*(\Omega)$ as $n \to \infty$ and since the sequence $(r_n)$ was arbitrary, we have proved that
\begin{empheq}{align}
U(t) \to \overline U \ \ \text{in} \ \ H^{2}_*(\Omega) \ \ \text{as} \ \ t\to\infty\, .
\end{empheq}
\end{proof}

\begin{proof}[\textbf{Proof of Theorem \ref{thmcoercive}}]
Since $0\le P \le \Lambda_1$, the solution of $\eqref{stationary}$ is unique and realizes the minimum of ${\mathcal P}_{F}$ over $H^{2}_*(\Omega)$. Therefore \eqref{condextra} is satisfied and the proof follows from Lemma \ref{convergencelemma}.
\end{proof}

\begin{proof}[\textbf{Proof of Theorem \ref{oculos}}]
Let $\delta>0$, $S>0$,
$U_0\in H^{2}_*(\Omega)$, $V_0\in L^2(\Omega)$, $\Lambda_1<P\le\Lambda_2$ {}and $U$ be the solution of \eqref{enlm} with $F\equiv0$ and assume that ${\mathcal E}(U; 0) <0$. Lemma \ref{convergencelemma} guarantees the existence of $\overline U\in H^{2}_*(\Omega)$ a solution to \eqref{stationary} and an increasing sequence $(t_n)$, with $t_n \to \infty$, such that
\begin{empheq}{align}\label{resultadofraco++}
U(t_n) \to \overline U\text{ in }H^{2}_*(\Omega), \quad U'(t_n)\to 0\text{ in }L^2(\Omega), \quad \quad n\to\infty \, .
\end{empheq}
Then, by \eqref{energyid-fixedf} with $F \equiv 0$, we obtain that ${\mathcal P}_0(\overline U) \leq   {\mathcal E}(U;0)<0$. Hence, $\overline U$ is nontrivial. From \cite[Theorem 7]{2014al-gwaizNATMA} we know that the nontrivial solutions for \eqref{stationary} when the parameter $P$ is restricted to $(\Lambda_1, \Lambda_2]$ are $\pm\lambda_+ {}w_1{}$ and we infer that
\begin{empheq}{align}
{\mathcal P}_{0}(\overline U)=\inf_{w \in H^{2}_*(\Omega)} {\mathcal P}_{0}(w) \, ,
\end{empheq}
and hence that \eqref{condextra} is satisfied. Then, from \eqref{resultadoforte},
\begin{empheq}{align}
U(t) \to \overline U\text{ in }H^{2}_*(\Omega), \quad U'(t)\to 0\text{ in }L^2(\Omega), \quad \quad t\to\infty \, .
\end{empheq}
So, the solution $U$ induces an orbit $(U(t), U'(t))$ in $H^{2}_*(\Omega)\times L^2(\Omega)$ that starts at $(U_0,V_0)$, tends to the equilibrium point $(\overline{U},0)$ and the energy $\mathcal{E}$ is negative along this orbit. On the other hand, $\mathcal{E} \geq 0$ on $[{}w_1{}]^{\perp} \times L^2(\Omega)$, which implies that $\|{U(t)-\lambda_+ {}w_1{}}\|_2\neq\|{U(t)+\lambda_+ {}w_1{}}\|_2$ for all $t \geq 0$.  Therefore, if $\|{U_0-\lambda_+ {}w_1{}}\|_2^2<\|{U_0+\lambda_+ {}w_1{}}\|_2^2$ then this inequality remains true throughout the orbit and $\overline U = \lambda_+ {}w_1{}$, while if $\|{U_0-\lambda_+ {}w_1{}}\|_2^2> \|{U_0+\lambda_+ {}w_1{}}\|_2^2$ then we infer that $\overline U = -\lambda_+ {}w_1{}$.
\end{proof}

\begin{proof}[\textbf{Proof of Theorem \ref{thm-vkperp}}]
Let $U$ be a weak solution of \eqref{enlm}, $v\in H^{2}_*(\Omega)$ and set
\begin{empheq}{align}
\psi_v (t) = \langle{U(t),V}\rangle \, .
\end{empheq}
So, by the regularity of $U$, $\psi_v'$ and $\psi_v''$ are well defined,  and
\begin{empheq}{align}\psi_v'=\langle{U',V}\rangle \, , \quad \psi_v''= {\langle U'',V \rangle} \, ,\end{empheq}
the latter being perfect for plugging into \eqref{weakform}. We intend to derive a second order differential equation for $\psi_v$, so we must find meaning for the remaining terms. By Riesz representation Theorem, the action of $U(t)$ as an element of $\mathcal{H}$ is
\begin{empheq}{align}\langle{U,w}\rangle = ({U,w})_2, \quad \forall \, w\in H^{2}_*(\Omega)\, .\end{empheq}

If $v = V_k$ is an eigenfunction associated to $\Lambda_k$, and writing for short $\psi_k  = \psi_{V_k}$, then
\begin{empheq}{align}\label{contaspsik}
{\langle U'',V \rangle} + \delta ({U',V})_0 + (U,V)_2 +\phi(U) (U_x,V_x)_0&
=\psi_k''+ \delta({U',V_k})_0 +({U,V_k})_2\left( 1+\frac{\phi(U)}{\Lambda_k}\right)\\
& =\psi_k''+ \frac\delta {\lambda_k}\psi'_k +\left( 1+\frac{\phi(U)}{\Lambda_k}\right)\psi_k \, .
\end{empheq}
Indeed, we can write
\begin{empheq}{align}
\delta({U',V_k})_0 & = \delta \lim_{h \to 0}  \frac{({U(t +h),V_k})_0 - ({U(t),V_k})_0 }{h}
= \frac\delta {\lambda_k} \lim_{h \to 0} \frac{({U(t +h),V_k})_2 - ({U(t),V_k})_2 }{h} \\
& = \frac\delta {\lambda_k} \lim_{h \to 0} \frac{\langle{U(t +h),V_k}\rangle - \langle{U(t),V_k}\rangle }{h} = \frac\delta {\lambda_k}\psi'_k \, .
\end{empheq}
So, with $F \in [V_k]^{\perp}$, the ordinary differential equation for $\psi_{k}$ reads
\begin{empheq}{align}
\psi_k''&+  \frac\delta {\lambda_k}\psi'_k + \left( 1+\frac{\phi(U)}{\Lambda_k}\right)\psi_k= 0 \, .
\end{empheq}

Then observe that the weight $E_k(t) = 1+{\phi(U)} / {\Lambda_k}$ is continuous. Therefore, the initial value problem
\begin{empheq}{align}
\psi''&+  \frac\delta {\lambda_k}\psi' + \left( 1+\frac{\phi(U)}{\Lambda_k}\right)\psi= 0\,, \quad \quad \psi(0) = \psi'(0) =0 \, ,
\end{empheq}
has a unique solution, the zero solution. We conclude that whenever $F \in [V_k]^{\perp} \subset L^2(\Omega)$, $U_0 \in [V_k]^{\perp} \subset H^{2}_*(\Omega)$, $V_0 \in [V_k]^{\perp} \subset L^2(\Omega)$, then $U(t) \in [V_k]^{\perp} \subset H^{2}_*(\Omega)$ for all $t$.
\end{proof}

\medskip
\begin{proof}[\textbf{Proof of Corollary \ref{th:perpv1}}]

It is a straightforward consequence of Lemma \ref{convergencelemma} ii) and Theorem \ref{thm-vkperp}.
Indeed, by Theorem \ref{thm-vkperp}, $U(t) \in [{}w_1{}]^{\perp}$ for all $t \geq 0$ and so
\begin{empheq}{align}
{\mathcal P}_{F}(U;t) \geq 0  = {\mathcal P}_{F}(0) \ \ \forall \, t \geq 0.
\end{empheq}
Since $\Lambda_1 < P \le\Lambda_2$, $\overline U = 0$ is the unique solution of \eqref{stationary} in $[{}w_1{}]^{\perp}$.
\end{proof}

\section{Proof of stability/instability of modes}\label{section:stability}

Throughout this section we consider the  problem \eqref{enlm} under dynamical equilibrium, i.e. $F\equiv 0$, $\delta = 0$. First we will show that, fixing $m$ and $k$, the energy defines, up to translation in time, a unique simple mode.

\begin{proposition}\label{carac-simple-modes} Let $\delta =0$ and $P\leq{\Lambda_{m,i}}$. The family of $(m,i)$-simple  modes is parametrized, up to translation in time, by the energy
 and the simple modes are time-periodic.
\end{proposition}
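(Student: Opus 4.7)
The strategy is to reduce everything to a standard one-degree-of-freedom conservative ODE in the scalar coordinate $\varphi$, and then apply classical phase-plane arguments. First, specializing Proposition~\ref{twocomponents} to the trivial second component (or redoing the computation directly from \eqref{enlm} with $F\equiv0$, $\delta=0$, and $U=\varphi(t)w_{mi}$, using $(w_{mi})_{xx}=-m^2w_{mi}$ from \eqref{thismeans} and the normalization $\|(w_{mi})_x\|_0^2=m^2$ from \eqref{normltwowmk}) one obtains
\begin{equation}\label{ode-simple}
\varphi''(t)+m^2(\Lambda_{m,i}-P)\varphi(t)+Sm^4\varphi(t)^3=0\, .
\end{equation}

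Next, I would observe that \eqref{ode-simple} is Hamiltonian with conserved energy
\begin{equation}\label{ham-simple}
H(\varphi,\varphi')=\tfrac12\varphi'^2+V(\varphi),\qquad V(\varphi)=\tfrac{m^2(\Lambda_{m,i}-P)}{2}\varphi^2+\tfrac{Sm^4}{4}\varphi^4,
\end{equation}
which, up to the factor coming from $\|w_{mi}\|_0^2=1$, coincides with the total mechanical energy of the full PDE along the simple mode. The hypothesis $P\le\Lambda_{m,i}$ combined with $S>0$ makes $V$ an even, strictly convex, coercive single-well potential with $V(0)=0$ and $V(\varphi)\to\infty$ as $|\varphi|\to\infty$. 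Hence for every $E>0$ the level set $\{H=E\}$ is a simple closed curve in the $(\varphi,\varphi')$-plane symmetric about both axes, and for $E=0$ it reduces to the fixed point $(0,0)$.

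From this it is standard to deduce periodicity: since trajectories of the autonomous system \eqref{ode-simple} lie on level sets of $H$ and the nontrivial level sets are compact smooth closed curves containing no equilibria (the only equilibrium being $(0,0)$ on the level $E=0$), each nontrivial orbit is traversed in finite time and the period is given explicitly by
\begin{equation}
T(E)=4\int_0^{a_E}\frac{d\varphi}{\sqrt{2(E-V(\varphi))}},\qquad V(a_E)=E.
\end{equation}
For the parametrization statement, fix $E\ge0$ and let $\varphi_1,\varphi_2$ be two solutions of \eqref{ode-simple} with $H(\varphi_j,\varphi_j')\equiv E$. Their phase-plane trajectories coincide with the single closed orbit $\{H=E\}$ (or the point $(0,0)$ if $E=0$). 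Choose $t_0$ so that $(\varphi_2(0),\varphi_2'(0))=(\varphi_1(t_0),\varphi_1'(t_0))$; by uniqueness for the Cauchy problem of the smooth ODE \eqref{ode-simple}, $\varphi_2(t)=\varphi_1(t+t_0)$ for every $t$, which is exactly the claimed parametrization by energy modulo time translation.

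The only substantive point, beyond the straightforward ODE reduction, is ensuring that the nontrivial energy level sets are actually single closed orbits; this is where the assumption $P\le\Lambda_{m,i}$ enters, forcing the potential $V$ to be a convex single well rather than a double well that would produce two disjoint orbits on low energy levels. I expect no serious obstacles: once the reduction to \eqref{ode-simple} is written down, the rest is textbook phase-plane analysis for a one-dimensional conservative system.
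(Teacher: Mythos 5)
Your proposal is correct and follows essentially the same route as the paper: reduce to the scalar conservative ODE $\varphi''+m^2(\Lambda_{m,i}-P)\varphi+Sm^4\varphi^3=0$ and obtain periodicity and the parametrization by energy from conservation of the Hamiltonian together with uniqueness for the Cauchy problem (the paper packages this as a general lemma on $w''+h(w)=0$ with $h(s)s>0$ and coercive primitive, proved by analyzing sign changes and zeros rather than compact phase-plane level curves, but the substance is identical). One cosmetic caveat: when $P=\Lambda_{m,i}$ the potential $V$ is convex but not strictly convex at the origin since $V''(0)=0$; what your argument actually uses --- that $V$ is an even, coercive single well whose only critical point is $0$ --- still holds in that borderline case, so nothing breaks.
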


Indeed, Proposition \ref{carac-simple-modes} holds in a more general setting.
Consider the ODE
\begin{empheq}[left=\empheqlbrace]{align}
w''+h(w) =0,\label{simpleode}\\
w(0)=a, \quad w'(0)=b,
\end{empheq}
where $h$ and its primitive $H(s) = \int_0^s h(\sigma)d\sigma$ satisfy
\begin{equation}\label{ff}
h(s)s>0, \quad s\neq 0,\qquad H(s) \to \infty, \quad |s|\to\infty.
\end{equation}

\begin{proposition}\label{lemma-general-periodic}
If $h$ is a locally Lipschitz function satisfying \eqref{ff}, then for all $(a,b)\in \mathbb{R}^2\backslash\{(0,0)\}$, the solution $w$ of \eqref{simpleode} is periodic and sign changing. Moreover, if two solutions have the same (conserved) energy
\begin{empheq}{align}
E(w,w')= \frac12w'^2 + H(w),
\end{empheq}
then, up to translation with respect to the variable $t$, they coincide. In addition, if $h$ is odd, then so are all of the solutions, with respect to its zeros.

\end{proposition}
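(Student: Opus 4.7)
The plan is to do a standard phase-plane analysis, exploiting the conservation of energy \(E(w,w') = \tfrac12 (w')^2 + H(w)\).

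First I would establish the qualitative picture of \(H\). From $h(s)s>0$ for $s \ne 0$ and continuity we get $h(0)=0$, so $H(0)=0$, $H$ is strictly decreasing on $(-\infty,0)$ and strictly increasing on $(0,\infty)$, hence $H(s)>0$ for $s\ne 0$. Combined with the coercivity $H(s)\to\infty$ as $|s|\to\infty$, for any data $(a,b)\ne (0,0)$ the initial energy is $E_0 := \tfrac12 b^2 + H(a) > 0$, and there exist unique turning points $\alpha<0<\beta$ characterized by $H(\alpha)=H(\beta)=E_0$. Since $|w(t)|\le \max\{-\alpha,\beta\}$ along the flow, the local Lipschitz solution extends to all $t\in\mathbb{R}$.

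Next, periodicity. From $\tfrac12 (w')^2 = E_0 - H(w)$, whenever $w' > 0$ we have $w'(t) = \sqrt{2(E_0-H(w))}$, and by separation of variables the time to traverse from $\alpha$ to $\beta$ equals
\begin{equation}
T_1 \;=\; \int_\alpha^\beta \frac{ds}{\sqrt{2\,(E_0-H(s))}}.
\end{equation}
The key point is convergence at the endpoints: since $h(\beta)>0$ and $h(\alpha)<0$, Taylor expansion gives $E_0 - H(s) = h(\beta)(\beta-s) + o(\beta-s)$ as $s\uparrow \beta$ (and analogously at $\alpha$), so the integrand has an integrable $(\beta-s)^{-1/2}$ singularity. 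Once the motion reaches $\beta$ at some time $t_1$, we have $w(t_1)=\beta$, $w'(t_1)=0$, and by uniqueness applied to the reversed solution $\widetilde w(t) := w(2t_1 - t)$ (which satisfies the same equation with the same data at $t_1$) the trajectory retraces itself: $w(t_1+s) = w(t_1 - s)$. Iterating this reflection at $\alpha$ and $\beta$ produces a periodic solution of period $T = 2T_1$, which visits $\alpha<0$ and $\beta>0$ and hence is sign-changing.

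For the uniqueness up to translation, let $w_1, w_2$ be two solutions with energy $E_0$. Their orbits lie on the same level curve $\Gamma_{E_0} := \{\tfrac12 v^2 + H(w) = E_0\}$, which by the above is a closed Jordan curve enclosing the origin in the $(w,w')$-plane. Both orbits traverse $\Gamma_{E_0}$ in the same direction (since the vector field $(w',-h(w))$ is unique on $\Gamma_{E_0}$), so there exists $\tau\in\mathbb{R}$ with $(w_1(\tau),w_1'(\tau)) = (w_2(0),w_2'(0))$; uniqueness for the IVP then yields $w_1(t+\tau)=w_2(t)$ for all $t$.

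Finally, the symmetry assertion. Assume $h$ is odd and $w(t_0)=0$. Define $\widetilde w(t) := -w(2t_0 - t)$. Then $\widetilde w(t_0) = 0 = w(t_0)$ and $\widetilde w'(t_0) = w'(t_0)$, while $\widetilde w''(t) + h(\widetilde w(t)) = -w''(2t_0-t) + h(-w(2t_0 - t)) = h(w(2t_0-t)) - h(w(2t_0-t)) = 0$. By uniqueness $\widetilde w = w$, i.e.\ $w(t_0+s) = -w(t_0-s)$ for all $s$, which is the claimed oddness about the zero $t_0$. The only delicate point in the whole argument is the integrability of the period integral at the turning points, handled by the nondegeneracy $h(\alpha)\ne 0 \ne h(\beta)$ which follows from $\alpha\beta\ne 0$.
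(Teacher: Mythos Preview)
Your argument is correct, but it takes a different route from the paper's proof. The paper first shows that $w$ cannot be eventually of one sign by a direct contradiction argument (concavity of $w$ on intervals where $w>0$ forces $w'$ to become negative or $w$ to converge to a positive limit, both impossible), and then obtains periodicity by matching initial data at two consecutive zeros of $w$ with the same sign of $w'$, using only energy conservation at those zeros. You instead organize the proof around the turning points $\alpha<0<\beta$ of $H$: you show the half-period integral $\int_\alpha^\beta ds/\sqrt{2(E_0-H(s))}$ is finite via the nondegeneracy $h(\alpha)\ne 0\ne h(\beta)$, and deduce periodicity from the time-reversal symmetry $w(t_1+s)=w(t_1-s)$ at each turning point; sign-changing then drops out because the orbit visits both $\alpha$ and $\beta$. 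For the uniqueness-up-to-translation the paper compares two solutions at zeros where $w'>0$, while you argue that both orbits trace out the same closed level curve $\{\tfrac12 v^2+H(w)=E_0\}$ in the phase plane. Your approach is more quantitative (it produces an explicit period formula and a full phase portrait), whereas the paper's argument is somewhat more elementary in that it never needs to check integrability at the endpoints. The symmetry assertion when $h$ is odd is proved identically in both.
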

\begin{proof}
Let $w$ be a solution of \eqref{simpleode} with initial data $(a,b)\neq(0,0)$. Since \eqref{simpleode} is a conservative equation, \eqref{ff} ensures that all solutions are bounded, and hence globally defined.

\vspace{10pt}

\noindent {\em Claim:}  $w$ is not eventually of one sign ($w\geq0$ or $w \leq0$ for all $t\geq t_0$).

\noindent Assume by contradiction that there exists $t_0>0$ such that $w\geq0$ on $[t_0,\infty)$. Then, since $w\not\equiv 0$, $w(t_0)>0$ or $w'(t_0)>0$. In any case, we infer that $w$ is positive in a neighborhood on the right of $t_0$. In addition, from the equation \eqref{simpleode} and \eqref{ff}, $w$ is concave and $w'$ is nonincreasing in this interval. If $w'(t_1)<0$ for some $t_1> t_0$, then we readily have
\begin{empheq}{align}
w(t) \leq w(t_1) +w'(t_1)(t-t_1), \quad t>t_1,
\end{empheq}
and $w$ changes sign in $[t_0,\infty)$, a contradiction. Assume then $w'\geq0$ in $(t_0,\infty)$. As $w$ remains bounded and concave, it follows that
\begin{empheq}{align}\label{Lcontradiciton}w(t) \to L>0, \quad w'(t)\to 0, \quad w''(t)\to -h(L) <0, \quad t\to\infty.\end{empheq}
Hence, from the mean value Theorem, as $w'\to0$,  we can build a sequence $(t_n)$ such that
\begin{empheq}{align}t_n\to\infty, \quad w''(t_n) \to 0,\end{empheq}
 which contradicts \eqref{Lcontradiciton}. Similarly, we can prove that $w$ is not eventually nonpositive.

\vspace{10pt}

\noindent {\em Claim:} $w$ is periodic.

\noindent Let $(\tau_n)$ be the sequence of zeros of $w$ in $[0,\infty)$. Then $w$ has opposite signs in $(\tau_1,\tau_2)$ and $(\tau_2,\tau_3)$ and $w(\tau_1)=w(\tau_3)=0$.  From the energy conservation
\begin{empheq}{align}(w'(\tau_1))^2=2 E(0,w'(\tau_1)) = 2 E(0,w'(\tau_3)) = (w'(\tau_3))^2.\end{empheq}
Therefore $w'(\tau_3) = w'(\tau_1)$ and hence $w$ is $T$-periodic with $T=\tau_3-\tau_1$.

\vspace{10pt}

\noindent {\em Claim:} The energy determines a unique solution, up to translation.

\noindent Let $(a,b),(c,d)\in\mathbb{R}^2$ be initial data such that
\begin{empheq}{align}\frac{b^2}{2}+ H(a)=E(a,b) = E(c,d) = \frac{d^2}{2}+ H(b).\end{empheq}
Denote by $w(a,b,\cdot)$ and $w(c,d, \cdot)$ the corresponding solutions. Let $\tau(a,b), \tau(c,d) \in {\mathbb{R}}$ such that
\begin{empheq}{align}
w(a,b,\tau(a,b))= 0, \,  w'(a,b,\tau(a,b))>0, \, w(c,d,\tau(c,d))= 0, \,  w'(c,d,\tau(c,d))>0.
\end{empheq}
Since
\begin{empheq}{align}
w'(a,b,\tau(a,b))^2 =& 2E(0,w'(a,b,\tau(a,b))) \\
=& 2E(a,b) = 2E(c,d)= 2E(0,w'(c,d, \tau(c,d)))=w'(c,d,\tau(c,d))^2,\end{empheq}
we infer that $w'(a,b,\tau(a,b)) = w'(c,d,\tau(c,d))$, and hence $w(a,b,t) = w(c,d,R+ t)$ for all $t$, where  $R=\tau(c,d)-\tau(a,b)$.

\vspace{10pt}

\noindent {\em Claim:} If $h$ is odd, then so are all of the solutions, with respect to its zeros.

\noindent Let $\tau$ be a zero of $w$ and set $w_1(t) = w(t + \tau)$, $w_{2}(t) = - w(\tau -t)$. Then observe that $w_1$ and $w_2$ solves
\begin{empheq}{align}
z'' + h(z) = 0, \quad z(0) =0, \quad z'(0) = w'(\tau).
\end{empheq}
By uniqueness, we infer that $w_1 =w_2$ and hence that $w$ is odd with respect to $\tau$.
\end{proof}

\begin{proof}[\textbf{Proof of Proposition {\rm{\ref{carac-simple-modes}}}}]
It is a consequence of Proposition \ref{lemma-general-periodic}
with
\begin{empheq}{align}h_{m,i,P,S}(s) =m^2[(\Lambda_{m,i}  -P) s+ Sm^2 s^3]. \end{empheq}
Observe that {$S>0$ is sufficient for \eqref{ff}, whereas }the condition $P\leq \Lambda_{m,i}$ guarantees \eqref{ff}.
\end{proof}

\begin{proof}[\textbf{Proof of Theorem {\rm{\ref{th:newstability}}}}]
Here we follow closely the ideas in \cite[Theorem 1.1]{1996cazenaveQAM}, with some slightly different arguments because system \eqref{systemwz} has three parameters $\mu, \gamma, \nu$ and the corresponding system in \cite{{1996cazenaveQAM}} has only two.

With the change of unknowns $\varphi\mapsto m\varphi$ and $\psi\mapsto n\psi$ the system \eqref{systemODE} reads
$$
\left\{\begin{array}{l}
\varphi''(t)+m^2(\Lambda_{m,i}-P)\varphi(t)+Sm^2\big(\varphi(t)^2+\psi(t)^2\big)\varphi(t)=0\, ,\vspace{5pt}\\
\psi''(t)+n^2(\Lambda_{n,k}-P)\psi(t)+Sn^2\big(\varphi(t)^2+\psi(t)^2\big)\psi(t)=0\ .
\end{array}\right.
$$
Then the substitutions $\varphi(t)\mapsto \varphi(\frac{t}{m\sqrt{S}})$ and $\psi(t)\mapsto\psi(\frac{t}{m\sqrt{S}})$ lead to
$$
\left\{ \begin{array}{l}
\varphi''(t)+\frac{\Lambda_{m,i}-P}{S}\varphi(t)+(\varphi(t)^2+\psi(t)^2)\varphi(t)=0,\vspace{5pt}\\
\psi''(t)+ \frac{n^2}{m^2}\frac{\Lambda_{n,k}-P}{S}\psi(t)+\frac{n^2}{m^2}(\varphi(t)^2+\psi(t)^2)\psi(t)=0.
\end{array}\right.
$$

Setting $\mu=\frac{\Lambda_{m,i}-P}{S}$, $\nu=\frac{\Lambda_{n,k}-P}{S}$, $\gamma=\frac{n^2}{m^2}$, we obtain
\begin{equation}\label{Ham}
\left\{\begin{array}{l}
\varphi''(t)+(\mu+\varphi(t)^2+\psi(t)^2)\varphi(t)=0, \vspace{5pt}\\
\psi''(t)+\gamma(\nu+\varphi(t)^2+\psi(t)^2)\psi(t)=0,
\end{array}\right.
\end{equation}
which resembles \cite[(1.4)]{1996cazenaveQAM}; observe that the condition $P< \min\{\Lambda_{m,i},  \Lambda_{n,k}\}$ guarantees $\mu> 0$ and $\nu>0$.

Fix an energy level $E_0>0$, $m,n,i,k\in{\mathbb{N}}$ and
set $b_* = \sqrt{2E_0}$.
Consider the linear operator $L: {\mathbb{R}}^2 \to {\mathbb{R}}^2$ defined by $L(c,d) = -(z(\theta), z'(\theta))$, where $\theta$ is the first zero of $w$ and
\begin{empheq}[left=\empheqlbrace]{align}\label{systemwz}
w''+  \mu w +w^3=0,\\
z''+ \gamma(\nu +w^2) z=0,\label{hill-nova},
\end{empheq}
with $w(0)=0,\ w'(0)=b_*, \ z(0)=c,\ z'(0)=d.$ Observe that systems \eqref{systemwz}-\eqref{hill-nova} and \eqref{Ham} are the same and that $w$ is the coordinate of an $(m,i)$-simple  mode with energy $E_0$. Therefore
\begin{equation}\label{estable-equiv}
\text{\emph{the proof is reduced to the study of stability of $z=0$ in equation \eqref{hill-nova}}}.
\end{equation}
By Floquet theory, this is equivalent to study the eigenvalues of operator $L$; see \cite[Theorem 2.89, p. 194]{chicone}.

Problem \eqref{systemwz} can be seen as a
regular perturbation of
the system
\begin{empheq}[left=\empheqlbrace]{align}\label{systemwzlimit}
\vartheta''+  \vartheta^3=0,\\
\zeta''+ \gamma\vartheta^2 \zeta=0,
\end{empheq}
with $\vartheta(0)=0,\ \vartheta'(0)=1,\ \zeta(0)=\bar c,\ \zeta'(0)=\bar d,$ for some $\bar c, \bar d \in {\mathbb{R}}$.
Indeed, consider the problem
\begin{empheq}[left=\empheqlbrace]{align}
w_\varepsilon''+  \varepsilon w_\varepsilon +w_\varepsilon^3=0, \label{systemwzepsilon}\\
z_\varepsilon''+ \gamma(\nu \varepsilon/\mu +w_\varepsilon^2) z_\varepsilon=0,
\end{empheq}
with $w_\varepsilon(0)=0,\ w_\varepsilon'(0)=1,\ z_\varepsilon(0)=\bar c,\ z_\varepsilon'(0)=\bar d$, which is clearly a regular perturbation of \eqref{systemwzlimit} as $\varepsilon\to 0$. On the other hand, with
\begin{equation}\label{smallepsilon}
\varepsilon = \frac{\mu}{b_*} = \frac{\mu}{\sqrt{2 E_0}},
\end{equation}
 the functions
\begin{empheq}{align}
w(t) = \sqrt{\mu/\varepsilon} \,  w_{\varepsilon}(t\sqrt{\mu/\varepsilon}), \quad z(t) = z_{\varepsilon}(t\sqrt{\mu/\varepsilon}),
\end{empheq}
solve system \eqref{systemwz} provided we choose $(\bar c, \bar d) = \left(c,  d \sqrt{\varepsilon/\mu} \right)$.

In analogy with the operator $L$, consider the linear operator $B_\gamma$ associated to \eqref{systemwzlimit} defined by $B_\gamma(\bar c,\bar d) = -(\zeta(\tau), \zeta'(\tau))$, where $\tau$ is the first positive zero of $\vartheta(c,d)$. Analogously, define the operator $B_{\gamma,\varepsilon}$ associated to \eqref{systemwzepsilon}.

Cazenave and Weissler \cite{1996cazenaveQAM} studied in details the properties of the operator $B_{\gamma}$ showing that the stability properties of the system \eqref{systemwzlimit} depend on the value of  $\gamma$. From \cite[Theorem 3.1 (iii)]{1996cazenaveQAM}, if $\gamma \in I_j$ for some $j\in{\mathbb{N}}$, then the eigenvalues  $\lambda_1, \lambda_2$ of the operator $B_\gamma$ are complex and $|\lambda_1|=|\lambda_2|=1$.
This property is shared by the eigenvalues of $B_{\gamma,\varepsilon}$ for small enough $\varepsilon$, that is,
\begin{empheq}{align}\label{complex-eigenvalues}
|\lambda_{1,\varepsilon}|=|\lambda_{2,\varepsilon}|=1;
\end{empheq}
see \cite[Lemma 3.3 (i)]{1996cazenaveQAM}.

On the other hand, according to \cite[Theorem 3.1 (i)-(ii)]{1996cazenaveQAM}, if $\gamma \in K_j$ for some $j\in{\mathbb{N}}$, then the eigenvalues  $\lambda_1, \lambda_2$ of the operator $B_\gamma$ have the form $\lambda_2 = \lambda_1^{-1}$, $0<|\lambda_1|<1<|\lambda_2|=|\lambda_1|^{-1}$.
By regular perturbation, for sufficiently small $\varepsilon_1>0$ and
for all $0<\varepsilon\leq \varepsilon_1$, the eigenvalues $\lambda_{1,\varepsilon}, \lambda_{2,\varepsilon}$ of $B_{\gamma,\varepsilon}$ satisfy
\begin{empheq}{align}\label{real-eigenvalues}
0<|\lambda_{1, \varepsilon}| < 1 < |\lambda_{2,\varepsilon}| \,.
\end{empheq}

Now recall that $B_{\gamma,\varepsilon} = L$ for $\varepsilon$ as in \eqref{smallepsilon}. The condition of small $\varepsilon$ is translated by \eqref{smallepsilon} as a requirement of large energy $E_0$ for system \eqref{Ham}.
Once the conserved energy of systems \eqref{systemODE} is the energy of \eqref{Ham} times the factor $S$, this is equivalent to ask large energy for the first.

If $\gamma \in I_j$ and the energy condition is fulfilled, then by \eqref{complex-eigenvalues} the $(m,i)$-simple  mode is $(n,k)$ linearly stable. On the other hand, if $\gamma \in K_j$ and the energy is large enough, \eqref{real-eigenvalues} implies that the $(m,i)$-simple  mode is $(n,k)$ linearly unstable.
\end{proof}

\begin{proof}[\textbf{Proof of Theorem {\rm{\ref{stable}}}}]
System \eqref{Ham} with initial data $\varphi(0)=\alpha>0$ and $\varphi'(0)=\psi(0)=\psi'(0)=0$ has the solution
$(\varphi,\psi)=(\varphi_\alpha,0)$ where $\varphi_\alpha$ is the solution of the Duffing equation
\begin{equation}\label{duffing}
\varphi_\alpha''(t)+\mu\varphi_\alpha(t)+\varphi_\alpha(t)^3=0,\quad\varphi_\alpha(0)=\alpha,\ \varphi_\alpha'(0)=0.
\end{equation}
Equation \eqref{duffing} has the conserved energy
$$E=\frac{(\varphi_\alpha')^2}{2}+\mu\frac{\varphi_\alpha^2}{2}+\frac{\varphi_\alpha^4}{4}\equiv\mu\frac{\alpha^2}{2}+\frac{\alpha^4}{4} \, .$$
From this, we infer that
\begin{equation}\label{ancoraenergia}
2(\varphi_\alpha')^2=(\varphi_\alpha^2+\Theta_+)(\Theta_--\varphi_\alpha^2)\qquad(\mbox{where }\Theta_\pm=\sqrt{\mu^2+4E}\pm\mu);
\end{equation}
we did not emphasize here the dependence of $\Theta_\pm$ on the energy $E$. From \eqref{ancoraenergia} we deduce that
\begin{equation}\label{alpha}
\alpha=\|\varphi_\alpha\|_\infty=\sqrt{\Theta_-}\, ,\qquad-\sqrt{\Theta_-}\le\varphi_\alpha(t)\le\sqrt{\Theta_-}\quad\forall t
\end{equation}
and $\varphi_\alpha$ oscillates in this range. If $\varphi_\alpha(t)$ solves \eqref{duffing}, then also $\varphi_\alpha(-t)$ solves the same problem: this shows that
the period $T=T(E)$ of the solution $\varphi_\alpha$ of \eqref{duffing} is the double of the length of an interval of monotonicity for $\varphi_\alpha$.
Then we have that $\varphi_\alpha(T/2)=-\alpha=-\sqrt{\Theta_-}$ and $\varphi_\alpha'(T/2)=0$, while $\varphi_\alpha'<0$ in $(0,T/2)$. By rewriting \eqref{ancoraenergia} as
$$
-\sqrt2 \varphi_\alpha'=\sqrt{(\varphi_\alpha^2+\Theta_+)(\Theta_--\varphi_\alpha^2)}\qquad\forall \,t\in\left(0,T/2\right)\, ,
$$
by separating variables, and upon integration over the time interval $(0,T/2)$ we obtain
$$\frac{T(E)}{2}=\sqrt2 \int_{-\sqrt{\Theta_-}}^{\sqrt{\Theta_-}}\frac{d\phi}{\sqrt{(\phi^2+\Theta_+)(\Theta_--\phi^2)}}\,.$$
Then, using the fact that the integrand is even with respect to $\phi$ and through a change of variables, we obtain
\begin{equation}\label{TE}
T(E)=4\sqrt2 \int_0^1\frac{d\phi}{\sqrt{(\Theta_-\phi^2+\Theta_+)(1-\phi^2)}}\, .
\end{equation}
Both the maps $E\mapsto\Theta_\pm(E)$ are continuous and increasing for $E\in[0,\infty)$: hence, $E\mapsto T(E)$ is strictly decreasing. Moreover,
an asymptotic expansion shows that
\begin{equation}\label{asymptotic}
\Theta_+(E)=2\mu+\tfrac{2}{\mu}\, E+o(E)\ ,\quad \Theta_-(E)=\tfrac{2}{\mu}\, E+o(E)\ ,\qquad\mbox{as }E\to0\, .
\end{equation}
In turn, from \eqref{alpha} and \eqref{asymptotic} we infer that
\begin{equation}\label{asymptotic2}
\|\varphi_\alpha\|_\infty^2=\tfrac{2}{\mu}\, E+o(E)\ ,\qquad\mbox{as }E\to0\, ,
\end{equation}
while from \eqref{TE} and \eqref{asymptotic} we infer that
$$
T(E)=\frac{2\pi}{\sqrt{\mu}}\left(1-\frac{3E}{4\mu^2}\right)+o(E)\ ,\qquad\mbox{as }E\to0
$$
and, finally,
\begin{equation}\label{asymptotic3}
\left(\frac{2\pi}{T(E)}\right)^2=\mu\left(1+\frac{3E}{2\mu^2}\right)+o(E)\ ,\qquad\mbox{as }E\to0\, .
\end{equation}

According to Definition \ref{defstab*} and as we have pointed out in \eqref{estable-equiv}, the simple mode $\varphi(t)w_{mi}(x,y)$ is linearly stable with respect to
$\psi(t)w_{nk}(x,y)$ (see \eqref{systemODE}) if $\xi\equiv0$ is a stable solution of the linear Hill equation
\begin{equation}\label{hill2}
\xi''+a(t)\xi=0\, ,\qquad a(t)=\gamma(\nu+\varphi_\alpha(t)^2)\quad \forall \, t\, .
\end{equation}
Since $T=T(E)$ is the period of $\varphi_\alpha$, the function $a(t)$ in \eqref{hill2} has period $T(E)/2$.
Moreover, by \eqref{asymptotic2}, we know that
\begin{equation}\label{doppiastima}
\gamma\nu\le a(t)\le\gamma(\nu+\tfrac{2}{\mu}\, E)+o(E)\ ,\qquad\forall \, t\ge0\ ,\quad\mbox{as }E\to0\, .
\end{equation}

At this point, we distinguish two cases.

\vspace{10pt}
\noindent {\em Case 1.} $\sqrt{\gamma\nu/\mu}\not\in\mathbb{N}$.\par\noindent
Then we denote by $h$ the integer part of $\sqrt{\gamma\nu/\mu}$ (possibly zero) so that
$$
h^2\mu<\gamma\nu<(h+1)^2\mu\, .
$$
Moreover, by \eqref{doppiastima} we know that $a(t)\to\gamma\nu$ uniformly as $E\to0$, whereas by \eqref{asymptotic3} we know that
$$\left(\frac{2\pi}{T(E)}\right)^2\to\mu\qquad\mbox{as }E\to0\, .$$
By combining these three facts and by continuity, we infer that there exists $\overline{E}>0$ such that, if $E\le\overline{E}$ then
\begin{equation}\label{doppiaa}
h^2\left(\frac{2\pi}{T(E)}\right)^2\le a(t)\le (h+1)^2\left(\frac{2\pi}{T(E)}\right)^2\qquad\forall \, t\, .
\end{equation}
Then a criterion of Zhukovskii \cite{zhk} (see also \cite[Chapter VIII]{yakubovich}) states that $\xi\equiv0$ is stable for \eqref{hill2}.
Hence, $\varphi(t)w_{mi}(x,y)$ is linearly stable with respect to $\psi(t)w_{nk}(x,y)$.

\vspace{10pt}
\noindent
{\em Case 2.} $\sqrt{\gamma\nu/\mu}\in\mathbb{N}$.\par\noindent
Then we denote by $h+1=\sqrt{\gamma\nu/\mu}\in\mathbb{N}$ (zero excluded) so that $\gamma\nu=(h+1)^2\mu$. By uniform convergence and by continuity
the left inequality in \eqref{doppiaa} still holds provided that $E$ is sufficiently small. On the other hand, by \eqref{asymptotic3} and \eqref{doppiastima},
the right inequality in \eqref{doppiaa} holds provided that
$$\gamma(\nu+\tfrac{2}{\mu}\, E)+o(E)\le(h+1)^2\mu\left(1+\frac{3E}{2\mu^2}\right)+o(E)$$
for sufficiently small $E$. Since $\gamma\nu=(h+1)^2\mu$, the latter is certainly satisfied if $4\gamma<3(h+1)^2$, that is, if $\gamma<\frac34$.
Recalling the definition of $\gamma=n^2/m^2$ and assumption \eqref{nm}, we see that this holds since $(h+1)^2\ge1$.
Therefore, also the left inequality in \eqref{doppiaa} holds if $E$ is sufficiently small and we conclude as in Case 1.
\end{proof}

\section*{Acknowledgements}
The Authors are grateful to an anonymous Referee for several remarks which lead to an improvement of the present paper. Moreover, the Referee suggested
the following interesting variant of the equation in \eqref{enlm}:
$$U_{tt}+\delta U_{t}+\Delta ^{2}U-\phi(U) U_{xx}=F+GU_{y}\, .$$
The additional term $GU_y$ well describes the strong wind attacking the free edges of the plate and its effect would be a shift in the torsional
eigenvalues depending on the force $G$ of the wind.\par
V. Ferreira Jr is supported by FAPESP \#2012/23741-3 grant. F. Gazzola is partially supported by the PRIN project Equazioni alle derivate parziali di tipo ellittico e parabolico: aspetti geometrici, disuguaglianze collegate, e applicazioni, and he is member of the Gruppo Nazionale per l' Analisi Matematica, la Probabilit\`a e le loro Applicazioni (GNAMPA) of the Istituto Nazionale di Alta Matematica (INdAM). E. Moreira dos Santos was partially supported by CNPq \#309291/2012-7 and \#307358/2015-1 grants and FAPESP \#2014/03805-2 and \#2015/17096-6 grants.

\bibliographystyle{abbrv}

\end{document}